\documentclass[11pt]{article}
\usepackage[english,activeacute]{babel}
\usepackage[T1]{fontenc}
\usepackage{subfig,psfrag}
\usepackage[percent]{overpic}
\usepackage{graphicx,amsmath,amssymb,amsthm,xcolor,wasysym,bbm,enumitem}
\usepackage{upgreek}
\usepackage[active]{srcltx}
  \definecolor{refkey}{gray}{0.75}
  \colorlet{labelkey}{blue}
\usepackage{upgreek}
\usepackage{bbm}


\newcommand{\mm}{\vec{\mathrm{m}}}

\newcommand{\mmm}{{\mathrm{m}}}
\newcommand{\m}{\vec{m}}
\newcommand{\n}{\vec{n}}
\renewcommand{\b}{\vec{b}}
\newcommand{\nn}{\vec{\mathrm{n}}}
\newcommand{\bb}{\vec{\mathrm{b}}}
\newcommand{\cc}{\mathrm{c}}

\newcommand{\Z}{\mathbb{Z}}

\renewcommand{\S}{\mathbb{S}}

\newcommand{\R}{\mathbb{R}}
\newcommand{\C}{\mathbb{C}}

\newcommand{\N}{\mathbb{N}}

\newcommand{\ptl}{{\partial}}

 \providecommand{\abs}[1]{|#1 |}


 \providecommand{\norm}[1]{\|#1 \|}

\renewcommand{\Re}{\operatorname{Re}}
\renewcommand{\Im}{\operatorname{Im}}

\newcommand{\Erfc}{\operatorname{Erfc}}
\newcommand{\Erf}{\operatorname{Erf}}

\newcommand{\bqq}{\begin{equation*}}
\newcommand{\eqq}{\end{equation*}}
\newcommand{\bq}{\begin{equation}}
\newcommand{\eq}{\end{equation}}

\newtheorem{thm}{Theorem}[section]

\newtheorem{prop}[thm]{Proposition}
\newtheorem{lema}[thm]{Lemma}
\newtheorem{lemma}[thm]{Lemma}

\newtheorem{cor}[thm]{Corollary}
\newtheorem{remark}[thm]{Remark}

  \renewcommand{\S}{\mathbb{S}}

\theoremstyle{definition}

\paperheight=29.7cm
\paperwidth=21cm
\setlength\textwidth{16cm}
\hoffset=-1in
\setlength\marginparsep{0cm}
\setlength\marginparwidth{1cm}
\setlength\marginparpush{0cm}
\setlength\evensidemargin{2.5cm}
\setlength\oddsidemargin{2.5cm}
\setlength\topmargin{2.5cm}
\setlength\headheight{0cm}
\setlength\headsep{0cm}
\voffset=-1in
\setlength\textheight{24cm}
\setlength{\parindent}{0,5cm}
\setlength{\parskip}{1.5mm}

\begin{document}
\title{Self-similar solutions of the one-dimensional Landau--Lifshitz--Gilbert equation}

\author{
\renewcommand{\thefootnote}{\arabic{footnote}} Susana Guti\'errez\footnotemark[1] \ and Andr\'e de Laire\footnotemark[2]}
 \footnotetext[1]{School of Mathematics,
University of Birmingham, Edgbaston, Birmingham, B15 2TT, United
Kingdom. E-mail: {\tt s.gutierrez@bham.ac.uk} }
\footnotetext[2]{Laboratoire Paul Painlev\'e, Universit\'e Lille 1, 59655 Villeneuve d'Ascq Cedex,
France. E-mail: {\tt andre.de-laire@math.univ-lille1.fr} }

\date{}

\maketitle

\begin{abstract}
We consider the one-dimensional Landau--Lifshitz--Gilbert (LLG)
equation, a model des\-cribing the dynamics for the spin in
ferromagnetic materials. Our main aim is the analytical study of the
bi-parametric family of self-similar solutions of this model.
In the presence of damping, our construction provides a family of
global solutions of the LLG equation which are associated to a
discontinuous initial data of infinite (total) energy, and which are
smooth and have finite energy for all positive times. Special
emphasis will be given to the behaviour of this family of solutions
with respect to the Gilbert damping parameter.

We would like to emphasize that our analysis also includes the study of self-similar solutions of the Schr\"odinger map and the heat flow for harmonic maps into the $2$-sphere as special cases. In particular, the results presented here recover some of the previously known
results in the setting of the $1$d-Schr\"odinger map equation.

\noindent{{\em Keywords and phrases:} Landau--Lifshitz--Gilbert
equation, Landau--Lifshitz equation, ferromagnetic spin chain,
Schr\"odinger maps, heat-flow for harmonic maps, self-similar
solutions, asymptotics.
}
\end{abstract}

\tableofcontents

\section{Introduction and statement of results}
 \setcounter{equation}{0}
   \numberwithin{equation}{section}

In this work we consider the one-dimensional
Landau--Lifshitz--Gilbert equation (LLG)
\begin{equation}\label{LL}
\ptl_t \mm= \beta \mm\times \m_{ss} -\alpha \mm \times (\mm\times \mm_{ss}),
\quad s\in \R, \quad t> 0, \tag{LLG}
\end{equation}
where  $\mm=(\mmm_1, \mmm_2, \mmm_3):\R \times (0,\infty)\longrightarrow \S^2$
is the spin vector, $\beta\geq 0$, $\alpha\geq 0$, $\times$ denotes
the usual cross-product in $\R^3$, and $\mathbb{S}^2$ is the unit
sphere in $\mathbb{R}^3$.

Here we have not included the effects of anisotropy or an external
magnetic field. The first term on the right in \eqref{LL} represents
the exchange interaction, while the second one corresponds to the
Gilbert damping term and may be considered as a dissipative term in
the equation of motion. The parameters $\beta\geq 0$ and $\alpha\geq 0$  are the so-called
exchange constant and Gilbert damping coefficient, and take into
account the exchange of energy in the system and the effect of
damping on the spin chain respectively. Note that, by considering
the time-scaling  $\mm(s,t)\to \mm(s,(\alpha^2+\beta^2)^{1/2}t)$, in
what follows we will assume w.l.o.g. that
\bq\label{normalization}
  \alpha, \ \beta\in [0,1] \qquad {\hbox{and}}\qquad \alpha^2+\beta^2=1.
\eq
The Landau--Lifshitz--Gilbert equation was first derived on
phenomenological grounds by L.~Landau and E.~Lifshitz
to describe the dynamics for the magnetization or spin $\mm(s,t)$ in
ferromagnetic materials \cite{landaulifshitz,gilbert}.
The nonlinear evolution equation \eqref{LL} is related to several
physical and mathematical problems and
it has been seen to be a physically relevant model for several
magnetic materials \cite{hubert,kosevich}.  In the setting of the LLG
equation, of particular importance is to consider the effect of
dissipation on the spin \cite{steiner,daniel-lak,daniel-lak1}.

The Landau--Lifshitz family of equations includes as special cases
the well-known heat-flow for harmonic maps and the Schr\"odinger
map equation onto the $2$-sphere. Precisely, when $\beta=0$ (and therefore $\alpha=1$) the LLG
equation reduces
 to the one-dimensional {\it{heat-flow equation for harmonic maps}}
 \begin{equation}\label{HFHM}
   \partial_{t} \mm= - \mm\times(\mm\times \mm_{ss})=\mm_{ss}+|\mm_s|^2 \mm \tag{HFHM}
 \end{equation}
 (notice that $|\mm|^2=1$, and in particular $\mm\cdot
 \mm_{ss}=-|\mm_s|^2$). The opposite limiting case of the LLG equation
 (that is $\alpha=0$, i.e. no dissipation/damping and therefore
 $\beta=1$) corresponds to the {\it{Schr\"odinger map equation onto the sphere}}
 \begin{equation}\label{SM}
   \partial_{t}\mm=\mm\times \mm_{ss}. \tag{SM}
 \end{equation}
Both special cases have been objects of intense research and we refer
the interested reader to \cite{lakshmanan,guo,lin,guan} for surveys.

Of special relevance is the connection of the LLG equation with certain non-linear
Schr\"odinger equations.  This connection is established as follows:
Let us suppose that $\mm$ is the tangent vector of a curve in
$\R^3$, that is $\mm=\vec{X}_s$, for some curve $\vec{X}(s,t)\in
\R^3$ parametrized by the arc-length. It can be shown
\cite{daniel-lak} that if $\mm$ evolves under (\ref{LL}) and we
define the so-called filament function $u$ associated to
$\vec{X}(s,t)$ by
\bq\label{hasimoto}
u(s,t)=\cc(s,t)\displaystyle{e^{i\int_0^s\uptau(\sigma,t)\,d\sigma}}, \eq
in terms of the curvature $\cc$ and torsion $\uptau$ associated to the
curve, then $u$ solves the following non-local non-linear
Schr\"odinger equation with damping
\bq\label{schrodinger}
iu_t+(\beta-i\alpha)u_{ss}+\frac{u}{2}\left(\beta\abs{u}^2+2\alpha\int_0^s
\Im(\bar u u_s)-A(t)\right)=0, \eq where  $A(t)\in\mathbb{R}$
is a time-dependent function defined in terms of the curvature and
torsion and their derivatives at the point $s=0$.
The transformation (\ref{hasimoto}) was first considered in the
undamped case by Hasimoto in \cite{hasimoto}. Notice that if
$\alpha=0$, equation (\ref{schrodinger}) can be transformed into the
well-known completely integrable cubic Schr\"odinger equation.

The main purpose of this paper is the analytical study  of
self-similar solutions of the LLG equation of the form
\bq\label{m-tilde}
\mm(s,t)=\m\left(\frac{s}{\sqrt{t}}\right), \eq for some profile
$\m:\R\to \mathbb{S}^2$, with emphasis on the
behaviour of these solutions with respect to the Gilbert damping
parameter $\alpha\in[0,1]$.

For $\alpha=0$, self-similar solutions have generated considerable
interest
\cite{lakshmanan0,lakshmanan,buttke,vega-gutierrez,delahoz}. We are
not aware of any other study of such solutions for
$\alpha>0$ in the one dimensional case (see \cite{germain-rupflin} for a study of self-similar solutions of the harmonic map flow in higher dimensions). However, Lipniacki~\cite{lipniacki} has studied
self-similar solutions for a related model with nonconstant
arc-length. On the other hand, little is known analytically about the effect of
damping on the evolution of a one-dimensional spin chain. In
particular,  Lakshmanan and Daniel  obtained an explicit solitary
wave solution in \cite{daniel-lak, daniel-lak1} and demonstrated the
damping of the solution in the presence of dissipation in the system. In this setting, we would like to understand how
the dynamics of self-similar solutions to this model is affected by
the introduction of damping in the equations governing the motion of
these curves.

As will be shown in Section~\ref{sec-self-similar} self-similar solutions of
\eqref{LL} of the type \eqref{m-tilde} constitute a bi-parametric
family of solutions $\{\m_{c_0,\alpha}\}_{c_0,\alpha}$ given by
\bq\label{self-similar}
\mm_{c_0,\alpha}(s,t)=\m_{c_0,\alpha}\left(\frac{s}{\sqrt{t}}\right),\qquad
c_0>0, \qquad \alpha\in[0,1],
\eq
where $\m_{c_0,\alpha}$ is the solution of the Serret--Frenet
equations
\begin{equation}\label{serret}
\left\{
\begin{aligned}
 \m'&=c \n,\\
 \n'&=-c\m +\tau \b,\\
 \b'&=-\tau \n,
\end{aligned}
\right.
\end{equation}
with curvature and torsion given respectively by
\bq\label{c-tau}
 c_{c_0,\alpha}(s)=c_0e^{-\frac{\alpha s^2}{4}},
\quad \tau_{c_0,\alpha}(s)=\frac{\beta s}{2}, \eq and initial
conditions \bq\label{IC} {\m}_{c_0,\alpha}(0)=(1,0,0), \quad
{\n}_{c_0,\alpha}(0)=(0,1,0), \quad {\b}_{c_0,\alpha}(0)=(0,0,1).
\eq
The first result of this paper is the following:
\begin{thm}
\label{Theorem1} Let $\alpha\in[0,1]$, $c_0>0$\footnote{The case $c_0=0$ corresponds to the constant solution for \eqref{LL}, that is
 $$
   \mm_{c_0,\alpha}(s,t)=\m\left(\frac{s}{\sqrt{t}}\right)=(1,0,0), \qquad \forall \alpha\in[0,1].
 $$
} and ${\m}_{c_0,\alpha}$ be the solution of the Serret--Frenet
system \eqref{serret} with curvature and torsion given by \eqref{c-tau} and initial conditions \eqref{IC}. Define
$\mm_{c_0,\alpha}(s,t)$ by
$$
   \mm_{c_0, \alpha}(s,t) =\m_{c_0,\alpha}\left( \frac{s}{\sqrt{t}}  \right),   \qquad t>0.
$$
Then,
\begin{enumerate}[label=({\roman*}),ref={\it{({\roman*})}}]
\item\label{regular} The function $\mm_{c_0, \alpha}(\cdot,t)$ is a regular $\mathcal{C}^\infty(\R;\mathbb{S}^2)$-solution of \eqref{LL}  for $t>0$.

\item\label{converge} There exist unitary vectors $\vec{A}^{\pm}_{c_0,\alpha}=(A_{j,c_0,\alpha}^{\pm})_{j=1}^{3} \in \S^2$ such that the following pointwise convergence holds when $t$ goes to zero:
\bq\label{convergencia} \lim_{t\to
 0^+}\mm_{c_0,\alpha}(s,t)=\begin{cases}
 \vec{A}^+_{c_0,\alpha}, &\text{ if }s>0,\\[2ex]
 \vec{A}^-_{c_0,\alpha}, &\text{ if }s<0,
\end{cases}
\eq
where $\vec{A}^-_{c_0,\alpha}=(A_{1,c_0,\alpha}^+,-A_{2,c_0,\alpha}^+,-A_{3,c_0,\alpha}^+)$.
\item\label{rate} Moreover, there exists a constant $C(c_0,\alpha,p)$ such that for
all $t>0$
\bq\label{cota-m} \|\mm_{c_0,\alpha}(\cdot,t)-
\vec{A}^+_{c_0,\alpha}\chi_{(0,\infty)}(\cdot)-
\vec{A}^{-}_{c_0,\alpha}\chi_{(-\infty,0)}(\cdot)\|_{L^p(\R)}\leq
 C(c_0,\alpha,p) t^\frac{1}{2p}, \eq for all $p\in (1,\infty)$. In
addition, if $\alpha>0$, \eqref{cota-m} also holds for $p=1$.
Here, $\chi_E$ denotes the characteristic function of a set $E$.
\end{enumerate}
\end{thm}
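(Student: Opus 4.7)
The proof splits naturally into an ODE analysis of the Serret--Frenet profile $\m_{c_0,\alpha}$ and the translation of its spatial decay into the $L^p$-rate \eqref{cota-m} via the self-similar scaling $s\mapsto s/\sqrt t$. For (i), the coefficients in \eqref{serret}--\eqref{c-tau} are smooth and globally defined, so the standard existence--uniqueness theorem for linear ODEs yields a unique smooth solution on $\R$; conservation of the Gram matrix along the skew-symmetric system \eqref{serret} forces $(\m,\n,\b)$ to remain orthonormal, so $\m\colon\R\to\S^2$. The fact that the self-similar ansatz actually solves \eqref{LL} with the precise curvature/torsion \eqref{c-tau} is exactly the self-similar reduction carried out in Section~\ref{sec-self-similar}.

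For (ii) I would apply a Hasimoto-type transformation. Setting
\begin{equation*}
T(s)=\int_0^s\tau_{c_0,\alpha}(\sigma)\,d\sigma=\frac{\beta s^2}{4},\qquad u(s)=c_{c_0,\alpha}(s)\,e^{iT(s)}=c_0\,e^{-(\alpha-i\beta)s^2/4},
\end{equation*}
and $\mathbf{f}(s):=(\n(s)+i\b(s))\,e^{iT(s)}\in\C^3$, a direct computation from \eqref{serret} gives the reduced system
\begin{equation*}
\m'(s)=\Re\bigl(\overline{u(s)}\,\mathbf{f}(s)\bigr),\qquad \mathbf{f}'(s)=-u(s)\,\m(s),\qquad |\mathbf{f}(s)|\equiv\sqrt 2,
\end{equation*}
so that $|\m'(s)|,|\mathbf{f}'(s)|\leq |u(s)|=c_0 e^{-\alpha s^2/4}$. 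For $\alpha>0$ this bound is integrable on $\R$, hence the limits $\vec A^{\pm}_{c_0,\alpha}:=\lim_{s\to\pm\infty}\m(s)$ exist; a Gaussian tail estimate refines this to $|\m(s)-\vec A^{\pm}|\leq C(c_0,\alpha)\,e^{-\alpha s^2/4}/(1+|s|)$. In the undamped case $\alpha=0$, $|u|$ is merely bounded but $u=c_0 e^{is^2/4}$ oscillates, and a single oscillatory integration by parts (based on $u=(is/2)^{-1}u'$ for $s\neq 0$) in the tail integral of $u\,\m$ still yields $|\m(s)-\vec A^{\pm}|=\mathcal O(1/|s|)$. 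Orthonormality passes to the limit, so $\vec A^{\pm}\in\S^2$. The identity $\vec A^-_{c_0,\alpha}=(A^+_{1,c_0,\alpha},-A^+_{2,c_0,\alpha},-A^+_{3,c_0,\alpha})$ then follows from the parities $c_{c_0,\alpha}(-s)=c_{c_0,\alpha}(s)$ and $\tau_{c_0,\alpha}(-s)=-\tau_{c_0,\alpha}(s)$: with $R=\mathrm{diag}(1,-1,-1)$, the reflected triple $(R\m(-\cdot),-R\n(-\cdot),-R\b(-\cdot))$ solves \eqref{serret} with the same initial data \eqref{IC}, and uniqueness gives $\m(-s)=R\m(s)$ for every $s$.

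For (iii), the change of variables $y=s/\sqrt t$ converts the $L^p$-norm into
\begin{equation*}
\|\mm_{c_0,\alpha}(\cdot,t)-\vec A^+\chi_{(0,\infty)}-\vec A^-\chi_{(-\infty,0)}\|_{L^p(\R)}^p=t^{1/2}\biggl(\int_0^{\infty}|\m(y)-\vec A^+|^p\,dy+\int_{-\infty}^0|\m(y)-\vec A^-|^p\,dy\biggr),
\end{equation*}
so \eqref{cota-m} reduces to finiteness of the two integrals on the right, with a constant depending on $c_0,\alpha,p$. The tail bounds above show that for $\alpha>0$ the integrand decays faster than any polynomial (and is bounded near $y=0$), giving finiteness for every $p\in[1,\infty)$; for $\alpha=0$ the decay is only $\mathcal O(|y|^{-p})$ at infinity, which is integrable exactly when $p>1$. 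The main difficulty in this whole scheme is the derivation of sharp pointwise tails for $\m(s)-\vec A^\pm$ with a constant depending in a controlled way on $\alpha$: the Gaussian regime $\alpha>0$ is routine, but the undamped limit requires the oscillatory integration by parts and some additional care to keep $C(c_0,\alpha,p)$ under control as $\alpha\downarrow 0$.
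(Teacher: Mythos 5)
Your proposal is correct in outline but follows a genuinely different route from the paper for part \ref{converge}. The paper never works directly with the first-order Frenet system at this stage: it passes through the stereographic/Riccati reduction of Lemma~\ref{def-f} to the second-order equation \eqref{eq-f0}, derives full asymptotics (with uniform-in-$\alpha$ errors) for $z=|f|^2$, $y=\Re(\bar f f')$, $h=\Im(\bar f f')$ (Propositions~\ref{cotas}--\ref{prop-asymp}, Corollary~\ref{cor-asymp-bis}), i.e.\ Theorem~\ref{thm-conver}, and then reads off \eqref{convergencia} and \eqref{cota-m} from \eqref{asym-m} via the same change of variables $\eta=s/\sqrt t$ and the same splitting at $s_0$ that you use in part \ref{rate}. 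Your Hasimoto-type formulation $\m'=\Re(\bar u\,\mathbf f)$, $\mathbf f'=-u\,\m$, $|\mathbf f|\equiv\sqrt2$, with $u=c_0e^{-(\alpha-i\beta)s^2/4}$, together with the reflection argument (which coincides with the paper's proof of Theorem~\ref{thm-conver}-\ref{symmetries}), is a lighter and legitimate alternative: since the constant in \eqref{cota-m} is allowed to depend on $\alpha$, crude tail bounds suffice, and you avoid the whole machinery needed for the $\alpha$-uniform asymptotics. What you lose is exactly what the paper needs later: the refined expansion \eqref{asym-m}--\eqref{asym-b} with $\alpha$-independent errors, on which Theorems~\ref{thm-c-0} and \ref{thm-alpha1-2} and Proposition~\ref{jump} rest.

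One step needs to be made honest: the case $\alpha=0$. After one integration by parts in $\int_s^\infty \bar u\,\mathbf f\,d\sigma$ (note that the integral relevant for $\m$ is $\int\bar u\,\mathbf f$, not the ``tail integral of $u\,\m$'' you mention, which governs $\mathbf f$), the derivative falling on $\mathbf f$ produces $e^{-i\sigma^2/4}\mathbf f'(\sigma)=-c_0\,\m(\sigma)$: the oscillation cancels and you are left with a term of size $1/\sigma$ which is \emph{not} absolutely integrable. The argument closes only because this resonant term enters multiplied by $2i$, hence is purely imaginary, and disappears when you take the real part in $\m'=c_0\Re\bigl(e^{-is^2/4}\mathbf f\bigr)$; you must say this, otherwise the claimed $O(1/|s|)$ bound is unjustified. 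Had you really integrated by parts in $\int u\,\m$, the analogous resonant term does not cancel, and in fact $\mathbf f$ (equivalently $(\n+i\b)e^{is^2/4}$) does \emph{not} converge at $\alpha=0$: this is precisely the logarithmic phase $c_0^2\ln s$ appearing in \eqref{def-phi}. With that cancellation made explicit, your tail estimates and the resulting integrability dichotomy ($p>1$ when $\alpha=0$, $p\geq1$ when $\alpha>0$) match the paper's proof of part \ref{rate}.
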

%
The graphics in Figure~\ref{fig-tan} depict the profile
$\m_{c_0,\alpha}(s)$ for fixed $c_0=0.8$ and the values of $\alpha=0.01$, $\alpha=0.2$,
and $\alpha=0.4$. In particular it can be observed how the
convergence of $\m_{c_0,\alpha}$ to
$\vec{A}^\pm_{c_0,\alpha}$ is accelerated by the diffusion $\alpha$.
%

\begin{figure}[ht]
\vspace{1truecm}
\hspace*{-1.2cm}
\subfloat[$\alpha=0.01$]{
\begin{overpic}[scale=0.7]{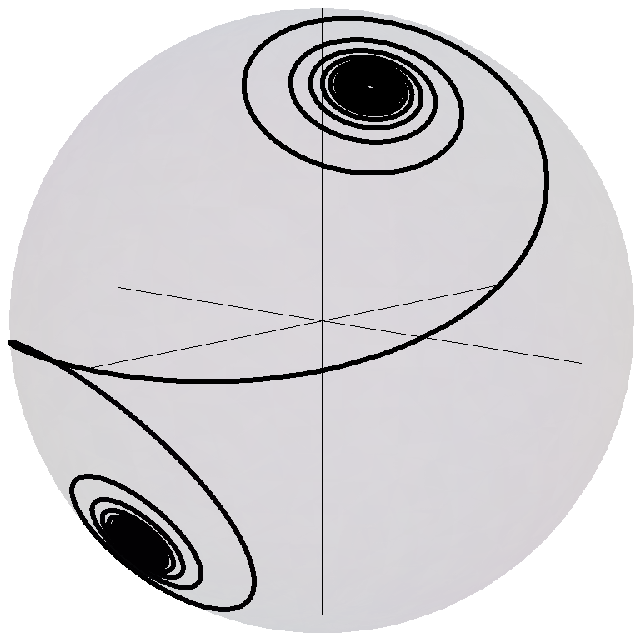}
\put(11,46){\tiny{$m_1$}}
\put(85,47){\tiny{$m_2$}}
\put(48,91){\tiny{$m_3$}}
\end{overpic}
}
\hspace*{-0.9cm}
\subfloat[$\alpha=0.2$]{
\begin{overpic}[scale=0.7]{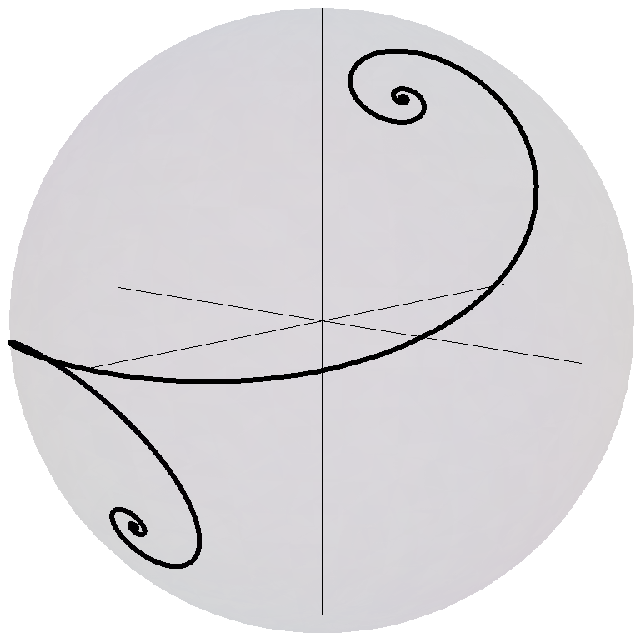}
\put(11,46){\tiny{$m_1$}}
\put(85,47){\tiny{$m_2$}}
\put(48,91){\tiny{$m_3$}}
\end{overpic}
}
\hspace*{-0.9cm}
\subfloat[$\alpha=0.4$]{
\begin{overpic}[scale=0.7]{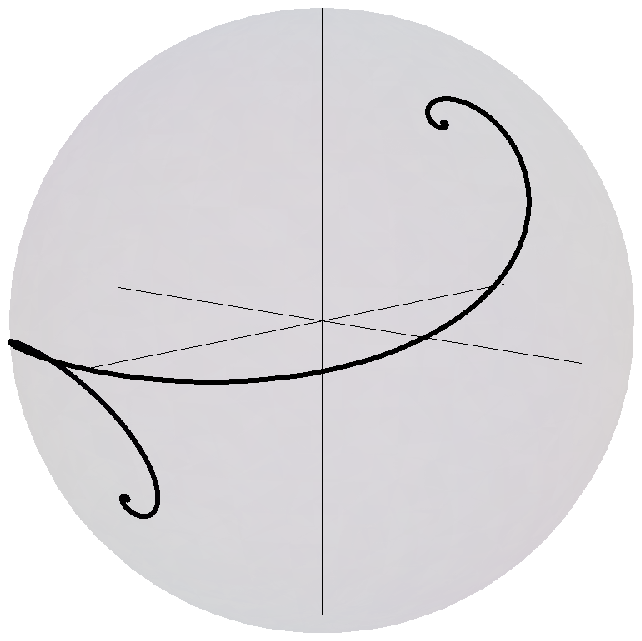}
\put(11,46){\tiny{$m_1$}}
\put(85,47){\tiny{$m_2$}}
\put(48,91){\tiny{$m_3$}}
\end{overpic}
}
\caption{The profile $\m_{c_0,\alpha}$ for $c_0=0.8$ and different values of $\alpha$.}
\label{fig-tan}
\end{figure}
Notice that the initial condition
\begin{equation} \label{data}
 \mm_{c_0,\alpha}(s,0)=\vec{A}^+_{c_0,\alpha}\chi_{(0,\infty)}(s)+\vec{A}^{-}_{c_0,\alpha}\chi_{(-\infty,0)}(s),
\end{equation}
has a jump singularity at the point $s=0$ whenever the vectors
$\vec{A}^+_{c_0,\alpha}$ and $\vec{A}^-_{c_0,\alpha}$ satisfy
$$
 \vec{A}^+_{c_0,\alpha}\neq \vec{A}^-_{c_0,\alpha}.
$$
In this situation (and we will be able to prove analytically this is the case at
least for certain ranges of the parameters $\alpha$ and $c_0$, see Proposition~\ref{jump} below),
Theorem~\ref{Theorem1} provides a bi-parametric family of global
smooth solutions of \eqref{LL} associated to a discontinuous singular  initial
data (jump-singularity).

As has been already mentioned, in the absence of damping ($\alpha=0$), singular self-similar solutions of the Schr\"odinger map equation were previously obtained in \cite{vega-gutierrez}, \cite{lakshmanan0} and  \cite{buttke}. In this framework, Theorem~\ref{Theorem1} establishes the persistence of a jump singularity for self-similar solutions in the presence of dissipation.

Some further remarks on the results stated in
Theorem~\ref{Theorem1} are in order. Firstly, from the self-similar nature of the
solutions $\mm_{c_0,\alpha}(s,t)$ and the Serret--Frenet equations
\eqref{serret}, it follows that the curvature and torsion associated
to these solutions are of the self-similar form and given by
\begin{equation} \label{cur-tor}
 \cc_{c_0,\alpha}(s,t)=\frac{c_0}{\sqrt{t}}e^{-\frac{\alpha s^2}{4t}}
 \qquad {\hbox{and}} \qquad
 \uptau_{c_0,\alpha}(s,t)= \frac{\beta s}{2\sqrt{t}}.
\end{equation}
As a consequence, the total energy $E(t)$ of the spin
$\mm_{c_0,\alpha}(s,t)$ found in Theorem~\ref{Theorem1} is
expressed as
\begin{align}
 \label{total-energy}
  E(t)={}&  \frac{1}{2}\int_{-\infty}^{\infty} |\mm_s(s,t)|^2\, ds=
  \frac{1}{2}\int_{-\infty}^{\infty} \cc_{c_0,\alpha}^2(s,t)\, ds
      \nonumber \\
={}&
  \frac{1}{2}\int_{-\infty}^{\infty} \left(  \frac{c_0}{\sqrt{t}} e^{-\frac{\alpha s^2}{4t}}\right)^2\, ds=
  c_0^2\sqrt{\frac{\pi}{\alpha t}},\qquad \alpha>0, \quad t>0.
\end{align}
It is evident from \eqref{total-energy} that the total energy of the
spin chain at the initial time $t=0$ is infinite, while the total
energy of the spin becomes finite for all positive times, showing
the dissipation of energy in the system in the presence of damping.

Secondly, it is also important to remark that in the setting of Schr\"odinger
equations,  for fixed $\alpha\in[0,1]$ and $c_0>0$, the solution
$\mm_{c_0,\alpha}(s,t)$ of (\ref{LL}) established in
Theorem~\ref{Theorem1} is associated through the Hasimoto
transformation (\ref{hasimoto}) to the filament function
\bq\label{u}
u_{c_0,\alpha}(s,t)=\frac{c_0}{\sqrt{t}}e^{(-\alpha+i\beta)\frac{s^2}{4t}},
\eq
which solves
\begin{equation}
 \label{Schrodinger}
  iu_t+(\beta-i\alpha)u_{ss}+\frac{u}{2}\left(\beta\abs{u}^2+2\alpha\int_0^s
\Im(\bar u u_s)-A(t)\right)=0,\quad {\hbox{with}}\quad A(t)=\frac{\beta c_0^2}{t}
\end{equation}
and is such that at initial time $t=0$
$$
 u_{c_0,\alpha}(s,0)=2c_0\sqrt{\pi(\alpha+i\beta)}\delta_0.
$$
Here $\delta_0$ denotes the delta distribution at the point $s=0$
and $\sqrt{z}$ denotes the square root of a complex number $z$ such
that $\Im (\sqrt{z})>0$.

Notice that the solution
$u_{c_0,\alpha}(s,t)$ is very rough at initial time, and in particular $u_{c_0,\alpha}(s,0)$ does not belong to the Sobolev class $H^s$ for any $s\geq 0$. Therefore, the standard arguments (that is a Picard iteration scheme based on Strichartz estimates and Sobolev-Bourgain spaces) cannot be applied at least not in a straightforward way to study the local well-posedness of the initial value problem for the Schr\"odinger equations (\ref{Schrodinger}). The existence of solutions of the Scr\"odinger equations (\ref{Schrodinger}) associated to an initial data proportional to a Dirac delta opens the question of developing a well-posedness theory for Schr\"odinger equations of the type considered here to include initial data of infinite energy. This question was addressed by  A.~Vargas and L.~Vega in \cite{vargas-vega} and A.~Gr\"unrock in \cite{grunrock} in the case $\alpha=0$ and when $A(t)=0$ (see also \cite{banica-vega1} for a related problem), but we are not aware of any results in this setting when $\alpha>0$ (see \cite{guo} for related well-posedness results in the case $\alpha>0$ for initial data in Sobolev spaces of positive index). Notice that when $\alpha>0$ , the solution \eqref{u} has
infinite energy at the initial time, however the energy becomes
finite for any $t>0$. Moreover, as a consequence of the
exponential decay in the space variable when $\alpha>0$,
$u_{c_0,\alpha}(t)\in H^m(\R)$, for all $t>0$ and $m\in \N$. Hence these solutions do not fit into the usual functional framework for solutions of the Schr\"odinger equations (\ref{Schrodinger}).

As already mentioned, one of the
main goals of this paper is to study both the qualitative and
quantitative effect of the damping parameter $\alpha$ and the
parameter $c_0$ on the dynamical behaviour of  the family  $\{
\mm_{c_0,\alpha} \}_{c_0,\alpha}$  of self-similar solutions of
\eqref{LL} found in Theorem~\ref{Theorem1}. Precisely, in an
attempt to fully understand the regularization of the solution at
positive times close to the initial time $t=0$, and to understand how
the presence of damping affects the dynamical behaviour of these
self-similar solutions, we aim to give answers to the following
questions:
\begin{itemize}
\item[Q1:] Can we obtain a more precise behaviour of the
solutions $\m_{c_0,\alpha}(s,t)$ at positive times $t$ close to
zero?
\item[Q2:] Can we understand the limiting vectors
$\vec{A}^{\pm}_{c_0,\alpha}$ in terms of the parameters $c_0$
and $\alpha$?
\end{itemize}
In order to address our first question, we observe that, due to the self-similar nature of these solutions (see (\ref{self-similar})), the behaviour of the family of solutions
$\mm_{c_0,\alpha}(s,t)$ at positive times close to the initial time $t=0$ is directly related to the study of the asymptotics of the associated profile $\m_{c_0,\alpha}(s)$ for large values of $s$.
 In addition, the symmetries of $\m_{c_0,\alpha}(s)$  (see Theorem~\ref{thm-conver} below) allow to reduce ourselves to obtain the behaviour of the profile  $\m_{c_0,\alpha}(s)$ for large positive values of the space variable. The precise asymptotics of the profile is given in the following theorem.
%
\begin{thm}[Asymptotics] \label{thm-conver}
 Let $\alpha\in[0,1]$, $c_0>0$ and $\{ {\m}_{c_0,\alpha}, {\n}_{c_0,\alpha}  , {\b}_{c_0,\alpha} \}$ be the solution of the Serret--Frenet
system \eqref{serret} with curvature and torsion given by \eqref{c-tau} and initial conditions \eqref{IC}.
Then,
\begin{enumerate}[label=({\roman*}),ref={\it{({\roman*})}}]
\item\label{symmetries} ({\it{Symmetries}}). The components of ${\m}_{c_0,\alpha}(s)$, ${\n}_{c_0,\alpha}(s) $ and ${\b}_{c_0,\alpha}(s)$ satisfy respectively that
 \begin{itemize}
  \item[$\bullet$] $m_{1, c_0,\alpha}(s)$ is an even function,  and $m_{j, c_0,\alpha}(s)$ is an odd function for $j\in\{2,3\}$.
  \item[$\bullet$] $n_{1, c_0,\alpha}(s)$ and $b_{1, c_0,\alpha}(s)$ are odd functions, while $n_{j, c_0,\alpha}(s)$ and  $b_{j, c_0,\alpha}(s)$ are even functions for $j\in\{2,3\}$.
 \end{itemize}
\item\label{asymp} ({\it{Asymptotics}}).  There exist an unit vector $\vec{A}^+_{c_0,\alpha}\in \mathbb{S}^2$ and $\vec{B}^+_{c_0,\alpha}\in \R^3$ such that
the following asymptotics hold for all
$s\geq s_0=4\sqrt{8+c_0^2}$:
\begin{align}
 \m_{c_0,\alpha}(s)
 =&\vec{A}^+_{c_0,\alpha}-\frac{2c_0}{s}\vec{B}^+_{c_0,\alpha}e^{-\alpha s^2/4}(\alpha \sin(\vec{\phi}(s))+\beta \cos(\vec{\phi}(s)))
  \nonumber \\
  & -\frac{2c_0^2}{s^2} \vec{A}^+_{c_0,\alpha}e^{-\alpha s^2/2} +O\left(\frac{e^{-\alpha s^2/4}}{ s^3}\right),
  \label{asym-m}\\
 \vec{n}_{c_0,\alpha}(s)
 =&\vec{B}^+_{c_0,\alpha}\sin(\vec{\phi}(s))+\frac{2c_0}{s}\vec{A}^+_{c_0,\alpha}\alpha e^{-\alpha s^2/4} +O\left(\frac{e^{-\alpha s^2/4}}{ s^2}\right),
  \label{asym-n}\\
 \vec{b}_{c_0,\alpha}(s)
 =&\vec{B}^+_{c_0,\alpha}\cos(\vec{\phi}(s))+\frac{2c_0}{s}\vec{A}^+_{c_0,\alpha}\beta e^{-\alpha s^2/4} +O\left(\frac{e^{-\alpha s^2/4}}{ s^2}\right).\label{asym-b}
\end{align}
Here, $\sin(\vec{\phi})$ and $\cos(\vec{\phi})$ are understood
acting on each of the components of $\vec{\phi}=(\phi_1,\phi_2,\phi_3),$
with
\bq\label{def-phi} \phi_{j}(s)=a_j+\beta
\int_{s_0^2/4}^{s^2/4}\sqrt{1+ c_0^2\frac{e^{-2\alpha
\sigma}}{\sigma}}\,d\sigma, \quad j\in\{1,2,3\},
\eq
for some constants $a_1,a_2,a_3\in [0,2\pi)$, and the vector $\vec{B}^{+}_{c_0,\alpha}$ is given in terms of $\vec{A}^+_{c_0,\alpha}=(A^{+}_{j,c_0,\alpha})_{j=1}^{3}$ by
$$
\vec{B}^+_{c_0,\alpha}=((1-(A_{1,c_0,\alpha}^+)^2)^{1/2},(1-(A_{2,c_0,\alpha}^+)^2)^{1/2},(1-(A_{3, c_0,\alpha}^+)^2)^{1/2}).$$
\end{enumerate}
\end{thm}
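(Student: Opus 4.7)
My plan is to prove part \ref{symmetries} by a uniqueness argument and part \ref{asymp} by combining a complex-variable reformulation of \eqref{serret}, a WKB-type gauge transformation, and iterated integration by parts. For \ref{symmetries}, since $c_{c_0,\alpha}$ is even and $\tau_{c_0,\alpha}$ is odd, a direct check shows that the reflected frame $\tilde\m(s):=(m_1(-s),-m_2(-s),-m_3(-s))$, $\tilde\n(s):=(-n_1(-s),n_2(-s),n_3(-s))$, $\tilde\b(s):=(-b_1(-s),b_2(-s),b_3(-s))$ solves the same Serret--Frenet system \eqref{serret} with the same data \eqref{IC}. Uniqueness of the resulting linear ODE forces $(\tilde\m,\tilde\n,\tilde\b)\equiv(\m,\n,\b)$, which is precisely the parity statement.

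For \ref{asymp}, I would introduce $\vec N(s):=\n(s)+i\b(s)\in\mathbb C^3$, which by \eqref{serret} satisfies $\vec N'=-c\m-i\tau\vec N$. Gauging out the torsion via $\vec M(s):=\vec N(s)e^{i\beta s^2/4}$ yields the simpler forced equation $\vec M'(s)=-c_0\m(s)e^{(-\alpha+i\beta)s^2/4}$. Since $|\m|\equiv 1$ and $|e^{(-\alpha+i\beta)\sigma^2/4}|=e^{-\alpha\sigma^2/4}$ is integrable at infinity (absolutely for $\alpha>0$, and as a Fresnel-type oscillatory integral for $\alpha=0$ after one integration by parts), $\vec M(s)\to\vec M_\infty\in\mathbb C^3$, and from $\m'=c\n$ one deduces $\m(s)\to\vec A^+\in\S^2$. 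The algebraic Frenet identities $|\vec N|^2=2$, $\vec N\cdot\vec N=0$ and $\m\cdot\vec N=0$ pass to the limit and force $\vec M_\infty=\vec p+i\vec q$ with $(\vec A^+,\vec p,\vec q)$ orthonormal, so that $p_j^2+q_j^2=1-(A_{j,c_0,\alpha}^+)^2=(B_{j,c_0,\alpha}^+)^2$, identifying $\vec B^+_{c_0,\alpha}$ componentwise up to overall signs absorbed into the constants $a_j$.

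For the quantitative rate I would iterate integration by parts in $\vec M_\infty-\vec M(s)=c_0\int_s^\infty\m(\sigma)e^{(-\alpha+i\beta)\sigma^2/4}d\sigma$, using $e^{(-\alpha+i\beta)\sigma^2/4}=\frac{2}{(-\alpha+i\beta)\sigma}\frac{d}{d\sigma}e^{(-\alpha+i\beta)\sigma^2/4}$; one IBP gives $\vec M_\infty-\vec M(s)=\frac{2c_0\m(s)}{(\alpha-i\beta)s}e^{(-\alpha+i\beta)s^2/4}+O(e^{-\alpha s^2/4}/s^3)$. Translating back via $\vec N=\vec M e^{-i\beta s^2/4}$, separating real and imaginary parts, and rewriting $p_j\cos\theta+q_j\sin\theta=B_j^+\sin(\theta+\gamma_j)$ recovers the structure of \eqref{asym-n}--\eqref{asym-b}. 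The expansion \eqref{asym-m} for $\m$ then comes from $\m(s)=\vec A^+-\int_s^\infty c(\sigma)\n(\sigma)d\sigma$ by inserting the leading $\n\sim\vec B^+\sin\vec\phi$ and doing one more IBP: the identity $1/(\alpha-i\beta)=\alpha+i\beta$ produces the $\alpha\sin\vec\phi+\beta\cos\vec\phi$ coefficient, while the product of two oscillating factors produces the non-oscillatory $e^{-\alpha s^2/2}/s^2$ contribution.

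The hardest step will be pinning down the refined phase $\phi_j$ in \eqref{def-phi}, whose integrand $\sqrt{1+c_0^2 e^{-2\alpha\sigma}/\sigma}$ goes beyond the bare $\beta s^2/4$ delivered by the first gauge. For $\alpha>0$ this refinement is a bounded function of $s$ and merely renames the $a_j$; but for $\alpha=0$ it produces a genuine logarithmic phase correction $\beta c_0^2\log s$ of long-range-scattering type, without which the stated decay rate fails. Capturing this uniformly in $\alpha\in[0,1]$ forces me to replace the naive gauge $e^{i\beta s^2/4}$ by a more refined one whose phase derivative matches $\sqrt{\tau^2+\beta^2 c^2}$, and to verify that the corresponding WKB remainder decays at the claimed rate; this, together with controlling the frame on the compact interval $[0,s_0]$ before the asymptotic regime kicks in, is where the bulk of the technical effort lies.
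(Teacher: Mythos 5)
Your symmetry argument in part \ref{symmetries} is exactly the paper's (reflect, check that the reflected trihedron solves the same system with the same data, conclude by uniqueness). For part \ref{asymp} you take a genuinely different route: instead of integrating the Frenet system through the stereographic/Riccati change of variables of Lemma~\ref{def-f}, reducing to the second-order ODE \eqref{eq-f0} and analysing $z=|f|^2$, $y=\Re(\bar f f')$, $h=\Im(\bar f f')$ as the paper does (Propositions~\ref{cotas} and~\ref{prop-asymp}, Lemma~\ref{int-osc}, Lemma~\ref{lema-sin-beta}, Corollary~\ref{cor-asymp-bis}), you work directly with $\vec N=\n+i\b$, $\vec N'=-c\,\m-i\tau\vec N$, gauge out the torsion and integrate by parts. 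Several pieces of this are correct and attractive: the qualitative convergence $\m\to\vec A^+$, the identification of $\vec B^+_{c_0,\alpha}$ from orthonormality of the limiting frame (the paper gets $B_j^+=(1-(A_j^+)^2)^{1/2}$ instead from the energy identities), and your guess that the corrected phase derivative must be $\sqrt{\tau^2+\beta^2c^2}$, which after the substitution $\sigma=s^2/4$ is precisely the integrand in \eqref{def-phi}.

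However, there is a genuine gap, in two places. First, the quantitative heart of the theorem is that all error constants depend on $c_0$ only, uniformly in $\alpha\in[0,1]$; you correctly diagnose that the naive gauge $e^{i\beta s^2/4}$ leaves a non-oscillatory term that is logarithmically divergent at $\alpha=0$ and must be resummed into the phase, but you explicitly defer the construction of the refined gauge and the proof that its remainder is $O(e^{-\alpha s^2/4}/s^2)$ uniformly in $\alpha$. That deferred step is exactly where the paper's work lies (the degeneration of the naive bound is displayed in \eqref{cota-mala}, and Lemma~\ref{int-osc}, the diagonalization with eigenvalues $\tfrac{\alpha K}{2}\pm i\beta\Delta^{1/2}$, the separate regime $\alpha\in[1/2,1)$ and the continuity analysis of Subsection~\ref{subsec-dependence} exist precisely to control it), so the plan stops where the proof begins. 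Second, and independently of the phase issue, your derivation of \eqref{asym-m} would fail as described: inserting \eqref{asym-n} into $\m(s)=\vec A^+-\int_s^\infty c\,\n$ forces you to integrate the remainder of $\n$, and $\int_s^\infty c_0e^{-\alpha\sigma^2/4}\,O\bigl(e^{-\alpha\sigma^2/4}/\sigma^2\bigr)\,d\sigma=O\bigl(e^{-\alpha s^2/2}/s\bigr)$, which at $\alpha=0$ is only $O(1/s)$ — two powers short of the claimed $O(e^{-\alpha s^2/4}/s^3)$, and non-uniform for small $\alpha$. To close this you must either prove that the remainder of $\n$ is itself oscillatory with enough structure to gain decay upon integration against $c$, or do what the paper does: establish first a pointwise identity of the type \eqref{z-z-inf-s3}, expressing $\m-\vec A^+$ (equivalently $z-z_\infty$) in terms of $\n,\b$ (equivalently $y,h$) evaluated at the same point $s$, with a remainder $R_0$ proved to be $O(e^{-\alpha s^2/4}/s^3)$ by the separate bootstrap of part {\it(iii)} of Proposition~\ref{cotas}; only then do the $y,h$ asymptotics yield \eqref{asym-m}, using cancellations such as the explicit factor $\alpha$ in front of $\int_s^\infty e^{-\alpha\sigma^2/2}\sigma^{-1}\,d\sigma$ that makes \eqref{int-conv} applicable. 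Without one of these devices the claimed $s^{-3}$ rate in \eqref{asym-m} is not reached.
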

As we will see in Section~\ref{sec-self-similar}, the convergence and rate of convergence of the solutions $\mm_{c_0,\alpha}(s,t)$ of the LLG equation established in parts \ref{converge} and \ref{rate} of Theorem~\ref{Theorem1} will be obtained as a consequence of the more refined asymptotic analysis of the associated profile given in Theorem~\ref{thm-conver}.

With regard to the asymptotics of the profile established in part
\ref{asymp} of Theorem~\ref{thm-conver}, it is important to mention
the following:
\begin{enumerate}[label={({\alph*})},ref=({\alph*})]
\item The errors in the asymptotics in
Theorem~\ref{thm-conver}-\ref{asymp} depend only on $c_0$. In other words, the bounds for the errors terms are independent of $\alpha\in [0,1]$. More
precisely, we use the notation $O(f(s))$ to denote a function
for which exists a constant  $C(c_0)>0$ depending on $c_0$, but
independent on $\alpha$, such that \bq\label{cota-rmk}\left|
O\left(f(s)\right)\right|\leq C(c_0)\abs{f(s)}, \quad \text{for
all } s\geq s_0. \eq
\item\label{rem-cont-a} The terms $\vec A^+_{c_0,\alpha}$, $\vec B^+_{c_0,\alpha}$, $B_j^+\sin(a_j)$ and $B_j^+\cos(a_j)$, $j\in\{1,2,3\}$, and
the error terms
in Theorem~~\ref{thm-conver}-\ref{asymp}
depend continuously on $\alpha\in [0,1]$ (see Subsection~\ref{subsec-dependence} and Corollary~\ref{cor-asymp-bis}).
Therefore,  the asymptotics \eqref{asym-m}--\eqref{asym-b} show how the profile $\m_{c_0,\alpha}$ converges to $\m_{c_0,0}$ as $\alpha\to 0^+$
and to $\m_{c_0,1}$ as $\alpha\to 1^-$. In particular, we recover the asymptotics for $\m_{c_0,0}$ given in \cite{vega-gutierrez}.

\item We also remark that using the Serret--Frenet formulae and the
asymptotics in Theorem~~\ref{thm-conver}-\ref{asymp}, it is
straightforward to obtain the asymptotics for the derivatives of
$\mm_{c_0,\alpha}(s,t)$.

\item When $\alpha=0$ and for fixed $j\in\{1,2,3\}$, we can write
$\phi_j$ in \eqref{def-phi} as
\bqq
\phi_{j}(s)=a_j+\frac{s^2}{4}+c_0^2\ln(s)+C(c_0)+O\left(\frac{1}{s^2}\right),
\eqq
and we recover the logarithmic contribution in the oscillation previously found in \cite{vega-gutierrez}. Moreover, in this case the asymptotics in part \ref{asymp} represents an improvement of the one established in Theorem~1 in \cite{vega-gutierrez}.

When $\alpha>0$, $\phi_j$ behaves like

\bq\label{asymp-phi} \phi_{j}(s)=a_j+
\frac{\beta s^2}{4}+C(\alpha,c_0)+O\left(\frac{e^{-\alpha
s^2/2}}{\alpha s^2}\right), \eq
and there is no logarithmic correction in the oscillations in the presence of damping.

Consequently, the phase function $\vec{\phi}$ defined in (\ref{def-phi}) captures the different nature of the oscillatory character of the solutions in both the absence and the presence of damping in the system of equations.
\item
When $\alpha=1$, there exists an explicit formula for $\vec{m}_{c_0,1}$, $\vec{n}_{c_0,1}$ and $\vec{b}_{c_0,1}$, and in particular we have explicit expressions for the vectors $\vec{A}^{\pm}_{c_0,1}$  in terms of the parameter $c_0>0$ in the asymptotics given in part \ref{asymp}.
See Appendix.
\item At first glance, one might think that the term
$-2c_0^2 \vec A^+_{c_0,\alpha}e^{-\alpha s^2/2}/s^2$ in \eqref{asym-m} could be included in the error term
$O(e^{-\alpha s^2/4}/s^3)$. However, we cannot do this because
\bq\label{lambert}
\frac{e^{-\alpha s^2/2}}{s^2}>\frac{e^{-\alpha s^2/4}}{ s^3}, \qquad
\text{ for all } 2\leq s\leq \left(\frac{2}{3\alpha}\right)^{1/2}, \ \alpha\in(0,1/8],
\eq
and in our notation the big-$O$ must be independent of $\alpha$.
(The exact interval where the inequality in \eqref{lambert} holds can be determined using the so-called Lambert $W$ function.)

\item Let $\vec B_{c_0,\alpha,\sin}^+=(B_j\sin(a_j))_{j=1}^3$, $\vec B_{c_0,\alpha,\cos}^+=(B_j\cos(a_j))_{j=1}^3$. Then the orthogonality of $\m_{c_0,\alpha}$, $\n_{c_0,\alpha}$ and $\b_{c_0,\alpha}$ together with the asymptotics \eqref{asym-m}--\eqref{asym-b} yield
\begin{align*}
  \vec A^+_{c_0,\alpha}\cdot \vec B_{c_0,\alpha,\sin}^+=  \vec A^+_{c_0,\alpha}\cdot \vec B_{c_0,\alpha,\cos}^+=  \vec B_{c_0,\alpha,\sin}^+\cdot \vec B_{c_0,\alpha,\cos}^+=0,
\end{align*}
which gives relations between the phases.

\item Finally, the amplitude of the leading order term controlling the wave-like behaviour  of the solution $\m_{c_0,\alpha}(s)$ around
$\vec A^{\pm}_{c_0,\alpha}$ for values of $s$ sufficiently large is of the order $\displaystyle{c_0\, e^{-\alpha s^2/4}/{s}}$,
from which one observes how the convergence of the solution to its limiting values $\vec A^{\pm}_{c_0,\alpha}$ is accelerated in the presence of damping in the system. See Figure~\ref{fig-tan}.
\end{enumerate}
We conclude the introduction by stating the results answering the second of our questions. Precisely,
Theorems~\ref{thm-c-0} and \ref{thm-alpha1-2} below establish the dependence of the vectors $\vec{A}^{\pm}_{c_0,\alpha}$ in Theorem~\ref{Theorem1} with respect to the parameters $\alpha$ and $c_0$.
Theorem~\ref{thm-c-0} provides the behaviour of the limiting vector $\vec{A}^{+}_{c_0,\alpha}$
for a fixed value of $\alpha\in (0,1)$ and ``small'' values of $c_0>0$, while Theorem~\ref{thm-alpha1-2}
states the behaviour of $\vec{A}^{+}_{c_0, \alpha}$ for fixed
$c_0>0$ and $\alpha$ close to the limiting values $\alpha=0$ and
$\alpha=1$. Recall that $\vec{A}_{c_0,\alpha}^{-}$ is expressed in terms of the coordinates of $\vec{A}_{c_0,\alpha}^{+}$ as
\begin{equation}\label{A-}
 \vec{A}_{c_0,\alpha}^{-}=(A_{1,c_0,\alpha}^{+}, -A_{2,c_0,\alpha}^{+}, -A_{3,c_0,\alpha}^{+})
\end{equation}
(see part \ref{converge} of Theorem~\ref{Theorem1}).
\begin{thm}\label{thm-c-0} Let $\alpha\in[0,1]$, $c_0>0$, and $\vec{A}^+_{c_0,\alpha} =(A^{+}_{j,c_0,\alpha})_{j=1}^{3}$
be the unit vector  given  in Theorem~\ref{thm-conver}. Then $\vec{A}^+_{c_0,\alpha}$ is a continuous function of $c_0>0$. Moreover, if
$\alpha\in(0,1]$ the following inequalities hold true:
\begin{align}
&\abs{A_{1, c_0,\alpha}^+ - 1}\leq \frac{c_0^2\pi}{\alpha}\left(1+\frac{c_0^2\pi}{8\alpha}\right),\label{A1}
 \\[2ex]
&\left|A_{2, c_0,\alpha}^+-c_0\frac{\sqrt{\pi(1+\alpha)}}{\sqrt2}\right|\leq \frac{c_0^2\pi}{4}
  +\frac{c_0^2\pi}{\alpha\sqrt{2}}\left(1+\frac{c_0^2 \pi}{8}+c_0\frac{\sqrt{\pi(1+\alpha)}}{2\sqrt2}\right)
  +\left(\frac{c_0^2\pi}{2\sqrt{2}\alpha}\right)^2,
  \label{A2}\\[2ex]
&\left|A_{3, c_0,\alpha}^+-c_0\frac{\sqrt{\pi(1-\alpha)}}{\sqrt2}\right|\leq \frac{c_0^2\pi}{4}
  +\frac{c_0^2\pi}{\alpha\sqrt{2}}\left(1+\frac{c_0^2 \pi}{8}+c_0\frac{\sqrt{\pi(1-\alpha)}}{2\sqrt2}\right)
  +\left(\frac{c_0^2\pi}{2\sqrt{2}\alpha}\right)^2.
  \label{A3}
\end{align}
\end{thm}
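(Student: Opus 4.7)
The proof naturally splits into a quantitative step (the three inequalities) and a qualitative step (continuity in $c_0$). Both rest on recasting the Serret--Frenet system \eqref{serret} in a Hasimoto-type complex framework, for which the limits $A^+_{j,c_0,\alpha}$ admit a clean integral representation.

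\textbf{Step 1: Decoupled complex reformulation.} For each index $j\in\{1,2,3\}$ I would set
\[
 u(s)=c_0\,e^{(-\alpha+i\beta)s^2/4},\qquad v_j(s)=e^{i\beta s^2/4}\bigl(n_{j,c_0,\alpha}(s)+i\,b_{j,c_0,\alpha}(s)\bigr).
\]
Using $\tau_{c_0,\alpha}(s)=\beta s/2$ and the Serret--Frenet relations, a direct computation shows that the pair $(m_j,v_j)$ satisfies the decoupled system
\[
 m_j'(s)=\Re\bigl(\overline{u(s)}\,v_j(s)\bigr),\qquad v_j'(s)=-u(s)\,m_j(s),
\]
with $m_j(0)=\delta_{j1}$, $v_1(0)=0$, $v_2(0)=1$, $v_3(0)=i$, and the conservation law $m_j(s)^2+|v_j(s)|^2=1$ that encodes the orthonormality of the Frenet frame. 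In particular $|m_j(s)|\le 1$, which will supply the basic a priori bound used throughout.

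\textbf{Step 2: Integral representation and leading order.} Writing $F(s)=\int_0^s u(\tau)\,d\tau$, integrating the $v_j$-equation once and substituting in the $m_j$-equation produces the Volterra equation
\[
 m_j(s)=m_j(0)+\Re\bigl(v_j(0)\,\overline{F(s)}\bigr)-\int_0^s \Re\Bigl(\overline{u(\sigma)}\!\int_0^\sigma u(\tau)\,m_j(\tau)\,d\tau\Bigr)d\sigma.
\]
The Gaussian integral evaluates explicitly: using $\alpha^2+\beta^2=1$ and the principal square root with positive imaginary part,
\[
 F(\infty)=c_0\sqrt{\frac{\pi}{\alpha-i\beta}}=c_0\sqrt{\pi(\alpha+i\beta)}=c_0\sqrt{\pi}\Bigl(\sqrt{\tfrac{1+\alpha}{2}}+i\sqrt{\tfrac{1-\alpha}{2}}\Bigr).
\]
Passing to the limit $s\to\infty$ in the Volterra equation for each $j$ and using the initial data yields the \emph{exact} identities
\[
 A^+_{1,c_0,\alpha}-1=-I_1,\qquad A^+_{2,c_0,\alpha}-c_0\sqrt{\pi(1{+}\alpha)/2}=-I_2,\qquad A^+_{3,c_0,\alpha}-c_0\sqrt{\pi(1{-}\alpha)/2}=-I_3,
\]
where $I_j=\int_0^\infty \Re\bigl(\overline{u(\sigma)}\int_0^\sigma u(\tau)m_j(\tau)\,d\tau\bigr)d\sigma$. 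Thus the theorem reduces to an upper bound on each $I_j$.

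\textbf{Step 3: Quantitative bounds via Picard iteration.} Two elementary inputs drive all estimates: the $L^1$ bound $\int_0^\infty |u|\,d\tau=c_0\sqrt{\pi/\alpha}$, valid precisely when $\alpha>0$, and the antiderivative identity $\int_0^\sigma \Re(\overline u\,F)\,d\tau=\tfrac12|F(\sigma)|^2$, which at $\sigma=\infty$ equals $c_0^2\pi/2$. Using $|m_j|\le 1$ gives a crude first bound $|I_j|\le \tfrac12(\int_0^\infty|u|)^2=c_0^2\pi/(2\alpha)$; inserting the leading expressions $m_1^{(0)}=1$, $m_2^{(0)}=\Re F$, $m_3^{(0)}=\Im F$ into $I_j$, isolating the integrals that collapse to closed form (such as $\int\overline u F$, $\int\overline u F^2$ via $(d/d\sigma)|F|^2$ and integration by parts), and controlling the residual by another application of the $L^1$ bound yields the two-term structure displayed in \eqref{A1}--\eqref{A3}. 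The genuine technical obstacle is bookkeeping of the sharp constants, especially the delicate $1/\alpha$ factors that arise as $\alpha\to 0^+$ and the mixed terms $c_0\sqrt{\pi(1\pm\alpha)}/(2\sqrt 2)$ appearing in \eqref{A2}--\eqref{A3}, whose presence reflects the fact that the nontrivial initial data $v_j(0)$ for $j=2,3$ contribute additional cross terms absent from the $j=1$ case.

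\textbf{Step 4: Continuity in $c_0$.} Standard continuous dependence of ODE solutions on parameters yields continuity of $(s,c_0)\mapsto \m_{c_0,\alpha}(s)$ on any bounded $s$-interval. To upgrade this to continuity of the limit $\vec A^+_{c_0,\alpha}$, I would either invoke the uniform tail estimates furnished by Theorem~\ref{thm-conver}\ref{asymp} (whose error constants are locally bounded in $c_0$), or apply dominated convergence in the representation $\vec A^+_{c_0,\alpha}=(1,0,0)+\int_0^\infty c_0\,e^{-\alpha\sigma^2/4}\,\vec n_{c_0,\alpha}(\sigma)\,d\sigma$ with the Gaussian majorant. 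For the degenerate case $\alpha=0$, where the $L^1$ bound fails, continuity still follows from the integral representation above together with the oscillatory cancellation implicit in the previously established bounds of \cite{vega-gutierrez}.
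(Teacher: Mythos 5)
Your proposal is correct, but it takes a genuinely different route from the paper's. The paper goes through Lemma~\ref{def-f} to the second-order equation \eqref{eq-f0} with data \eqref{ic1}--\eqref{ic3}, proves in Proposition~\ref{prop-c-0} the estimate \eqref{asymp-z-c} for $z_{j,\infty}=\lim_{s\to\infty}|f_j|^2$ by integrating the equation twice (writing $f=f(0)+G+F$ with $|F|\le \sqrt{2E_0}\,c_0\pi/(2\alpha)$), and then converts to $\vec{A}^+_{c_0,\alpha}$ via $A_1^+=2z_{1,\infty}-1$, $A_j^+=z_{j,\infty}-1$; the squaring and these affine relations are exactly what generate the extra terms $c_0^2\pi/4$, the mixed terms and the squared term on the right of \eqref{A1}--\eqref{A3}. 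You instead stay with the first-order system for $(m_j,v_j)$, $v_j=e^{i\beta s^2/4}(n_j+ib_j)$ --- this is precisely the intermediate system \eqref{der-N}--\eqref{der-m} in the proof of Lemma~\ref{def-f}, before the Riccati substitution --- and estimate $A_j^+=\lim_{s\to\infty}m_j(s)$ directly from the Volterra identity. Your reformulation, the conservation law $m_j^2+|v_j|^2=1$, and the value $F(\infty)=c_0\sqrt{\pi}\left(\sqrt{(1+\alpha)/2}+i\sqrt{(1-\alpha)/2}\right)$ are all correct, and the leading terms match the centers in \eqref{A1}--\eqref{A3}. Moreover, the crude bound you already establish, $|I_j|\le \frac12\left(\int_0^\infty|u|\right)^2=c_0^2\pi/(2\alpha)$, is by itself \emph{stronger} than the stated inequalities, since each right-hand side of \eqref{A1}--\eqref{A3} is at least $c_0^2\pi/(\sqrt{2}\,\alpha)\ge c_0^2\pi/(2\alpha)$. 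So the refined iteration you describe as the ``genuine technical obstacle'' is unnecessary: you are not required to reproduce the paper's constants, only to dominate them, and your Step 3 already does. In short, the paper's constants are an artifact of estimating $z_{j,\infty}$ first; your route is more elementary and gives a sharper, $j$-independent bound around the same leading terms.

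Two caveats. First, state explicitly that the exact identities of Step 2 (and hence the inequalities) are only claimed for $\alpha>0$, where $u\in L^1$; this is consistent with the statement. Second, for continuity in $c_0$ your dominated-convergence representation works only for $\alpha>0$, and the appeal to ``oscillatory cancellation implicit in \cite{vega-gutierrez}'' for $\alpha=0$ is too vague to stand as a proof. Either make your first route precise --- the asymptotics of Theorem~\ref{thm-conver} give $|\m_{c_0,\alpha}(s)-\vec{A}^+_{c_0,\alpha}|\le C(c_0)/s$ for $s\ge 4\sqrt{8+c_0^2}$ with $C(c_0)$ locally bounded in $c_0$, so $\vec{A}^+_{c_0,\alpha}$ is a locally uniform limit of functions continuous in $c_0$ --- or argue as the paper does, using the identity \eqref{z-z-inf} evaluated at $s=1$ together with dominated convergence, which covers $\alpha=0$ as well.
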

The following result  provides an approximation of the behaviour of
$\vec{A}^+_{c_0,\alpha}$ for fixed $c_0>0$ and values of the Gilbert parameter close
to $0$ and $1$.
\begin{thm}\label{thm-alpha1-2} Let $c_0> 0$, $\alpha\in[0,1]$ and  $\vec{A}^{+}_{c_0,\alpha}$ be the unit vector
given in Theorem~\ref{thm-conver}. Then $\vec{A}^+_{c_0,\alpha}$ is a continuous function of $\alpha$ in $[0,1]$,
and the following inequalities hold true:
\begin{align}
 \abs{\vec{A}^+_{c_0,\alpha}-\vec{A}^+_{c_0,0}}&\leq C(c_0)\, \sqrt{\alpha}|\ln(\alpha)|, \quad \hbox{for all }\ \alpha\in(0,1/2],      \label{thm-alpha}  \\
\abs{\vec{A}^+_{c_0,\alpha}-\vec{A}^+_{c_0,1}} &\leq C(c_0)\, \sqrt{1-\alpha}, \quad \text{for all }\ \alpha\in [1/2,1].
       \label{thm-alpha2}
\end{align}
Here, $C(c_0)$ is a positive constant depending on $c_0$ but otherwise independent of $\alpha$.
\end{thm}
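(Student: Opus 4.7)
The strategy is to characterize $\vec A^+_{c_0,\alpha}$ as $\lim_{s\to+\infty}\m_{c_0,\alpha}(s)$, as granted by Theorem~\ref{thm-conver}\ref{asymp}, and, for any cut-off $s_\ast\geq s_0$, apply the triangle inequality
\begin{equation*}
 |\vec A^+_{c_0,\alpha}-\vec A^+_{c_0,\alpha_0}|\leq |\m_{c_0,\alpha}(s_\ast)-\m_{c_0,\alpha_0}(s_\ast)|+\sum_{\beta\in\{\alpha,\alpha_0\}}|\m_{c_0,\beta}(s_\ast)-\vec A^+_{c_0,\beta}|,
\end{equation*}
so that the two ``tail'' terms are absorbed by the asymptotic expansion~\eqref{asym-m}, whose big-$O$ is uniform in $\beta\in[0,1]$ and gives a contribution $O(c_0/s_\ast)$. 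All the real work lies in estimating the ``inner'' difference $|\m_{c_0,\alpha}(s_\ast)-\m_{c_0,\alpha_0}(s_\ast)|$.

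To estimate the inner term I would view the Serret--Frenet system~\eqref{serret} as a linear ODE on $\R^9$ for the frame $F_\alpha=(\m_{c_0,\alpha},\n_{c_0,\alpha},\b_{c_0,\alpha})$, with skew-symmetric generator depending on $\alpha$ only through $c_{c_0,\alpha}(\sigma)=c_0 e^{-\alpha\sigma^2/4}$ and $\tau_{c_0,\alpha}(\sigma)=\beta\sigma/2$. Since the frame remains orthonormal, the homogeneous flow is an isometry of $\R^9$ and Duhamel's formula yields
\begin{equation*}
 |F_\alpha(s_\ast)-F_{\alpha_0}(s_\ast)|\leq C\int_0^{s_\ast}\!\!\bigl(|c_{c_0,\alpha}-c_{c_0,\alpha_0}|+|\tau_{c_0,\alpha}-\tau_{c_0,\alpha_0}|\bigr)(\sigma)\,d\sigma.
\end{equation*}
Applied crudely with $\alpha_0=0$ using $|c_{c_0,\alpha}-c_{c_0,0}|\leq c_0\min(\alpha\sigma^2/4,1)$ and $|\tau_{c_0,\alpha}-\tau_{c_0,0}|\leq \alpha^2\sigma/2$, this gives bounds that are too weak to reach the $\sqrt\alpha|\ln\alpha|$ rate. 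The sharper estimate is obtained by refining the Duhamel expansion so that cancellations from the oscillatory phase $\vec{\phi}(s)=\beta s^2/4+O(1)$ of~\eqref{def-phi} can be harvested: integrating by parts against $\sin\vec{\phi}$ and $\cos\vec{\phi}$ (which are the leading-order oscillations of $\n_{c_0,\alpha},\b_{c_0,\alpha}$ by~\eqref{asym-n}--\eqref{asym-b}) produces factors $1/(\beta\sigma)$ that tame the otherwise borderline pieces.

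With this refinement in hand, balancing the sharpened inner bound against the tail bound $O(1/s_\ast)$ at the optimal cut-off $s_\ast\sim 1/\sqrt\alpha$ delivers the $\sqrt\alpha$ scale in~\eqref{thm-alpha}, while the logarithmic excess $|\ln\alpha|$ appears as the rate at which $\int^{s_\ast}d\sigma/\sigma$ diverges on this optimal range. For~\eqref{thm-alpha2}, taking $\alpha_0=1$ is much more direct: the torsion reference vanishes, $|\tau_{c_0,\alpha}|\leq\sqrt{1-\alpha}\,\sigma$, and both curvatures $c_{c_0,\alpha}$ and $c_{c_0,1}$ carry a genuine Gaussian decay, so no oscillatory refinement is needed and the explicit formula for $\m_{c_0,1}$ from the appendix (see Remark~(e)) serves as a baseline. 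Continuity of $\alpha\mapsto\vec A^+_{c_0,\alpha}$ on the whole of $[0,1]$ then follows by combining these quantitative bounds with the classical continuous dependence of the frame on $\alpha$ on compact $s$-intervals.

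The main obstacle is the regime $\alpha\to 0^+$: a naive Gronwall estimate is $O(1)$ and completely misses the correct $\sqrt\alpha$ scale, because the coefficient difference $c_0(1-e^{-\alpha\sigma^2/4})$ is not small on the natural range of integration $\sigma\lesssim 1/\sqrt\alpha$. Recovering the correct rate requires carefully tracking the cancellations between the two profiles through the rapidly oscillating phase $\beta s^2/4$; this mechanism is also what produces the logarithmic excess $|\ln\alpha|$. By contrast, the end $\alpha\to 1^-$ is essentially tame since the oscillation frequency $\beta$ vanishes and every relevant integrand is controlled by a Gaussian.
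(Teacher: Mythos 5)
Your plan has a genuine gap precisely at the step you leave as a hope, and the two asserted rates do not come out of the bookkeeping you describe. Near $\alpha=0$: in your Duhamel formula the integrand is (up to the orthogonal factor $F_\alpha(s_\ast)$) of the form $\delta c\,(\m_{c_0,\alpha}\otimes\n_{c_0,0}-\n_{c_0,\alpha}\otimes\m_{c_0,0})+\delta\tau\,(\n_{c_0,\alpha}\otimes\b_{c_0,0}-\b_{c_0,\alpha}\otimes\n_{c_0,0})$, with $\delta c=c_0(1-e^{-\alpha\sigma^2/4})$, $\delta\tau=(1-\beta)\sigma/2$. The $1/(\beta\sigma)$ gain from integrating by parts against $\sin\vec\phi,\cos\vec\phi$ is only available for non-resonant products; but the $\alpha$-frame oscillates with phase $\approx\beta\sigma^2/4$ and the reference frame with phase $\approx\sigma^2/4+c_0^2\ln\sigma$, so the beat phase is $(1-\beta)\sigma^2/4+c_0^2\ln\sigma+O(1)$, whose quadratic part varies only by $O(\alpha)$ over the whole window $\sigma\lesssim\alpha^{-1/2}$: for these terms no power of $\sigma$ is gained. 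A concrete offender is $\delta c$ times the product of the unit-amplitude oscillation of $\n_{c_0,\alpha}$ with the $O(c_0/\sigma)$ oscillatory correction of $\m_{c_0,0}$ in \eqref{asym-m}; its naive size is $\int_{s_0}^{s_\ast}c_0\tfrac{\alpha\sigma^2}{4}\cdot\tfrac{c_0}{\sigma}\,d\sigma\sim c_0^2/8$, i.e.\ $O(1)$, not $O(\sqrt\alpha|\ln\alpha|)$, and your sketch neither identifies these beats nor exhibits the cancellation that would be needed. Moreover the announced origin of the logarithm never materializes: after one integration by parts the non-resonant pieces are already $O(\sqrt\alpha)$ with no $\int d\sigma/\sigma$ appearing, so the mechanism you describe cannot reproduce \eqref{thm-alpha} even heuristically.

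The endpoint $\alpha\to1^-$ is also not as tame as claimed: $|\tau_{c_0,\alpha}-\tau_{c_0,1}|=\beta\sigma/2$ is not integrable on $(0,\infty)$, so you must cut off; with the Gaussian tail $e^{-\alpha s_\ast^2/4}/s_\ast$ the best choice is $s_\ast\sim\sqrt{|\ln(1-\alpha)|}$, and the inner bound $\beta s_\ast^2/4$ then gives $\sqrt{1-\alpha}\,|\ln(1-\alpha)|$, a logarithm worse than \eqref{thm-alpha2} — and you explicitly forgo any refinement there. (The continuity statement on $[0,1]$ would indeed follow once quantitative bounds are in place, but they are not.) For comparison, the paper never compares the two frames: it works on the reduced equation \eqref{eq-f0}, differentiates it with respect to $\alpha$, and proves pointwise bounds for $z_\alpha,y_\alpha,h_\alpha$ by an energy/Gronwall argument (Lemma~\ref{lemma-z-alpha}); then $z_\infty(\alpha)$ is controlled by Lemma~\ref{z-cont} near $\alpha=1$ (integrating $y_\alpha$ in both $s$ and $\alpha$, the Gaussian weight absorbing the $s$-integral with no log) and by Lemma~\ref{z-cont-en-0} near $\alpha=0$ (showing $|dz_\infty/d\alpha|\le C(c_0)|\ln\alpha|/\sqrt\alpha$ from the identity \eqref{z-z-inf} evaluated at $s=1$, then integrating in $\alpha$); the theorem is then immediate from $A^+_{1}=2z_{1,\infty}-1$ and $A^+_{j}=z_{j,\infty}-1$ in \eqref{A+}. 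To salvage your route you would have to either prove the cancellation of the resonant beat terms or switch, as the paper does, to estimating the $\alpha$-derivative of the solution rather than the difference of two oscillating frames.
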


As a by-product of Theorems~\ref{thm-c-0} and~\ref{thm-alpha1-2}, we obtain the following proposition which asserts that the solutions
$\mm_{c_0,\alpha}(s,t)$ of the LLG equation found in Theorem~\ref{Theorem1} are indeed associated to a discontinuous initial data at least for certain ranges of $\alpha$ and $c_0$.
\begin{prop} \label{jump}
With the same notation as in Theorems~\ref{Theorem1} and \ref{thm-conver}, the following statements hold:
\begin{itemize}
 \item[{\it{(i)}}] For fixed $\alpha\in(0,1)$ there exists $c_0^{\ast}>0$ depending on $\alpha$ such that
 $$
 \vec{A}_{c_0,\alpha}^{+}\neq \vec{A}_{c_0,\alpha}^{-} \qquad \hbox{for all }\ c_0\in(0, c_0^{\ast}).
 $$
  \item[{\it{(ii)}}] For fixed $c_0>0$, there exists $\alpha^{\ast}_{0}>0$ small enough such that
 $$
 \vec{A}_{c_0,\alpha}^{+}\neq \vec{A}_{c_0,\alpha}^{-} \qquad \hbox{for all }\ \alpha\in(0, \alpha^{\ast}_{0}).
 $$
  \item[{\it{(iii)}}] For fixed $0<c_0\neq k\sqrt{\pi}$ with $k\in \N$, there exists $\alpha^{\ast}_{1}>0$ with $1-\alpha^{\ast}_{1}>0$ small enough such that
 $$
 \vec{A}_{c_0,\alpha}^{+}\neq \vec{A}_{c_0,\alpha}^{-} \qquad \hbox{for all }\ \alpha\in(\alpha^{\ast}_{1},1).
 $$
 \end{itemize}
\end{prop}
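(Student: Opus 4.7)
The starting observation is that by \eqref{A-},
\[
|\vec A^+_{c_0,\alpha}-\vec A^-_{c_0,\alpha}|^2 = 4\bigl((A^+_{2,c_0,\alpha})^2+(A^+_{3,c_0,\alpha})^2\bigr),
\]
so in each case it suffices to exhibit a nonzero second or third coordinate of $\vec A^+_{c_0,\alpha}$. The general strategy is to combine the quantitative estimates of Theorems~\ref{thm-c-0}--\ref{thm-alpha1-2} with explicit information on the boundary cases $\alpha=0$ and $\alpha=1$.

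For part (i), with $\alpha\in(0,1)$ fixed, I would read off the claim directly from \eqref{A2}: the right-hand side is of order $c_0^2$ as $c_0\to 0^+$ (with a constant depending on $\alpha$), while the leading term $c_0\sqrt{\pi(1+\alpha)/2}$ is of order $c_0$ with a coefficient bounded away from zero. Hence there is a threshold $c_0^\ast(\alpha)>0$ below which $A^+_{2,c_0,\alpha}\ge \tfrac{1}{2}c_0\sqrt{\pi(1+\alpha)/2}>0$, which is already enough to conclude.

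Parts (ii) and (iii) are then handled by continuity. Since Theorem~\ref{thm-alpha1-2} asserts that $\alpha\mapsto\vec A^+_{c_0,\alpha}$ is continuous on $[0,1]$ and $\vec A^-_{c_0,\alpha}$ depends continuously on $\vec A^+_{c_0,\alpha}$, it suffices to check the strict inequality $\vec A^+_{c_0,0}\neq \vec A^-_{c_0,0}$ for every $c_0>0$ (yielding (ii)) and $\vec A^+_{c_0,1}\neq \vec A^-_{c_0,1}$ under the hypothesis $c_0\neq k\sqrt\pi$ (yielding (iii)); the required neighborhoods of $\alpha$ are then automatic from openness.

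The endpoint $\alpha=1$ is the quicker of the two: then $\beta=0$, so the torsion vanishes identically, the Serret--Frenet system forces $\b_{c_0,1}\equiv(0,0,1)$ and confines $\m_{c_0,1}$ to the plane $\{m_3=0\}$, giving $\m_{c_0,1}(s)=(\cos\theta(s),\sin\theta(s),0)$ with $\theta(s)=c_0\int_0^s e^{-\sigma^2/4}\,d\sigma$. Sending $s\to\infty$ yields $\vec A^+_{c_0,1}=(\cos(c_0\sqrt\pi),\sin(c_0\sqrt\pi),0)$, so $|\vec A^+_{c_0,1}-\vec A^-_{c_0,1}|=2|\sin(c_0\sqrt\pi)|$, nonzero precisely when $c_0\notin\sqrt\pi\,\N$; this matches the explicit formula to be stated in the Appendix. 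I expect the $\alpha=0$ endpoint needed for (ii) to be the main obstacle, because the estimates \eqref{A2}--\eqref{A3} degenerate there. To treat it I would invoke the analysis of \cite{vega-gutierrez}, in which the angle between $\vec A^+_{c_0,0}$ and $\vec A^-_{c_0,0}$ is computed explicitly (equivalently, $\vec A^+_{c_0,0}\cdot \vec A^-_{c_0,0}=2e^{-c_0^2\pi/2}-1<1$ for every $c_0>0$), and then use the continuity estimate \eqref{thm-alpha} to propagate the strict inequality into a neighborhood of $\alpha=0$.
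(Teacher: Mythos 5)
Your proposal is correct and takes essentially the same route as the paper: reduce the claim to showing that the second and third components of $\vec A^+_{c_0,\alpha}$ do not both vanish, obtain part (i) from estimate \eqref{A2} of Theorem~\ref{thm-c-0} (the paper phrases the same comparison as a contradiction argument with $c_0\to 0$), and obtain parts (ii) and (iii) from the continuity in $\alpha$ given by Theorem~\ref{thm-alpha1-2} together with the explicit endpoint values \eqref{A1-alpha-0} and \eqref{A-alpha-1}. Only a cosmetic slip: since $\vec A^+_{c_0,0}\cdot\vec A^-_{c_0,0}=2(A^+_{1,c_0,0})^2-1$, the value at $\alpha=0$ is $2e^{-\pi c_0^2}-1$ rather than $2e^{-\pi c_0^2/2}-1$, which does not affect the conclusion.
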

\begin{remark}\label{remk-jump}
Based on the numerical results in Section~\ref{section-numerics}, we conjecture that
$\vec{A}_{c_0,\alpha}^{+}\neq \vec{A}_{c_0,\alpha}^{-}$ for all $\alpha\in [0,1)$ and $c_0>0$.
\end{remark}

%
We would like to point out that some of our results and their proofs combine and extend several ideas previously
introduced in \cite{vega-gutierrez} and \cite{vega-gutierrez1}.
The approach we use in the proof of the main results in this paper
is based on the integration of the Serret--Frenet system of equations
via a Riccati equation, which in turn can be reduced to the study of
a second order ordinary differential equation given by
\begin{equation} \label{key-eq}
  f''(s)+\frac{s}{2}(\alpha+i\beta) f'(s)+\frac{c_0^2}{4} e^{-\frac{\alpha s^2}{2}} f(s)=0
\end{equation}
when the curvature and torsion are given by \eqref{c-tau}.

Unlike in the undamped case, in the presence of damping
no explicit solutions are known for equation
(\ref{key-eq}) and the term containing the
exponential in the equation (\ref{key-eq}) makes it difficult to use
Fourier analysis methods to study analytically the behaviour of the
solutions to this equation.
The fundamental step in the analysis of the behaviour of the solutions of (\ref{key-eq}) consists
 in introducing new auxiliary variables $z$, $h$ and  $y$ defined by
$$
 z=|f|^2, \qquad  y=\Re (\bar f f')  \qquad {\hbox{and}}\qquad h=\Im  (\bar f f')
$$
in terms of solutions $f$ of (\ref{key-eq}), and studying the system of equations satisfied by these key quantities.
As we will see later on, these variables are the ``natural'' ones in our problem, in the sense that the components of the tangent, normal and binormal vectors
can be written in terms of these quantities. It is important to emphasize that, in order to obtain error bounds in the asymptotic analysis independent of the damping parameter $\alpha$  (and hence recover the asymptotics when $\alpha=0$ and $\alpha=1$ as particular cases), it will be fundamental to exploit the cancellations due to the oscillatory character of  $z$, $y$ and $h$.

%
%
The outline of this paper is the following.  Section~\ref{sec-self-similar} is devoted to the construction of
the family of self-similar solutions $\{ \mm_{c_0, \alpha}\}_{c_0,\alpha}$ of the LLG equation.
In Section~\ref{sec-reduction} we reduce the study of the properties of this family of self-similar solutions to that of the properties of the
solutions of the complex second order complex ODE (\ref{key-eq}).
This analysis is of independent interest. Section~\ref{sec-proof-results} contains the proofs of the main results of this
paper as a consequence of those established in Section~\ref{sec-reduction}.
In Section~\ref{section-numerics} we give provide some numerical results for $\vec{A}^+_{c_0,\alpha}$,
as a function of $\alpha\in[0,1]$ and $c_0>0$, which give some inside for the scattering problem and justify Remark~\ref{remk-jump}.
Finally, we have included the study of the self-similar solutions of the LLG equation in the case $\alpha=1$ in
Appendix.

\noindent {\bf Acknowledgements.} S.~Guti\'errez and A.~de~Laire
were supported by the British project ``Singular vortex dynamics and
nonlinear Schr\"odinger equations'' (EP/J01155X/1) funded by EPSRC.
S.~Guti\'errez was also supported by the Spanish projects
MTM2011-24054 and IT641-13.

Both authors would like to thank L.~Vega for many enlightening
conversations and for his continuous support.

\section{Self-similar solutions of the LLG equation}\label{sec-self-similar}

\noindent First we derive what we will refer to as the geometric
representation of the LLG equation. To this end, let us assume that $\mm(s,t)=\vec{X}_{s}(s,t)$ for some curve $\vec{X}(s,t)$ in $\R^3$
parametrized with respect to the arc-length with curvature $\cc(s,t)$ and torsion $\uptau(s,t)$. Then, using the Serret--Frenet
 system of equations \eqref{serret}, we have
$$
\mm_{ss}=\cc_s\nn+\cc(-\cc\nn+\uptau \bb),
$$
and thus we can rewrite \eqref{LL} as
\bq\label{der-m-t-1}
\partial_{t} \mm= \beta (\cc_s\bb -\cc\uptau \nn) +\alpha ( \cc\uptau \bb +\cc_s \nn),
\eq
in terms of intrinsic quantities $\cc$, $\uptau$ and the Serret--Frenet
trihedron $\{ \mm,\nn,\bb \}$.

We are interested in self-similar solutions of \eqref{LL} of the
form
\begin{equation}
 \label{m-tilde-1}
 \mm(s,t)=\m\left( \frac{s}{\sqrt{t}} \right)
\end{equation}
for some profile $\m:\R\longrightarrow \mathbb{S}^2$. First,
notice that due to the self-similar nature of $\mm(s,t)$ in
\eqref{m-tilde-1}, from the Serret--Frenet equations \eqref{serret}
it follows that the unitary normal and binormal vectors and the
associated curvature and torsion are self-similar and given by
\begin{equation} \label{n-b-self}
 \nn(s,t)=\n\left( \frac{s}{\sqrt{t}} \right), \qquad
 \bb(s,t)=\b\left( \frac{s}{\sqrt{t}} \right),
\end{equation}
\begin{equation}\label{c-tau-self}
\cc(s,t)=\frac{1}{\sqrt{t}}\,c\left( \frac{s}{\sqrt{t}} \right)
 \quad {\hbox{and}}\quad
\uptau(s,t)=\frac{1}{\sqrt{t}}\,\tau\left( \frac{s}{\sqrt{t}} \right).
\end{equation}
Assume that $\mm(s,t)$ is a solution of the LLG equation, or equivalently of its geometric version \eqref{der-m-t-1}. Then, from \eqref{m-tilde-1}--\eqref{c-tau-self} it follows that the
Serret--Frenet trihedron $\{ \m(\cdot),  \n(\cdot), \b(\cdot)\}$ solves
\begin{equation}
 \label{der-m-t-2}
  -\frac{s}{2} c\n= \beta(c'\b-c\tau\n) +\alpha(c\tau\b+c'\n),
\end{equation}
As a consequence,
$$
 -\frac{s}{2} c=\alpha c' -\beta c\tau
 \quad {\hbox{and}}\quad
  \beta c' +\alpha c\tau=0.
$$
Thus, we obtain
\begin{equation}
 \label{c-tau-1}
  c(s)= c_0\, e^{-\frac{\alpha s^2}{4}}
  \quad {\hbox{and}}\quad
  \tau(s)= \frac{\beta s}{2},
\end{equation}
for some positive constant $c_0$ (recall that we are assuming
w.l.o.g. that $\alpha^2+\beta^2=1$). Therefore, in view of \eqref{c-tau-self}, the curvature and
torsion associated to a self-similar solution of \eqref{LL} of the form
\eqref{m-tilde-1} are given respectively by
\begin{equation}
 \label{c-tau-self-ex}
  \cc(s,t)= \frac{c_0}{\sqrt{t}} e^{-\frac{\alpha s^2}{4t}}
  \quad {\hbox{and}}\quad
  \uptau(s,t)= \frac{\beta s}{2t}, \qquad c_0>0.
\end{equation}
Notice that given $(\cc,\uptau)$ as above, for fixed time $t>0$ one can
solve the Serret--Frenet system of equations to obtain the solution
up to a rigid motion in the space which in general may depend on
$t$. As a consequence, and in order to determine the dynamics of the
spin chain, we need to find the time evolution of the trihedron $\{
\mm(s,t), \nn(s,t), \bb(s,t)\}$ at some fixed point
$s^*\in \R$. To this end, from the above expressions of the
curvature and torsion associated to $\mm(s,t)$ and evaluating
the equation \eqref{der-m-t-1} at the point $s^*=0$, we obtain that
$\mm_t(0,t)=\vec{0}$. On the other hand, differentiating the
geometric equation \eqref{der-m-t-1} with respect to $s$, and using
the Serret--Frenet equations \eqref{serret} together with  the
compatibility condition  $\mm_{st}=\mm_{ts}$, we get the
following relation for the time evolution of the normal vector
$$
 \cc\nn_t= \beta(\cc_{ss}\bb+c^2\uptau \mm-\cc\uptau^2\bb)
 +\alpha((\cc\uptau)_s\bb-\cc\cc_s\mm+\cc_s\uptau\bb).
$$
The evaluation of the above identity at $s^*=0$ together with the
expressions for the curvature and torsion in \eqref{c-tau-self-ex}
yield $\nn_t(0,t)=\vec{0}$. The above argument shows that
$$
 \mm_t(0,t)=\vec{0}, \qquad
 \nn_t(0,t)=\vec{0}\quad {\hbox{and}}\quad
 \bb_t(0,t)=(\mm\times\nn)_t(0,t)=\vec{0}.
$$
Therefore we can assume w.l.o.g. that
$$
 \mm(0,t)=(1,0,0), \qquad
 \nn(0,t)=(0,1,0) \quad {\hbox{and}}\quad
 \bb(0,t)=(0,0,1),
$$
and in particular
\begin{equation}
 \label{IC-1}
 \m(0)=\mm(0,1)=(1,0,0), \quad
 \n(0)=\nn(0,1)=(0,1,0), \quad {\hbox{and}}\quad
 \b(0)=\bb(0,1)=(0,0,1).
\end{equation}
Given $\alpha\in[0,1]$ and $c_0>0$, from the theory of ODE's, it
follows that there exists a unique $\{ \m_{c_0,\alpha}(\cdot),
\n_{c_0,\alpha}(\cdot), \b_{c_0,\alpha}(\cdot)\}\in
\left(\mathcal{C}^{\infty}(\mathbb{R}; \mathbb{S}^{2})\right)^{3}$
solution of the Serret--Frenet equations (\ref{serret}) with
curvature and torsion (\ref{c-tau-1}) and initial conditions
(\ref{IC-1}) such that
$$
 \vec{m}_{c_0,\alpha}\perp \vec{n}_{c_0,\alpha}, \quad
 \vec{m}_{c_0,\alpha}\perp \vec{b}_{c_0,\alpha}, \quad
 \vec{n}_{c_0,\alpha}\perp \vec{b}_{c_0,\alpha}
$$
and
$$
 |\vec{m}_{c_0,\alpha}|^2=|\vec{n}_{c_0,\alpha}|^2=|\vec{b}_{c_0,\alpha}|^2=1.
$$
Define $\mm_{c_0,\alpha}(s,t)$ as
\begin{equation}\label{self-similar-1}
\mm_{c_0,\alpha}(s,t)=\vec{m}_{c_0,\alpha}\left(\frac{s}{\sqrt{t}} \right).
\end{equation}
Then, $\vec{m}_{c_0,\alpha}(\cdot,t)\in
\mathcal{C}^{\infty}\left(\R;\mathbb{S}^2\right)$ for all $t>0$, and
bearing in mind both the relations in
\eqref{n-b-self}--\eqref{c-tau-self} and the fact that the vectors
$\{\vec{m}_{c_0,\alpha}(\cdot), \vec{n}_{c_0,\alpha}(\cdot),
\vec{b}_{c_0,\alpha}(\cdot)\}$ satisfy the identity
(\ref{der-m-t-2}), a straightforward calculation shows that
$\m_{c_0, \alpha}(\cdot,t)$ is a regular
$\mathcal{C}^\infty(\R;\mathbb{S}^2)$-solution of the LLG equation
for all $t>0$. Notice that the case $c_0=0$ yields the constant solution $\m_{0,\alpha}(s,t)=(1,0,0)$. Therefore in what follows we will assume that $c_0>0$.

The rest of the paper is devoted to establish analytical  properties of
the solutions $\{\mm_{c_0,\alpha}(s,t)\}_{c_0,\alpha}$  defined by
(\ref{self-similar-1}) for fixed $\alpha\in[0,1]$ and $c_0>0$. As  already mentioned, due to the self-similar nature of
these solutions,  it suffices to study the properties of the associated profile
$\vec{m}_{c_0,\alpha}(\cdot)$ or, equivalently, of the solution $\{\vec{m}_{c_0,\alpha}, \vec{n}_{c_0,\alpha}, \vec{b}_{c_0,\alpha} \}$ of the Serret--Frenet system (\ref{serret}) with curvature and torsion given
by (\ref{c-tau-1}) and initial conditions (\ref{IC-1}).
As we will continue to see, the analysis of the profile solution $\{\vec{m}_{c_0,\alpha}, \vec{n}_{c_0,\alpha}, \vec{b}_{c_0,\alpha} \}$ can be reduced to the study of the properties of the solutions of a certain second order complex differential equation.

\section{Integration of the Serret--Frenet system}
\label{sec-reduction}

\subsection{Reduction to the study of a second order ODE}\label{sub-sec-reduction}
Classical changes of variables from the differential geometry of
curves allow us to reduce  the nine equations in the Serret--Frenet
system into three complex-valued second order equations (see
\cite{darboux,struik,lamb}). Theses changes of variables are related
to stereographic projection and  this approach was also used in
\cite{vega-gutierrez}. However, their choice of  stereographic
projection has a singularity at the origin, which leads to an
indetermination of the initial conditions of some of the new
variables. For this reason, we consider in the following lemma a
stereographic projection that is compatible with the initial
conditions \eqref{IC-1}. Although the proof of the lemma below is a slight modification of that in
\cite[Subsections 2.12 and 7.3]{lamb},
 we have included its proof here both for the sake of completeness and to clarify to the unfamiliar reader how the integration of the Frenet equations can be reduced to the study of a second order differential equation.
\begin{lemma}\label{def-f}
Let $\vec{m}=(m_j(s))_{j=1}^{3}$, $\vec{n}=(n_j(s))_{j=1}^{3}$ and
$\vec{b}=(b_j(s))_{j=1}^{3}$ be a solution of the Serret--Frenet
equations \eqref{serret} with positive curvature $c$ and torsion $\tau$.
Then, for each $j\in\{1,2,3\}$ the function \bqq
f_j(s)=e^{\frac12\int_0^sc(\sigma)\eta_j(\sigma)\,d\sigma}, \quad
\text{ with }\quad \eta_j(s)=\frac{(n_j(s)+i b_j(s))}{1+m_j(s)},
\eqq solves the equation \bq\label{eq-f-j}
f_j''(s)+\left(i\tau(s)-\frac{c'(s)}{c(s)}\right)f_j'(s)+\frac{c^2(s)}{4}f_j(s)=0,
\eq with initial conditions \bqq f_j(0)=1,\quad
f'_j(0)=\frac{c(0)(n_j(0)+i b_j(0))}{2(1+m_j(0))}. \eqq Moreover,
the coordinates of  $\vec{m}$, $\vec{n}$ and $\vec{b}$ are given in
terms of $f_j$ and $f'_{j}$ by \bq\label{inverse}
m_j(s)=2\left(1+\frac4{c(s)^2}\left|\frac{f_j'(s)}{f_j(s)}\right|^2\right)^{-1}-1,
\quad
n_j(s)+ib_j(s)=\frac{4f'_j(s)}{c(s)f_j(s)}\left(1+\frac4{c(s)^2}\left|\frac{f_j'(s)}{f_j(s)}\right|^2\right)^{-1}.
\eq
The above relations are valid at least as long as $m_j>-1$ and $\abs{f_j}>0$.
\end{lemma}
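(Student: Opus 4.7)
The plan is to exploit the classical stereographic trick: since $(m_j,n_j,b_j)\in\S^2$, the complex quantity $\eta_j=(n_j+ib_j)/(1+m_j)$ is its stereographic projection from $(-1,0,0)$, and one expects $\eta_j$ to satisfy a Riccati equation that linearizes under a standard substitution. Concretely, I would (a) derive the Riccati equation for $\eta_j$ from the Serret--Frenet system, (b) linearize it via $f_j=\exp(\tfrac12\int_0^s c\,\eta_j)$ to obtain \eqref{eq-f-j}, (c) read off the initial values of $f_j$ and $f_j'$, and (d) invert the projection to recover \eqref{inverse}.

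For step~(a), I would differentiate the defining ratio using $m_j'=cn_j$, $n_j'=-cm_j+\tau b_j$, $b_j'=-\tau n_j$, which give $(n_j+ib_j)'=-cm_j-i\tau(n_j+ib_j)$ and hence
\[
\eta_j'=-\frac{cm_j}{1+m_j}-i\tau\,\eta_j-\frac{cn_j\,\eta_j}{1+m_j}.
\]
The target is the Riccati equation $\eta_j'=-\tfrac{c}{2}(1+\eta_j^2)-i\tau\,\eta_j$. Matching the two expressions reduces to the algebraic identity $2(m_j+n_j\eta_j)=(1+m_j)(1+\eta_j^2)$, which I would verify using $m_j^2+n_j^2+b_j^2=1$ together with the key simplification $\eta_j(n_j-ib_j)=(n_j^2+b_j^2)/(1+m_j)=1-m_j$.

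For step~(b), by construction $f_j'/f_j=\tfrac{c}{2}\eta_j$, so differentiating once more gives $f_j''/f_j=\tfrac{c'\eta_j}{2}+\tfrac{c\eta_j'}{2}+\tfrac{c^2\eta_j^2}{4}$; substituting the Riccati equation and then replacing $\eta_j$ by $2f_j'/(cf_j)$ on the right yields \eqref{eq-f-j}. The initial conditions in~(c) are immediate from $f_j(0)=1$ and $f_j'(0)=\tfrac{c(0)}{2}\eta_j(0)$. For~(d), the sphere constraint yields $|\eta_j|^2=(1-m_j)/(1+m_j)$, so $m_j=2/(1+|\eta_j|^2)-1$ and $n_j+ib_j=\eta_j(1+m_j)=2\eta_j/(1+|\eta_j|^2)$; substituting $\eta_j=2f_j'/(cf_j)$ produces \eqref{inverse}. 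These formulas are valid exactly as long as the stereographic projection is nonsingular ($m_j>-1$) and $f_j$ does not vanish, matching the stated domain of validity.

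The main obstacle is the algebraic reduction in step~(a): the cross term $cn_j\eta_j/(1+m_j)$ must be combined with $cm_j/(1+m_j)$ using the sphere constraint in complex form, and the identity $\eta_j(n_j-ib_j)=1-m_j$ (which rests on $n_j^2+b_j^2=1-m_j^2$) is the linchpin. Once this is in hand the remaining steps are mechanical, and both \eqref{eq-f-j} and \eqref{inverse} drop out after routine substitution.
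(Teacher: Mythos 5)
Your proposal is correct and follows essentially the same route as the paper: stereographic projection, the Riccati equation $\eta_j'+i\tau\eta_j+\tfrac{c}{2}(\eta_j^2+1)=0$, linearization via $f_j=\exp\bigl(\tfrac12\int_0^s c\,\eta_j\bigr)$, and inversion of the projection using $|\eta_j|^2=(1-m_j)/(1+m_j)$. The only cosmetic difference is that you differentiate $\eta_j$ directly and close the algebra with the identity $\eta_j(n_j-ib_j)=1-m_j$, whereas the paper first introduces the rotated variables $N=(n_j+ib_j)e^{i\int_0^s\tau}$ and $\varphi=N/(1+m_j)$ (where the same identity appears as $\varphi\bar N=1-m_j$) before removing the phase; the two computations are equivalent.
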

\begin{proof}
For simplicity, we omit the index $j$. The proof relies on several
transformations that are rather standard in the study of curves.
First we define the complex function
\begin{equation}
\label{N}
N=(n+ib)e^{i\int_0^s \tau(\sigma)\,d\sigma}.
\end{equation}
Then $N'=i\tau N+(n'+ib')e^{i\int_0^s \tau(\sigma)\,d\sigma}$. On
the other hand, the  Serret--Frenet equations imply that
$$n'+ib'=-cm-i\tau Ne^{-i\int_0^s \tau(\sigma)\,d\sigma}.$$
Therefore, setting
$$\psi=c e^{i\int_0^s \tau(\sigma)\,d\sigma},$$
we get \bq\label{der-N} N'=-\psi m. \eq Using again the
Serret--Frenet equations, we also obtain \bq\label{der-m}
m'=\frac12(\overline \psi N+\psi \overline N). \eq Let us consider
now the  auxiliary function
\begin{equation}
\label{phi}
\varphi=
\frac{N}{1+m}.
\end{equation}
Differentiating and using \eqref{der-N}, \eqref{der-m} and
\eqref{phi}
\begin{align*}
  \varphi'
  &=\frac{N'}{1+m}-\frac{Nm'}{(1+m)^2}
  \nonumber \\
  &=\frac{N'}{1+m}-\frac{\varphi m'}{1+m}
  \nonumber \\
  &=-\frac{\varphi^2 \overline \psi}{2}-\frac{\psi}{2(1+m)}(2m+\varphi \overline N).
\end{align*}
Noticing that we can recast the relation  $m^2+n^2+b^2=1$ as
$N\overline N=(1-m)(1+m)$ and recalling the definition of $\varphi $
in \eqref{phi}, we have $\varphi \overline N=1-m$, so that
\begin{equation} \label{eq-phi}
\varphi' +\frac{\varphi^2 \overline \psi}{2}+\frac{\psi}{2}=0.
\end{equation}
Finally, define the stereographic projection of $(m,n,b)$ by
\begin{equation}
 \label{eta}
\eta=\frac{n+ib}{1+m}.
\end{equation}
Observe that  from  the definitions of $N$ and $\varphi$,
respectively in \eqref{N} and \eqref{phi}, we can rewrite $\eta$  as
$$
  \eta=\varphi e^{-i\int_{0}^{s} \tau(\sigma)\, d\sigma},
$$ and from \eqref{eq-phi} it follows that $\eta$ solves the Riccati equation
\begin{equation}
 \label{eq-eta}
 \eta'+i\tau \eta+\frac{c}{2}(\eta^2+1)=0,
\end{equation}
(recall that $\psi=ce^{i\int_{0}^{s}\tau(\sigma)\, d\sigma}$).
Finally, setting
\begin{equation}
 \label{f}
 f(s)=e^{\frac12\int_0^sc(\sigma)\eta(\sigma)\,d\sigma},
\end{equation}
we get \bq\label{der-eta} \eta=\frac{2f'}{c f} \eq and
equation \eqref{eq-f-j} follows from \eqref{eq-eta}. The initial
conditions are an immediate consequence of the definition of $\eta$ and
$f$ in \eqref{eta} and \eqref{f}.

A straightforward calculation shows that the inverse transformation
of the stereographic projection is \bqq
m=\frac{1-\abs{\eta}^2}{1+\abs{\eta}^2}, \quad
n=\frac{2\Re\eta}{1+\abs{\eta}^2}, \quad  b=\frac{2\Im
\eta}{1+\abs{\eta}^2}, \eqq so that we obtain \eqref{inverse} using
\eqref{der-eta} and the above identities.
\end{proof}
Going back to our problem, Lemma~\ref{def-f} reduces the analysis of the solution
$\{\vec{m},\vec{n},\vec{b}\}$ of the
Serret--Frenet system (\ref{serret}) with curvature and torsion given
by (\ref{c-tau-1}) and initial conditions (\ref{IC-1}) to the study
of the second order differential equation
\begin{equation}
\label{eq-f0}
f''(s)+\frac{s}2(\alpha+i\beta)f'(s)+\frac{c_0^2}{4} e^{-\alpha s^2/2}f(s)=0,
\end{equation}
with three initial conditions:
For $(m_1,n_1,b_1)=(1,0,0)$ the associated initial condition for $f_1$ is
\bq\label{ic1} f_1(0)=1, \quad f'_1(0)=0, \eq
for $(m_2,n_2,b_2)=(0,1,0)$  is
\bq\label{ic2} f_2(0)=1, \quad f'_2(0)=\frac{c_0}{2}, \eq
and for $(m_3,n_3,b_3)=(0,0,1)$  is
\bq\label{ic3} f_3(0)=1, \quad  f'_3(0)=\frac{ic_0}{2}. \eq
It is important to notice that, by multiplying \eqref{eq-f0} by
$\bar f'$ and taking the real part, it is easy to see
that
$$
 \frac{d\ }{ds}\left[ \frac{1}{2}\left( e^{\frac{\alpha s^2}{2}}|f'|^2+\frac{c_0^2}{4}|f|^2 \right)  \right]=0.
 $$
Thus,
\begin{equation}
 \label{energy-1}
 E(s):\, = \frac{1}{2}\left( e^{\frac{\alpha s^2}{2}}|f'|^2+\frac{c_0^2}{4}|f|^2 \right) =E_0,
 \qquad \forall \, s\in \mathbb{R},
\end{equation}
with $E_0$ a constant defined by the value of $E(s)$ at some point
$s_0\in\mathbb{R}$. The conservation of the energy $E(s)$ allows us
to simplify the expressions of $m_j$, $n_j$ and $b_j$ for $j\in\{1,2,3\}$ in the formulae \eqref{inverse} in  terms of the
solution $f_j$ to \eqref{eq-f0} associated to the initial conditions
\eqref{ic1}--\eqref{ic3}.

Indeed, on the one hand notice that the energies associated to the
initial conditions \eqref{ic1}--\eqref{ic3} are respectively
\begin{equation}
 \label{energy-i}
   E_{0,1}=\frac{c_0^2}{8}, \qquad E_{0,2}=\frac{c_0^2}{4} \qquad {\hbox{and}}
   \qquad E_{0,3}=\frac{c_0^2}{4}.
\end{equation}
On the other hand, from \eqref{energy-1}, it follows that
$$
\left( 1+\frac{4}{c_0^2 e^{- \frac{\alpha s^2}{2}}} \frac{|f'_j|^2(s)}{|f_j|^2(s)}   \right)^{-1}=
\frac{c_0^2}{8E_{0,j}} |f_j|^2(s), \qquad j\in\{1,2,3\}.
$$
Therefore, from \eqref{energy-i}, the above identity and formulae
\eqref{inverse} in Lemma~\ref{def-f}, we conclude that
\begin{align}
m_1(s)&=2\abs{f_1(s)}^2-1, \qquad
n_1(s)+ib_1(s)=\frac{4}{c_0}e^{\alpha s^2/4}\bar f_1(s) f'_1(s),
 \label{m-1}\\
m_j(s)&=\abs{f_j(s)}^2-1,  \qquad
n_j(s)+ib_j(s)=\frac{2}{c_0}e^{\alpha s^2/4}\bar f_j(s) f'_j(s), \quad j\in\{2,3\}.
 \label{m-2}
\end{align}
The above identities give the expressions of the tangent, normal and
binormal vectors in terms of the solutions $\{f_j\}_{j=1}^{3}$ of
the second order differential equation (\ref{eq-f0}) associated to
the initial conditions (\ref{ic1})--(\ref{ic3}).

By Lemma~\ref{def-f}, the formulae \eqref{m-1} and \eqref{m-2}  are valid as long as ${m_j}>-1$, which is equivalent to the condition $\abs{f_j}\neq 0$.
As shown in Appendix, for $\alpha=1$ there is $\tilde s>0$ such that ${m_j(\tilde s)}=-1$ and then  \eqref{m-1} and \eqref{m-2} are (a priori) valid just in a bounded interval.
However, the trihedron  $\{\m,\n,\b\}$ is defined globally and $f_j$ can also be extended globally as the solution of the linear equation \eqref{eq-f0}.
Then, it is simple to verify that the functions given by the l.h.s. of formulae \eqref{m-1} and \eqref{m-2} satisfy the Serret--Frenet system and hence, by the uniqueness
of the solution, the formulae \eqref{m-1} and \eqref{m-2}  are valid for all $s\in \R$.

\subsection{The second-order equation. Asymptotics}\label{second}

In this section we study the properties of the complex-valued
equation
\bq\label{eq-f}
f''(s)+\frac{s}2(\alpha+i\beta)f'(s)+\frac{c_0^2}{4}f(s) e^{-\alpha
s^2/2}=0,
\eq
for fixed $c_0>0$, $\alpha \in[0,1)$, $\beta>0$ such that $\alpha^2+\beta^2=1$.
We begin noticing that in the case $\alpha=0$, the solution can be
written explicitly in terms of parabolic cylinder functions or
confluent hypergeometric functions (see \cite{abram}). Another
analytical approach using Fourier analysis techniques has been taken
in \cite{vega-gutierrez}, leading to the asymptotics
\bq\label{f-alpha-0}
f(s)=C_1e^{i(c_0^2/2)\ln(s)}+C_2\frac{e^{-is^2/4}}{s}e^{-i(c_0^2/2)\ln(s)}+O(1/s^2),
\eq
as $s\to\infty$, where the constants $C_1$, $C_2$ and $O(1/s^2)$
depend on the initial conditions and $c_0$.

For $\alpha=1$, equation \eqref{eq-f} can be also solved explicitly
and the solution is given by \bqq
f(s)=\frac{2f'(0)}{c_0}\sin\left(\frac{c_0}{2}\int_0^s
e^{-\sigma^2/4}\,d\sigma\right)+ f(0)\cos\left(\frac{c_0}{2}\int_0^s
e^{-\sigma^2/4}\,d\sigma\right).
\eqq
In the case $\alpha\in(0,1)$, one cannot compute the solutions of
\eqref{eq-f} in terms of known functions and we will follow a more
analytical analysis. In contrast with the situation when $\alpha=0$,
it is far from evident to use Fourier analysis to study (\ref{eq-f})
when $\alpha>0$.

For the rest of this section we will assume that $\alpha\in[0,1)$.
In addition, we will also assume that $s>0$ and we
will develop the asymptotic analysis necessary to establish part
\ref{asymp} of Theorem~\ref{thm-conver}.
At this point, it is important to  recall the expressions given in
(\ref{m-1})--(\ref{m-2}) for the coordinates of the tangent, normal
and binormal vectors  associated to our family of solutions of the
LLG equation in terms $f$. Bearing this
in mind, we observe that the study of the asymptotic behaviour of
these vectors are dictated by the asymptotic behaviour of the
variables
\begin{equation}
 \label{def-z}
 z=|f|^2, \quad y=\Re(\bar f f'),
 \quad {\hbox{and}}\quad h=\Im(\bar f f')
\end{equation}
associated to the solution $f$ of (\ref{eq-f}).

As explained in the remark (a) after Theorem~\ref{thm-conver}, we need to work with remainder terms
that are independent of $\alpha$. To this aim, we proceed in two steps: first we found uniform estimates for $\alpha\in[0,1/2]$
in Propositions~\ref{cotas} and~\ref{prop-asymp}, then we treat the case $\alpha\in[1/2,1)$ in Lemma~\ref{lema-sin-beta}.
In Subsection~\ref{subsec-dependence} we provide some continuity results that allows us to take $\alpha\to1^-$ and give the full statement
in Corollary~\ref{cor-asymp-bis}. Finally, notice that these asymptotics lead to the asymptotics for the original equation \eqref{eq-f}
(see Remark~\ref{rem-asymp}).

We begin our analysis by establishing the following:

\begin{prop}\label{cotas}
Let $c_0>0$,  $\alpha\in[0,1)$, $\beta>0$ such that
$\alpha^2+\beta^2=1$, and $f$ be a solution of (\ref{eq-f}).
Define $z$, $y$ and $h$ as $z=|f|^2$ and $y+ih=\bar f f'$. Then
\begin{enumerate}
\item[(i)] There exists $E_0\geq 0$ such that the identity
\begin{equation*}
 \frac{1}{2}\left( e^{\alpha\frac{s^2}{2}}|f'|^2+\frac{c_0^2}{4}|f|^2 \right) =E_0
\end{equation*}
holds true for all $s\in \R$. In particular, $f$, $f'$, $z$, $y$
and $h$ are bounded functions. Moreover, for all $s\in \R$
\begin{align}
&\abs{f(s)}\leq \frac{\sqrt{8E_0}}{c_0}, \quad \abs{f'(s)}\leq \sqrt{2E_0}\,  e^{-\alpha s^2/4},  \label{est-f} \\
&\abs{z(s)}\leq \frac{8E_0}{c_0^2}\qquad {\hbox{and}}\qquad \abs{h(s)}+\abs{y(s)}\leq \frac{8E_0}{c_0} e^{-\alpha s^2/4}.  \label{est-y}
\end{align}
\item[(ii)] The limit
$$
z_\infty :\,=\lim_{s\to \infty}z(s)
$$
exists.
\item[(iii)]
Let $\gamma:=2E_0-c_0^2z_{\infty}/2$ and $s_0=4\sqrt{8+c_0^2}$.
  For all $s\geq s_0$, we have
 \bq\label{z-z-inf-s3} z(s)-z_\infty=-\frac{4}{s}(\alpha y+\beta
 h)-\frac{4\gamma}{s^2}e^{-\alpha s^2/2}+R_0(s),
\eq where \begin{equation}\label{cota-R-0bis} \abs{R_0(s)}\leq
C(E_0,c_0)\frac{e^{-\alpha s^2/4}}{s^3}.
\end{equation}
\end{enumerate}
\end{prop}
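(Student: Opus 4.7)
The central observation is that the second-order equation \eqref{eq-f} is equivalent to the linear complex first-order ODE
\begin{equation*}
q'+\tfrac{s}{2}(\alpha+i\beta)q=g\, e^{-\alpha s^2/2},\qquad g:=2E_0-\tfrac{c_0^2}{2}z,
\end{equation*}
for $q:=\bar f f'=y+ih$, coupled with $z'=2y$. This reformulation will drive all three parts. For (i), I would multiply \eqref{eq-f} by $\bar f'$ and take the real part, producing the total-derivative identity $\tfrac{d}{ds}\bigl[\tfrac{1}{2}e^{\alpha s^2/2}|f'|^2+\tfrac{c_0^2}{8}|f|^2\bigr]=0$, from which the bracket is a constant $E_0$ on $\R$. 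The four pointwise bounds then follow by isolating $|f|$ and $|f'|$ in this conservation law and using $|y|+|h|\le 2|f||f'|$.

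For (ii), I would solve the first-order ODE above for $q$ to obtain
\begin{equation*}
y=\tfrac{2\alpha\, g\, e^{-\alpha s^2/2}}{s}-\tfrac{2w'(s)}{s},\qquad w:=\alpha y+\beta h,
\end{equation*}
integrate $z'=2y$ on $(s,T)$, and perform one integration by parts on the $w'/\sigma$ piece. The boundary contribution at $T$ vanishes as $T\to\infty$ (by (i), $|w|$ is bounded, so $w(T)/T\to 0$), and the remaining integrals converge absolutely because $|w|/\sigma^2$ is integrable at infinity and $\alpha e^{-\alpha\sigma^2/2}/\sigma$ integrates explicitly. Hence $z_\infty:=\lim_{s\to\infty}z(s)$ exists.

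For (iii), I would re-use $z(s)-z_\infty=-\int_s^\infty 2y\,d\sigma$ but extract more terms. The same IBP on $w'/\sigma$ yields the leading $-4w(s)/s$; writing $g=\gamma+(g-\gamma)$ and integrating the factor $\alpha\sigma e^{-\alpha\sigma^2/2}$ against $1/\sigma^2$ by parts produces the second leading term $-4\gamma e^{-\alpha s^2/2}/s^2$. What remains is
\begin{equation*}
R_0=8\gamma\int_s^\infty\!\!\frac{e^{-\alpha\sigma^2/2}}{\sigma^3}\,d\sigma-4\alpha\int_s^\infty\!\!\frac{(g-\gamma)e^{-\alpha\sigma^2/2}}{\sigma}\,d\sigma+\int_s^\infty\!\!\frac{4w(\sigma)}{\sigma^2}\,d\sigma.
\end{equation*}

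The hard part will be the uniform bound $|R_0|\le C(E_0,c_0)\,e^{-\alpha s^2/4}/s^3$ for $\alpha\in[0,1)$: at $\alpha=0$ the first and third integrals are only $O(1/s^2)$ by naive estimation, and iterated IBP picks up factors of $1/\alpha$. My plan is to substitute the representation $q=2(\alpha-i\beta)(g\, e^{-\alpha\sigma^2/2}-q')/\sigma$ into $\int_s^\infty 4w/\sigma^2\,d\sigma$ (using $w=\Re((\alpha-i\beta)q)$) and then integrate the resulting $q'/\sigma^3$ term by parts once more. The boundary term produces $8v(s)/s^3$ with $v:=\Re((\alpha-i\beta)^2q)$, while the algebraic identity $1+(\alpha^2-\beta^2)=2\alpha^2$ makes the $\gamma$-portion of the new exponential integral combine with the standing $8\gamma\int_s^\infty e^{-\alpha\sigma^2/2}/\sigma^3\,d\sigma$ into $16\alpha^2\gamma\int_s^\infty e^{-\alpha\sigma^2/2}/\sigma^3\,d\sigma$, which is $O(e^{-\alpha s^2/4}/s^4)$ thanks to the elementary inequality $\alpha e^{-\alpha s^2/4}\le 4/(es^2)$. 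The surviving pieces—$8v/s^3$, $\int_s^\infty v/\sigma^4\,d\sigma$, and the $(g-\gamma)$-integrals—are each $O(e^{-\alpha s^2/4}/s^3)$ after a short bootstrap that first uses the leading $-4w/s$ to prove $|z-z_\infty|\le C e^{-\alpha s^2/4}/s$.
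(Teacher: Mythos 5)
Your proposal is correct, and it shares the paper's overall skeleton — the energy identity from multiplying by $\bar f'$, reduction to a first-order system for $z,y,h$, extraction of the leading terms $-\tfrac4s(\alpha y+\beta h)-\tfrac{4\gamma}{s^2}e^{-\alpha s^2/2}$ by integration by parts, and a two-step sup-norm bootstrap on $[t,\infty)$ that first gives $|z-z_\infty|\le C e^{-\alpha s^2/4}/s$ and then the $R_0$ bound — but the algebraic bookkeeping is genuinely different. For (ii), the paper integrates the $h'$-equation, divides by $\beta$, and passes through the Ces\`aro mean $\tfrac1s\int_0^s z$; your combination $w=\alpha y+\beta h$, with $w'=-\tfrac s2 y+\alpha g e^{-\alpha s^2/2}$, avoids any $1/\beta$ and is slightly cleaner. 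For (iii), the paper builds its exact identity by expressing $h$ through $y'$ and $z'$, multiplying by $\beta^2$ and using $\alpha^2+\beta^2=1$; uniformity in $\alpha$ is then automatic because the dangerous integrals only occur in the exact combinations $\tfrac{\alpha}{\sigma}+\tfrac{2}{\sigma^3}$ and $\tfrac{\alpha}{\sigma^3}+\tfrac{6}{\sigma^5}$, which integrate in closed form. You instead keep the complex variable $q=\bar f f'$, use $w=\Re\big((\alpha-i\beta)q\big)$ and the ODE in the form $q=\tfrac{2(\alpha-i\beta)}{\sigma}\big(g e^{-\alpha\sigma^2/2}-q'\big)$ to re-substitute into $\int_s^\infty 4w/\sigma^2$, and the cancellation $1+(\alpha^2-\beta^2)=2\alpha^2$ turns the problematic $8\gamma\int_s^\infty e^{-\alpha\sigma^2/2}\sigma^{-3}$ into the harmless $16\alpha^2\gamma\int_s^\infty e^{-\alpha\sigma^2/2}\sigma^{-3}=O(e^{-\alpha s^2/4}/s^4)$. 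I checked this algebra (the formula for $w'$, the rotation identity, the coefficient count) and it is consistent; your surviving terms are indeed $O(e^{-\alpha s^2/4}/s^3)$ uniformly in $\alpha$, with the caveat that only the $(g-\gamma)$-integrals actually need the bootstrap (the $v$-terms are controlled directly by the pointwise bound $|q|\le C(E_0,c_0)e^{-\alpha s^2/4}$), and the bootstrap closes at $s_0=4\sqrt{8+c_0^2}$ since the $(z-z_\infty)$-coefficients are $O(c_0^2/s^2)$. In short: your route buys a $1/\beta$-free derivation and a compact mechanism for the $\alpha$-uniform cancellation; the paper's buys a single exact identity in which that uniformity is visible term by term.
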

\begin{proof}
Part {\it{(i)}} is just the conservation of energy proved in \eqref{energy-1}.
Next, using the conservation law in part {\it{(i)}}, we obtain that
the variables $\{z,y,h\}$ solve the first-order real system
\begin{align}
 z'&=2y,\label{z}\\
 y'&=\beta\frac{s}{2} h-\alpha\frac{s}{2}y+e^{-\alpha s^2/2}\left(2E_0-\frac{c_0^2}{2} z\right),\label{y}\\
 h'&=-\beta \frac{s}{2} y-\alpha\frac{s}{2}h\label{h}.
\end{align}
To show {\it{(ii)}}, plugging  \eqref{z} into \eqref{h} and
integrating from $0$ to some $s>0$  we obtain \bq\label{z-int}
z(s)-\frac1{s}\int_0^s z(\sigma)\,d\sigma =-\frac{4}{\beta
s}\left(h(s)-h(0)+\frac{\alpha}{2}\int_0^s \sigma h(\sigma)\,d\sigma
\right). \eq Also, using the above identity, \bq\label{der-int}
\frac{d}{ds}\left(\frac1s \int_0^s z(\sigma)\,d\sigma
\right)=-\frac{4}{\beta s^2}\left(h(s)-h(0)+\frac{\alpha}{2}\int_0^s
\sigma h(\sigma)\,d\sigma \right). \eq Now, since from part
{\it{(i)}} $|h(s)|\leq \frac{8E_0}{c_0}\, e^{-\alpha s^2/4}$, both
$h$ and $\alpha\int_{0}^{s}\sigma h(\sigma)\, d\sigma$ are bounded
functions, thus from (\ref{der-int}) it follows that the limit of
$\frac1s \int_0^s z$ exists, as $s\to\infty$. Hence \eqref{z-int}
and previous observations conclude that the limit
$z_\infty:=\lim_{s\to \infty}z(s)$ exists and furthermore
\begin{equation}\label{equal-lim}
z_\infty:=\lim_{s\to \infty}z(s)=\lim_{s\to \infty}\frac1{s}\int_0^s
z(\sigma).
\end{equation}

We continue to prove {\it{(iii)}}. Integrating \eqref{der-int}
between $s>0$ and $+\infty$ and using integration by parts, we
obtain
\begin{equation}
 \label{z-int-1}
  z_{\infty}-\frac{1}{s}\int_{0}^{s} z(\sigma)\, d\sigma=
  -\frac{4}{\beta}\int_{s}^{\infty} \frac{h(\sigma)}{\sigma^2}\, d\sigma+
  \frac{4}{\beta} \frac{h(0)}{s}-
  \frac{2\alpha}{\beta}\left[
  \frac{1}{s}\int_{0}^{s}\sigma h(\sigma)\, d\sigma +\int_{s}^{\infty} h(\sigma)\, d\sigma
  \right].
\end{equation}
From (\ref{z-int}) and (\ref{z-int-1}), we get \bq\label{z-z-infty}
z(s)-z_\infty=-\frac{4}{\beta}\frac{h(s)}{s}+\frac{2\alpha}{\beta}\int_s^\infty
h(\sigma)\,d\sigma+\frac{4}{\beta}\int_s^\infty
\frac{h(\sigma)}{\sigma^2}. \eq
In order to compute the integrals in \eqref{z-z-infty},  using
\eqref{z} and \eqref{y}, we write \bqq h=\frac{2}{\beta
}\left(\frac{y'}{s}+\frac{\alpha}{4}z'-\frac{2E_0}{s} e^{-\alpha
s^2/2}+\frac{c_0^2}{2s}ze^{-\alpha s^2/2}\right). \eqq Then,
integrating by parts and using the bound for $y$ in (\ref{est-y}),
\begin{equation}
 \label{int-h}
 \int_s^\infty h(\sigma)=\frac{2}{\beta}\left(-\frac{y}{s}+\int_s^\infty\frac{y}{\sigma^2}
 +\frac{\alpha}{4}(z_\infty-z)
 -2E_0\int_s^\infty \frac{e^{-\alpha \sigma^2/2}}{\sigma}+\frac{c_0^2}{2}\int_s^\infty
 \frac{z}{\sigma}e^{-\alpha \sigma^2/2}
\right).
\end{equation}
Also, from (\ref{z}) and (\ref{z-z-infty}), we obtain
\begin{equation}
 \label{int-h2}
\int_s^\infty \frac{h(\sigma)}{\sigma^2}=\frac{2}{\beta }\left(
\int_s^\infty \frac{y'}{\sigma^3}+
\frac{\alpha}{2}\int_s^\infty \frac{y}{\sigma^2}
-2E_0\int_s^\infty \frac{e^{-\alpha \sigma^2/2}}{\sigma^3}+
\frac{c_0^2}{2} \int_s^\infty \frac{z}{\sigma^3}e^{-\alpha \sigma^2/2}
\right).
\end{equation}
Multiplying  \eqref{z-z-infty} by $\beta^2$, using \eqref{int-h},
\eqref{int-h2} and the identity
$$
 \alpha \int_s^\infty
 \frac{e^{-\alpha \sigma^2/2}}{\sigma^n}=\frac{e^{-\alpha
 s^2/2}}{s^{n+1}}-(n+1)\int_s^\infty \frac{e^{-\alpha
 \sigma^2/2}}{\sigma^{n+2}}, \quad \text{for all }\alpha\geq 0, \
 n\geq 1,
$$
we conclude that
\begin{align}\nonumber
 (\alpha^2+\beta^2)(z-z_\infty)
 =&
 -\frac{4}{s}(\alpha y+\beta
 h)-\frac{8E_0}{s^2}e^{-\alpha s^2/2}
     \\
 &+
 8\alpha \int_s^\infty\frac{y}{\sigma^2}+
 8\int_s^\infty\frac{y'}{\sigma^3}+2c^2_0\int_s^\infty e^{-\alpha
 \sigma^2/2}z\left(\frac{\alpha}\sigma+\frac2{\sigma^3}\right).
  \label{extra-1}
\end{align}
Finally, using (\ref{z}) and the boundedness of $z$ and $y$, an
integration by parts argument shows that
\begin{equation}\label{extra-2}
 8\alpha\int_{s}^{\infty} \frac{y}{\sigma^2}+8\int_{s}^{\infty}\frac{y'}{\sigma^3}=
 -4\alpha\frac{z}{s^2}-8\frac{y}{s^3}
 -12\frac{z}{s^4}+8\int_{s}^{\infty}z\left(\frac{\alpha}{\sigma^3}-\frac{6}{\sigma^5}   \right).
\end{equation}
Bearing in mind that $\alpha^2+\beta^2=1$, from (\ref{extra-1}) and
(\ref{extra-2}), we obtain the following identity
\bq
\begin{split}\label{z-z-inf}
z-z_\infty=&-\frac{4}{s}(\alpha y+\beta
h)-\frac{8E_0}{s^2}e^{-\alpha s^2/2} -4\alpha
\frac{z}{s^2}-8\frac{y}{s^3}
-12\frac{z}{s^4}+8\int_s^\infty z\left(\frac{\alpha }{\sigma^3}+\frac{6}{\sigma^5}\right)\,d\sigma \\
&+2c^2_0\int_s^\infty e^{-\alpha
\sigma^2/2}z\left(\frac{\alpha}\sigma+\frac2{\sigma^3}\right)\,d\sigma,
\end{split}
\eq for all $s>0$. In order to prove {\it{(iii)}}, we first write
$z=z-z_\infty+z_\infty$ and observe that
\begin{align*}
8\alpha \int_s^\infty \frac{z }{\sigma^3}&=8\alpha \int_s^\infty \frac{z-z_\infty }{\sigma^3}+\frac{4\alpha z_\infty}{s^2},\\
 \int_s^\infty \frac{z }{\sigma^5}&= \int_s^\infty \frac{z-z_\infty }{\sigma^5}+\frac{ z_\infty}{4s^4} \qquad {\hbox{and}}\\
\int_s^\infty e^{-\alpha \sigma^2/2}z\left(\frac{\alpha}\sigma+\frac2{\sigma^3}\right)&=
\int_s^\infty e^{-\alpha \sigma^2/2}(z-z_\infty)\left(\frac{\alpha}\sigma+\frac2{\sigma^3}\right)+\frac{z_\infty}{s^2} e^{-\alpha s^2/2}.
\end{align*}
Therefore, we can recast \eqref{z-z-inf}  as
\eqref{z-z-inf-s3} with \bq\label{R-0}
\begin{split}
R_0(s)= &-\frac{4\alpha (z-z_\infty)}{s^2}-\frac{8y}{s^3}
-\frac{12(z-z_\infty)}{s^4}+8\int_s^\infty (z-z_\infty) \left(\frac{\alpha}{\sigma^3}+\frac{6}{\sigma^5}\right)\,d\sigma \\
&+2c^2_0\int_s^\infty e^{-\alpha
\sigma^2/2}(z-z_\infty)\left(\frac{\alpha}\sigma+\frac2{\sigma^3}\right)\,d\sigma.
\end{split}
\eq

Let us take $s_0\geq 1$ to be fixed in what follows. For $t\geq
s_0$, we denote $\norm{\cdot}_t$ the norm of $L^\infty([t,\infty))$.
From the definition of $R_0$ in \eqref{R-0} and the elementary
inequalities \bq\label{int-conv} \alpha \int_s^\infty
\frac{e^{-\alpha \sigma^2/2}}{\sigma^n}\leq \frac{e^{-\alpha s^2/2}
}{s^{n+1}}, \quad \text{ for all }\alpha\geq 0, \quad n\geq 1, \eq
and \bq\label{int-conv2}
 \int_s^\infty \frac{e^{-\alpha \sigma^2/2}}{\sigma^n}\leq \frac{e^{-\alpha s^2/2} }{(n-1)s^{n-1}},
 \quad \text{ for all } \alpha\geq 0, \quad n>1,
\eq we obtain
$$
\norm{R_0}_t\leq \frac{8\norm{y}_t}{t^3}+\frac{4}{t^2}\left(8+c_0^2e^{-\alpha t^2/2}
\right)\norm{z-z_\infty}_t.
$$
Hence, choosing $s_0=4\sqrt{8+c_0^2}$, so that
$\frac{4}{t^2}\left(8+c_0^2e^{-\alpha t^2/2}  \right)\leq 1/2$, from
\eqref{est-y} and \eqref{z-z-inf-s3} we conclude that there exists a
constant $C(E_0,c_0)>0$ such that \bqq \norm{z-z_\infty}_t\leq
\frac{C(E_0,c_0)}{t}e^{-\alpha t^2/4}, \quad \text{ for all
}\alpha\in[0,1)\quad {\hbox{and}}\quad \ t \geq s_0, \eqq which
implies that \bq\label{cota} \abs{z(s)-z_\infty}\leq
\frac{C(E_0,c_0)}{s}e^{-\alpha s^2/4}, \quad \text{ for all
}\alpha\in[0,1), \quad s \geq s_0. \eq

Finally, plugging \eqref{est-y} and \eqref{cota}  into \eqref{R-0}
and bearing in mind the inequalities \eqref{int-conv} and
\eqref{int-conv2}, we deduce that
\begin{equation}
 \label{cota-R0-bis}
  |R_{0}(s)|\leq C(E_{0}, c_0)\, \frac{e^{-\alpha s^2/4}}{s^3},
  \qquad \forall\,  s\geq s_0=4\sqrt{8+c_0^2},
\end{equation}
and the proof of {\it{(iii)}} is completed.
\end{proof}
%
Formula \eqref{z-z-inf-s3} in Proposition~\ref{cotas} gives $z$ in
terms of $y$ and $h$. Therefore, we can reduce our analysis to that
of the variables $y$ and $h$ or, in other words, to that of the
system \eqref{z}--\eqref{h}.
 In fact, a first attempt could be to
define $w=y+ih$, so that from \eqref{y} and \eqref{h}, we have that
$w$ solves
\bq\label{eq-w} \left(
we^{(\alpha+i\beta)s^2/4}\right)'=e^{(-\alpha+i\beta) s^2/4}
\left(\gamma-\frac{c_0^2}{2} (z-z_\infty)\right). \eq
From \eqref{cota} in Proposition~\ref{cotas} and (\ref{eq-w}), we see
that the limit
$w_{*}=\lim_{s\rightarrow \infty}w(s)e^{(\alpha+i\beta)s^2/4}$
 exists (at least when $\alpha\neq 0$), and integrating (\ref{eq-w})
from some $s>0$ to $\infty$ we find that
$$
w(s)=e^{-(\alpha+i\beta)s^2/4}\left(w_{*}-\int_{s}^{\infty}
e^{(-\alpha+i\beta)\sigma^2/4}\left(\gamma-\frac{c_0^2}{2}
(z-z_\infty)\right)d\sigma\right).
$$
In order to obtain an asymptotic expansion, we need to estimate
$\int_{s}^\infty e^{(-\alpha+i\beta)\sigma^2/4}(z-z_\infty)$, for
$s$ large. This can be achieved using \eqref{cota},
\bq\label{cota-mala} \left| \int_{s}^\infty
e^{(-\alpha+i\beta)\sigma^2/4}(z-z_\infty)\,d\sigma\right| \leq
C(E_0,c_0)\int_{s}^\infty
\frac{e^{-\alpha\sigma^2/2}}{\sigma}\,d\sigma \eq
and the asymptotic expansion
\bqq \int_{s}^\infty \frac{e^{-\alpha\sigma^2/2}}{\sigma}\,d\sigma=
e^{-\alpha s^2/2}\left(\frac1{\alpha s^2} -\frac2{\alpha^2s^4}
+\frac{8}{\alpha^3s^6} +\cdots\right). \eqq However this estimate
diverges as $\alpha\to 0$. The problem is that the bound used in
obtaining \eqref{cota-mala}  does not take into account the
cancellations due to the oscillations. Therefore, and in order to obtain
the asymptotic behaviour of $z$, $y$ and $h$ valid for all
$\alpha\in[0,1)$, we need a more refined analysis.
In the next proposition we study the system \eqref{z}--\eqref{h},
where we consider the cancellations due the oscillations (see
Lemma~\ref{int-osc} below). The following result provides estimates that are valid for $s\geq s_1$, for some $s_1$
independent of $\alpha$, if $\alpha$ is small.
\begin{prop}\label{prop-asymp} With the same notation and
terminology as in Proposition~\ref{cotas},  let
$$s_1=\max\left\{4\sqrt{8+c_0^2}, 2c_0\left(\frac{1}{\beta}-1\right)^{1/2}\right\}.$$ Then for all $s\geq s_1$,
\begin{align}
y(s)&=be^{-\alpha s^2/4}\sin(\phi(s_1;s))-\frac{2\alpha \gamma}{s} e^{-\alpha s^2/2}+O\left(\frac{e^{-\alpha s^2/2}}{\beta^2 s^2}\right),\label{asym-y-0}\\
h(s)&=be^{-\alpha s^2/4}\cos(\phi(s_1;s))-\frac{2\beta \gamma}{s} e^{-\alpha s^2/2}+O\left(\frac{e^{-\alpha s^2/2}}{\beta^2 s^2}\right),\label{asym-h-0}
\end{align}
where
\bqq \phi(s_1;s)=a+\beta \int_{s_1^2/4}^{s^2/4}\sqrt{1+
c_0^2\frac{e^{-2\alpha t}}{t}}\,dt,
\eqq
$a\in[0,2\pi)$ is a real constant, and  $b$ is a positive constant given by
\bq\label{b}
b^2=\left(2E_0-\frac{c_0^2}{4}z_{\infty}\right)z_{\infty}.
\eq
\end{prop}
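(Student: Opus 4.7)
The plan is to reduce the $3\times 3$ real system \eqref{z}--\eqref{h} to a single scalar complex first-order ODE by introducing $\tilde w(s) := e^{\alpha s^2/4}(y(s)+ih(s))$. Using $\alpha^2+\beta^2=1$, equations \eqref{y}--\eqref{h} collapse to
\begin{equation*}
\tilde w'(s)+i\frac{\beta s}{2}\,\tilde w(s)\;=\;e^{-\alpha s^2/4}\bigl(2E_0-\tfrac{c_0^2}{2}z(s)\bigr)\;=\;e^{-\alpha s^2/4}\Bigl[\gamma+\tfrac{c_0^2}{2}(z_\infty-z)\Bigr],
\end{equation*}
while the energy conservation of Proposition~\ref{cotas}(i) becomes the pointwise identity $|\tilde w(s)|^2 = z(s)\bigl(2E_0-c_0^2 z(s)/4\bigr)$, which together with Proposition~\ref{cotas}(ii) gives $|\tilde w(s)|\to b$ with $b^2$ as in \eqref{b}. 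The role of the phase $\phi(s_1;s)$ is that its derivative $\phi'(s) = (\beta s/2)\sqrt{1+\eta(s)}$, with $\eta(s):=4c_0^2 e^{-\alpha s^2/2}/s^2$, is precisely the effective oscillation frequency of the linearisation around the limit; the threshold $s\ge s_1$ guarantees $\eta(s)\le 1$ uniformly in $\alpha\in[0,1)$, so that $\phi'(s)-\beta s/2=(\beta s/2)(\sqrt{1+\eta}-1)=O\bigl(\beta e^{-\alpha s^2/2}/s\bigr)$.

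Next I would study the modulated variable $\hat w(s):=\tilde w(s)\,e^{i\phi(s_1;s)}$, which satisfies
\begin{equation*}
\hat w'(s)\;=\;e^{-\alpha s^2/4}\Bigl[\gamma+\tfrac{c_0^2}{2}(z_\infty-z)\Bigr]e^{i\phi(s)}\;+\;i\bigl(\phi'(s)-\tfrac{\beta s}{2}\bigr)\hat w(s).
\end{equation*}
Combining the bound on $\phi'-\beta s/2$ with Proposition~\ref{cotas}(iii) shows that $\hat w'$ is integrable on $[s_1,\infty)$, so the limit $\hat w_\infty:=\lim_{s\to\infty}\hat w(s)$ exists, has $|\hat w_\infty|=b$, and the free constant $a\in[0,2\pi)$ in the definition of $\phi$ is chosen so that $\hat w_\infty=ib$. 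This reduces the proof to a careful evaluation of
\begin{equation*}
\tilde w(s)\,e^{i\phi(s)}\;=\;ib\;-\;\int_s^\infty \hat w'(\sigma)\,d\sigma
\end{equation*}
to the required order, after which taking real and imaginary parts of $y+ih=e^{-\alpha s^2/4}\tilde w$ yields \eqref{asym-y-0} and \eqref{asym-h-0}.

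The explicit correction term comes from the dominant piece $\gamma\int_s^\infty e^{-\alpha\sigma^2/4}e^{i\phi(\sigma)}d\sigma$. Setting $\mu(\sigma):=-\alpha\sigma/2+i\phi'(\sigma)$ so that $\bigl(e^{-\alpha\sigma^2/4}e^{i\phi(\sigma)}\bigr)'=\mu(\sigma)\,e^{-\alpha\sigma^2/4}e^{i\phi(\sigma)}$, an integration by parts yields
\begin{equation*}
\gamma\!\int_s^\infty\! e^{-\alpha\sigma^2/4}e^{i\phi(\sigma)}d\sigma\;=\;-\frac{\gamma}{\mu(s)}\,e^{-\alpha s^2/4}e^{i\phi(s)}\;+\;\gamma\!\int_s^\infty\!\frac{\mu'(\sigma)}{\mu(\sigma)^2}\,e^{-\alpha\sigma^2/4}e^{i\phi(\sigma)}\,d\sigma.
\end{equation*}
Since $-1/\mu(s)=2(\alpha+i\beta)/s + O(e^{-\alpha s^2/2}/s^3)$ (using $1/(-\alpha+i\beta)=-(\alpha+i\beta)$), the boundary term equals $\bigl(2\gamma(\alpha+i\beta)/s\bigr)e^{-\alpha s^2/4}e^{i\phi(s)}$ plus a negligible correction, and a further integration by parts against the oscillation $e^{i\phi}$ absorbs the remainder into the final error term. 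Subtracting this contribution from $ib$ and multiplying by $e^{-i\phi(s)}$ then produces the explicit correction $-(2\gamma(\alpha+i\beta)/s)e^{-\alpha s^2/4}$ in $\tilde w$, whose real and imaginary parts give the $-(2\alpha\gamma/s)e^{-\alpha s^2/2}$ and $-(2\beta\gamma/s)e^{-\alpha s^2/2}$ terms in the statement.

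The main obstacle is the uniform-in-$\alpha$ bound of the two remaining integrals
\begin{equation*}
\tfrac{c_0^2}{2}\!\int_s^\infty\! e^{-\alpha\sigma^2/4}(z_\infty-z)\,e^{i\phi(\sigma)}d\sigma\qquad\text{and}\qquad i\!\int_s^\infty\!\bigl(\phi'(\sigma)-\tfrac{\beta\sigma}{2}\bigr)\hat w(\sigma)\,d\sigma,
\end{equation*}
whose integrands are pointwise of size $O(e^{-\alpha\sigma^2/2}/\sigma)$. A crude estimate produces a $1/\alpha$ loss which is fatal for small $\alpha$; the remedy is to exploit the fast oscillation of $e^{i\phi}$ by a further integration by parts, invoking the oscillatory-integral lemma alluded to in the preceding discussion (``Lemma~\ref{int-osc}''). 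Concretely, one uses Proposition~\ref{cotas}(iii) to rewrite $z_\infty-z$ as $(4/\sigma)(\alpha y+\beta h)$ plus faster-decaying terms, substitutes the leading-order approximation $y+ih\sim ib\,e^{-\alpha\sigma^2/4}e^{-i\phi(\sigma)}$ to split each integrand into a non-oscillating part (manageable directly via the identity $\alpha\,e^{-\alpha\sigma^2/2}=-\sigma^{-1}\tfrac{d}{d\sigma}e^{-\alpha\sigma^2/2}$) and $e^{\pm i\phi}$-oscillating parts, and then integrates by parts once against the fast phase to gain a factor $1/(\beta\sigma)$. Each such step loses a factor $1/\beta$, and two steps suffice to reach the claimed $O(e^{-\alpha s^2/2}/(\beta^2 s^2))$ bound. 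Combining everything gives
\begin{equation*}
\tilde w(s)\;=\;ib\,e^{-i\phi(s_1;s)}\;-\;\frac{2\gamma(\alpha+i\beta)}{s}\,e^{-\alpha s^2/4}\;+\;O\!\Bigl(\frac{e^{-\alpha s^2/4}}{\beta^2 s^2}\Bigr),
\end{equation*}
whose real and imaginary parts are precisely \eqref{asym-y-0} and \eqref{asym-h-0}.
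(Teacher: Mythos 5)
Your scalar reduction is structurally close to the paper's proof: the variable $\tilde w=e^{\alpha s^2/4}(y+ih)$, the modulation by $\phi$ with $\phi'=(\beta s/2)\sqrt{1+\eta}$, the energy identity giving $|\tilde w|\to b$, and the integration by parts producing the explicit correction $-2\gamma(\alpha+i\beta)e^{-\alpha s^2/4}/s$ all mirror what the paper obtains by passing to $t=s^2/4$, diagonalizing the $(u,v)$-system and invoking Lemma~\ref{int-osc}. The gap is exactly at the step you flag as the main obstacle: the two remaining integrals \emph{cannot} be bounded separately by $O\bigl(e^{-\alpha s^2/2}/(\beta^2s^2)\bigr)$, and the mechanism you describe does not cover their worst pieces. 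Indeed, using \eqref{z-z-inf-s3} and $y+ih=e^{-\alpha\sigma^2/4}\hat w\,e^{-i\phi}$, the non-oscillating component of $\tfrac{c_0^2}{2}e^{-\alpha\sigma^2/4}(z_\infty-z)e^{i\phi}$ is $\tfrac{c_0^2}{\sigma}e^{-\alpha\sigma^2/2}(\alpha-i\beta)\hat w$, while the non-oscillating component of $i(\phi'-\tfrac{\beta\sigma}{2})\hat w$ is $\tfrac{i\beta c_0^2}{\sigma}e^{-\alpha\sigma^2/2}\hat w$ (plus faster-decaying corrections). Both are $\beta$-weighted, not $\alpha$-weighted, so the identity $\alpha e^{-\alpha\sigma^2/2}=-\sigma^{-1}\tfrac{d}{d\sigma}e^{-\alpha\sigma^2/2}$ does not apply to them, and integrating either one alone over $[s,\infty)$ gives, when $b>0$, a quantity of size $\beta b c_0^2\,|\ln(\alpha s^2)|$ for small $\alpha$ — divergent at $\alpha=0$ and in particular not even bounded uniformly in $\alpha$. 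The estimate only closes after you \emph{add} the two contributions: the $\mp i\beta$ parts cancel exactly and the sum $\tfrac{\alpha c_0^2}{\sigma}e^{-\alpha\sigma^2/2}\hat w$ is the $\alpha$-weighted term your identity can handle. This cancellation is the whole point of the $\sqrt{1+\eta}$ correction in $\phi$, and it is what the paper's diagonalization performs automatically: there the dangerous coupling is absorbed into the eigenvalue $\lambda_+=\tfrac{\alpha K}{2}+i\beta\Delta^{1/2}$, whose real part is $\alpha$-weighted and whose imaginary part is integrated exactly into the phase, leaving only coefficients ($K'$, $\Delta'$) with the uniformly integrable structure $e^{-2\alpha t}(\alpha/t+1/t^2)$.

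Two further points need repair. First, substituting ``the leading-order approximation $y+ih\sim ib\,e^{-\alpha\sigma^2/4}e^{-i\phi}$'' inside the integrals is circular, since that is the asymptotics being proved; you must either keep $\hat w$ implicit in the integrands and close the estimate by an absorption/Gronwall argument using only the a priori bounds of Proposition~\ref{cotas} (this is how the paper treats its $G_1$-term, which contains the unknown $w_1$), or set up an explicit bootstrap. Second, the assertion that ``$\hat w'$ is integrable on $[s_1,\infty)$'' is not true in the absolute sense uniformly in $\alpha$ (for $\alpha=0$ the source $\gamma e^{i\phi}$ is only conditionally integrable), so the existence of $\hat w_\infty$ must itself come out of the oscillatory integration by parts rather than precede it. None of this invalidates the overall strategy — with the cancellation made explicit and a bootstrap in place, your scalar route is a legitimate streamlined variant of the paper's matrix argument — but as written the uniform-in-$\alpha$ bound, which is the entire difficulty of the proposition, is not established.
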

%
%
\begin{proof}
First, notice that plugging the expression for $z(s)-z_\infty$ in
(\ref{z-z-inf-s3}) into (\ref{y}), the system (\ref{y})--(\ref{h})
for the variables $y$ and $h$ rewrites equivalently as
\begin{align}
 y'&=\frac{s}{2} (\beta h-\alpha y)+ \frac{2c_0^2}{s}e^{-\alpha s^2/2} (\beta h +\alpha y)
 +\gamma e^{-\alpha s^2/2} +R_1(s),
 \label{y-1}\\
 h'&=- \frac{s}{2} (\beta y+\alpha h),
 \label{h-1}
\end{align}
where
\begin{equation}
 \label{R-1}
 R_1(s)=-\frac{c_0^2}{2} e^{-\alpha s^2/2}R_0(s)
 +\frac{2c_0^2\gamma e^{-\alpha s^2}}{s^2},
\end{equation}
and $R_{0}$ is given by (\ref{R-0}).

Introducing the new variables, \bq\label{def-u-v} u(t)=e^{\alpha
t}y(2 \sqrt{t}), \quad v(t)=e^{\alpha t}h(2 \sqrt{t}), \eq we recast
\eqref{y-1}--\eqref{h-1} as \bq\label{system}
\begin{pmatrix}
u\\
v
\end{pmatrix}'=\begin{pmatrix}
\alpha K & \beta (1+K)\\
-\beta & 0
\end{pmatrix}
\begin{pmatrix}
u\\
v
\end{pmatrix}
+\begin{pmatrix}
F\\
0
\end{pmatrix},
\eq with \bqq K=\frac{c_0^2  e^{-2\alpha t}}{t}, \quad F=\gamma
\frac{e^{-\alpha t}}{\sqrt t}+\frac{e^{-\alpha t}}{\sqrt
t}R_1(2\sqrt t), \eqq where $R_1$ is the function defined in
(\ref{R-1}). In this way, we can regard \eqref{system} as a
non-autonomous system. It is straightforward to check that the
matrix
\bqq A=\begin{pmatrix}
\alpha K & \beta (1+K)\\
-\beta & 0
\end{pmatrix}\eqq
is diagonalizable, i.e. $A=PDP^{-1}$, with \bqq D=\begin{pmatrix}
\lambda_+ & 0\\
0 & \lambda_-
\end{pmatrix}, \quad
P=\begin{pmatrix}
-\frac{\alpha K}{2\beta}-i{\Delta}^{1/2} & -\frac{\alpha K}{2\beta}+i{\Delta}^{1/2}\\
1 & 1
\end{pmatrix},
\eqq
\begin{equation}
 \label{delta}
\lambda_\pm=\frac{\alpha K}{2}\pm i\beta{\Delta}^{1/2},
\qquad {\hbox{and}}\qquad
\Delta=1+K-\frac{\alpha^2 K^2}{4\beta^2}.
\end{equation}
At this point we remark that the condition $t\geq t_1$, with
$t_1:=s_1^2/4$ and $s_1\geq 2c_0(\frac{1}{\beta}-1)^{1/2}$, implies
that \bq\label{cota-K-beta} 0< K\left(\frac1{\beta}-1\right)\leq 1,
\quad \forall\, t\geq t_1, \eq so that
\bq\label{cota-Delta}
\Delta=1+K-\frac{(1-\beta^2)}{4\beta^2}K^2=\left(1+\frac{K}{2}+\frac{K}{2\beta}\right)
\left(1+\frac{K}2\left(1-\frac1\beta\right)\right)\geq \frac12, \quad \forall\, t\geq t_1. \eq
Thus, defining
\begin{equation}
 \label{w-def}
 w=(w_1,w_2)=P^{-1}(u,v),
\end{equation}
we get \bq\label{sys2}
\left(e^{-\int_{t_1}^tD}w\right)'=e^{-\int_{t_1}^tD}
\left((P^{-1})'P w+P^{-1} \tilde F \right), \eq with $\tilde
F=(F,0)$. From the definition of $w$ and taking into account that
$u$ and $v$ are real functions, we have that $w_1=\bar w_2$ and
therefore the study of \eqref{sys2} reduces to the analysis of the
equation: \bq\label{sys3} \left(e^{-\int_{t_1}^t\lambda_+
}w_1\right)'=e^{-\int_{t_1}^t\lambda_+} G(t),\eq with
$$
G(t)=i\frac{\alpha
K'}{4\beta\Delta^{1/2}}(w_1+\bar
w_1)-\frac{\Delta'}{4\Delta}(w_1-\bar
w_1)+i\frac{F}{2\Delta^{1/2}}.
$$
From \eqref{sys3} we have \bq\label{w1}
w_1(t)=e^{\int_{t_1}^t\lambda_+}\left(
w_1(t_1)+w_\infty-\int_t^\infty e^{-
\int_{t_1}^\tau\lambda_+}G(\tau)\, d \tau \right), \eq with
$$
 w_\infty=\int_{t_1}^\infty e^{- \int_{t_1}^\tau\lambda_+}G(\tau).
$$
Since
\begin{equation}
 \label{w1-def}
 w_1=\frac{iu}{2\Delta^{1/2}}+\frac{v}2+\frac{i\alpha K
v}{4\beta \Delta^{1/2}},
\end{equation}
we recast $G$ as $G=i(G_1+G_2+G_3)$ with \bqq G_1=\frac{\alpha
K'v}{4\beta
\Delta^{1/2}}-\frac{\Delta'}{4\Delta^{3/2}}\left(u+\frac{\alpha K
v}{2\beta}\right), \quad G_2=\frac{\gamma e^{-\alpha t}}{2
t^{1/2}\Delta^{1/2}} \quad {\hbox{and}}\quad G_3=\frac{ e^{-\alpha
t}}{2 t^{1/2}\Delta^{1/2}}R_1(2t^{1/2}). \eqq Now, from the
definition of $K$ and $\Delta$, we have
$$
\begin{array}{ll}
K'=\displaystyle{-K\left(2\alpha+\frac1{t}\right)}, \quad &
K''=\displaystyle{K\left( \left( 2\alpha +\frac{1}{t} \right)^2+\frac{1}{t^2}\right)},
    \\[2ex]
\Delta'=\displaystyle{K'\left(1-\frac{\alpha^2 K}{2\beta^2}\right)} &
{\hbox{and}}\quad
\Delta''=\displaystyle{K\left(  \left(2\alpha+\frac{1}{t}\right)^2
+\frac{1}{t^2}\right)\left( 1-\frac{\alpha^2 K}{2\beta^2} \right)
-\frac{\alpha^2}{2 \beta^2}K^2\left( 2\alpha+\frac{1}{t} \right)^2}.
\end{array}
$$
Also, since $s_{1}=\max\{4\sqrt{8+c_0^2}, 2c_0(1/\beta-1)^{1/2}
\}$, for all $t\geq t_1=s_{1}^2/4$, we have in particular that
$t\geq 8+c_0^2$ and $t\geq c_0^2(1/\beta-1)$, hence
\begin{equation}
 \label{key}
 \frac{c_0^2}{t\beta}=\frac{c_0^2}{t}\left( \frac{1}{\beta}-1\right)+\frac{c_0^2}{t}\leq 2
\end{equation}
and
\begin{equation}
 \label{cota-K}
\left| 1-\frac{\alpha^2 K}{4\beta^2}\right|\leq
1+\frac{1}{4\beta}\left( \frac{c_0^2}{t\beta}  \right)\leq
\frac{2}{\beta}.
\end{equation}
Therefore
\bq\label{cota-K-prima}
 \abs{K'}\leq c_0^2e^{-2\alpha t}\left(\frac{2\alpha
 }{t}+\frac1{t^2}\right),
 \quad \quad
 \abs{\Delta'}\leq \frac{2c_0^2e^{-2\alpha t} }{\beta}\left(\frac{2\alpha}{t}+\frac1{t^2}\right)
\eq and
\begin{equation}
 \label{cota-K-prima-2}
 |\Delta''|\leq \frac{24c_0^2}{\beta} e^{-2\alpha t}\left( \frac{\alpha}{t}+\frac{1}{t^2}  \right).
\end{equation}
From Proposition~\ref{cotas}, $u$ and $v$ are bounded in terms of
the energy. Thus, from the definition of $G_1$ and the estimates
(\ref{cota-K-beta}), (\ref{cota-Delta}) and (\ref{cota-K-prima}), we
obtain
$$
\abs{G_1(t)}\leq \frac{C(E_0,c_0)e^{-2\alpha
t}}{\beta^2}\left(\frac{\alpha }{t}+\frac1{t^2}\right).
$$
Since \bq\label{cota-e} \left|e^{\pm
\int_{t_1}^\tau\lambda_+}\right|\leq 2, \eq we conclude that
\bq\label{cota-G1} \left|\int_t^\infty  e^{-
\int_{t_1}^\tau\lambda_+}G_1(\tau) \, d \tau \right|\leq
\frac{C(E_0,c_0)}{\beta^2} \int_t^\infty e^{-2\alpha
\tau}\left(\frac{\alpha }{\tau}+\frac1{\tau^2}\right)\leq
\frac{C(E_0,c_0)e^{-2\alpha t}}{\beta^2 t}. \eq Here we have used
the  inequality \bq\label{ipp2}
 \alpha\int_t^\infty \frac{ e^{-2\alpha \sigma}}{\sigma^n}\,d\sigma\leq \frac{e^{-2\alpha t}}{2 t^n}, \quad n\geq
 1,
\eq which follows by integrating by parts.

In order to handle the terms involving $G_2$ and $G_3$, we need to
take advantage of the oscillatory character of the involved
integrals, which is exploited in Lemma \ref{int-osc}.
From (\ref{cota-Delta}), (\ref{cota-K-prima}) and
(\ref{cota-K-prima-2}), straightforward calculations show that the
function defined by $f=\gamma/(2t^{1/2}\Delta^{1/2})$ satisfies the
hypothesis in part {\it{(ii)}} of Lemma~\ref{int-osc} with $a=1/2$
and $L=C(E_0, c_0)/\beta$. Thus invoking this lemma with
$f=\gamma/(2t^{1/2}\Delta^{1/2})$ and noticing that
$$
   \frac{1}{\Delta^{1/2}}=1+\left(\frac{1}{\Delta^{1/2}}  -1\right)
$$
and that
 \bqq \left| \frac{1}{\Delta^{1/2}}-1\right| = \left|
\frac{1-\Delta}{\Delta^{1/2}(\Delta^{1/2}+1)}\right| \leq
\frac{|K|\left| 1-\frac{\alpha^2K}{4\beta^2}
\right|}{|\Delta^{1/2}(\Delta^{1/2}+1)|} \leq \frac{2\sqrt{2}
c_0^2}{\beta t}, \eqq where we have used \eqref{cota-Delta} and
\eqref{cota-K}, we conclude that \bq\label{cota-G2} \int_t^\infty
e^{- \int_{t_1}^\tau\lambda_+}G_2(\tau) \, d \tau
=\frac{\gamma}{2(\alpha +i \beta)t^{1/2}}{e^{-\int_{t_1}^t\lambda_+
}e^{-\alpha t}}+R_2(t), \eq with \bqq \abs{R_2(t)}\leq
\frac{C(E_0,c_0)e^{-\alpha t}}{\beta^2 t^{3/2}}. \eqq
%
For $G_3$, we first write explicitly (recall the definition of $R_1$
in (\ref{R-1}))\bq\label{G-3} G_3(t)=-\frac{c_0^2 R_0(2\sqrt
t)e^{-3\alpha t}}{4t^{1/2}\Delta^{1/2}}+\frac{c_0^2\gamma
e^{-5\alpha t}}{4t^{3/2}\Delta^{1/2}}:\, =G_{3,1}(t)+ G_{3,2}(t).
\eq Using \eqref{cota-R0-bis} and (\ref{cota-Delta}), we see that
$\abs{G_{3,1}(t)}\leq C(E_0, c_0) e^{-4\alpha t}/t^2$, so that we
can treat this term as we did for $G_1$ to obtain
\begin{equation}
 \label{cota-G31}
 \left| \int_{t}^{\infty} e^{-\int_{t_1}^{\tau}\lambda_{+}} G_{3,1}(\tau)\, d\tau
 \right|\leq \frac{C(E_0, c_0) e^{-4\alpha t}}{t}.
\end{equation}
For the second term, using (\ref{cota-Delta}), (\ref{cota-K-prima})
and (\ref{key}), it is easy to see that the function $f$ defined by
$f=(c_0^2\gamma)/(4t^{3/2}\Delta^{1/2})$ satisfies
$$
 \abs{f(t)}\leq\frac{C(E_0, c_0)}{t^{3/2}}
 \quad {\hbox{and}}\quad
 \abs{f'(t)}\leq C(E_0, c_0)\left( \frac{\alpha}{t^{3/2}}+\frac{1}{t^{5/2}} \right),
$$
as a consequence, invoking part {\it{(i)}} of Lemma~\ref{int-osc},
we obtain
\begin{equation}
 \label{cota-G32}
 \left| \int_{t}^{\infty} e^{-\int_{t_1}^{\tau}\lambda_{+}} G_{3,2}(\tau)\, d\tau
 \right|\leq \frac{C(E_0, c_0) e^{-5\alpha t}}{\beta {t}^{3/2}}.
\end{equation}
From (\ref{w1}), (\ref{cota-e}), (\ref{cota-G1}), (\ref{cota-G31})
and (\ref{cota-G32}), we deduce that
\begin{equation}
 \label{w1-bis}
 w_1(t)=e^{\int_{t_1}^t\lambda_+}\left( w_1(t_1)+w_\infty\right)
-\frac{\gamma(\beta+i\alpha)}{2t^{1/2}}e^{-\alpha t}+R_3(t)
\quad {\hbox{with}}\quad
\abs{R_3(t)}\leq \frac{C(E_0,c_0)e^{-\alpha t}}{\beta^2 t}.
\end{equation}
%
Now we {\it{claim}} that
\begin{equation}
 \label{E-bound}
  e^{\int_{t_1}^t\lambda_+}=C_{\alpha,c_0}e^{ i\beta I(t)}+H(t),
 \quad \text{ with } \quad I(t)=\int_{t_1}^{t}\sqrt{1+K(\sigma)}\,d \sigma, \quad
 \abs{H(t)}\leq \frac{3c_0^2e^{-2\alpha t}}{t},
\end{equation}
and
$$
C_{\alpha,c_0}=\exp\left(\frac{\alpha}{2} \int_{t_1}^\infty
K\, d\sigma\right) \exp\left( -i \frac{\alpha^2}{4\beta}
\int_{t_1}^\infty
\frac{K^2}{\Delta^{1/2}+(1+K)^{1/2}}\,d\sigma\right).
$$
Indeed, recall that $\lambda_+=\frac{\alpha K}{2}+
i\beta{\Delta}^{1/2}$ so that
\begin{equation}
 \label{H0}
  e^{\int_{t_1}^{t} \lambda_{+}}=e^{\alpha\int_{t_1}^{t} \frac{K}{2}}\,
  e^{i\beta \int_{t_1}^{t} \Delta^{1/2}}.
\end{equation}
First, we notice that \bqq \alpha \int_{t_1}^t \frac{K}{2}=c_0^2
\alpha\int_{t_1}^\infty \frac{ e^{-2\alpha \sigma}}{2\sigma}-c_0^2
\alpha\int_t^\infty \frac{ e^{-2\alpha \sigma}}{2\sigma}, \eqq where
both integrals are finite in view of \eqref{ipp2}.
Moreover, by combining with the fact that  $\abs{ 1-e^{-x}}\leq x$, for $x\geq 0$, we can write $$  \exp\left(-c_0^2
\alpha\int_t^\infty \frac{ e^{-2\alpha
\sigma}}{2\sigma}\right)=1+H_{1}(t), $$ with
\begin{equation}
 \label{H1}
 \abs{H_{1}(t)}\leq \frac{c_0^2e^{-2\alpha t}}{4t}, \quad
\text{ for all } t\geq c_0^2/4.
\end{equation}
The above argument shows that \begin{equation}
\label{H2}e^{\alpha\int_{t_1}^t
\frac{K}{2}}=e^{\alpha\int_{t_1}^\infty \frac{K}{2}}(1+H_{1}(t)),
\end{equation}
with $H_1(t)$ satisfying (\ref{H1}).

For the second term of the eigenvalue, using the definition of
$\Delta$ in (\ref{delta}), we write
\begin{align*}
i\beta \int_{t_1}^t \Delta^{1/2}
&=i\beta \int_{t_1}^t \left(\Delta^{1/2}- \sqrt{1+K}\right)+i\beta \int_{t_1}^t \sqrt{1+K}\\
&=-i\frac{\alpha^2 }{4\beta}\int_{t_1}^t \frac{K^2}{\Delta^{1/2}+(1+K)^{1/2}}+i\beta\int_{t_1}^t \sqrt{1+K}.
\end{align*}
Proceeding as before and using that $\abs{1-e^{ix}}\leq \abs{x}$,
for $x\in \R$, and that \bqq \alpha\int_{t}^\infty
\frac{K^2}{\Delta^{1/2}+\sqrt{1+K}}\leq \alpha\int_{t}^{\infty}
K^2(\sigma)\, d\sigma =\alpha c_0^4 \int_{t}^\infty
\frac{e^{-4\alpha \tau}}{\tau^2}\leq \frac{c_0^4e^{-4\alpha
t}}{4t^2}, \eqq we conclude that
\begin{equation}
 \label{H3}
 e^{ i \beta \int_{t_1}^t \Delta^{1/2}}=
 e^{i\beta I(t)}e^{ -i \frac{\alpha^2}{4\beta}
\int_{t_1}^\infty \frac{K^2}{\Delta^{1/2}+(1+K)^{1/2}}}(1+H_{2}),
\end{equation}
with
$$
 \abs{H_{2}(t)}\leq \frac{c_0^4e^{-4\alpha
t}}{16\beta t^2}\leq \frac{c_0^2 e^{-4\alpha t}}{8t},
$$
bearing in mind (\ref{key}). Therefore, from \eqref{H0}, \eqref{H2}
and \eqref{H3},
$$
e^{\int_{t_1}^{t}\lambda_{+}}= C_{\alpha,c_0} e^{i\beta I(t)}(1+H_1(t))(1+H_2(t)).
$$
The claim follows from the above identity, the bounds for $H_1$ and
$H_2$, and the fact that $C_{\alpha, c_0}$ satisfies that
$|C_{\alpha,c_0}|=|e^{\int_{t_1}^{\infty} \lambda_{+}}|\leq 2$ (see
(\ref{cota-e})).
From (\ref{w1-bis}), the claim and writing
\begin{equation}
\label{cont}
 C_{\alpha,c_0}(w_1(t_1)+w_{\infty})= (be^{ia})/2
\end{equation}
for some real constants $a$ and $b$ such that $b\geq 0$ and
$a\in[0,2\pi)$, it follows that
\begin{equation}
 \label{w_1-asymp}
 w_1(t)=\frac{b}{2} e^{i(\beta I(t)+a)}-\frac{\gamma(\beta+i\alpha)}{2t^{1/2}}+R_{w_1}(t)
 \quad {\hbox{with}}\quad
 \abs{R_{w_1}(t)}\leq \frac{C(E_0,c_0)e^{-\alpha t}}{\beta^2 t}.
\end{equation}
The above bound for $R_{w_1}(t)$ easily follows from the bounds for
$R_3(t)$ and $H(t)$ in (\ref{w1-bis}) and (\ref{E-bound})
respectively, and the fact that
\begin{equation}
 \label{w1-bound}
 \abs{w_1(t)}\leq C(E_0, c_0), \qquad \forall\,  t\geq t_1.
\end{equation}
This last inequality is a consequence of
(\ref{def-u-v}),
(\ref{cota-Delta}), (\ref{w1-def}), (\ref{key})
and the bounds
for $y$ and $h$ established in (\ref{est-y}) in
Proposition~\ref{cotas}.

Going back to the definition of $w$ in (\ref{w-def}), we have
$(u,v)=P(w_1, w_2)$, that is
\begin{equation}
\label{u-v-asymp}
\begin{array}{l}
 u=\displaystyle{-\frac{\alpha K}{2\beta}(w_1+\bar w_1)-i{\Delta^{1/2}}(w_1-\bar w_1)=2\Im (w_1)+R_4(t)},
   \\[2ex]
 v= (w_1+\bar w_1)=2\Re(w_1),
\end{array}
\end{equation}
with
\begin{align*}
 \abs{R_4(t)}=&\left| \frac{-\alpha K}{\beta}\Re(w_1) +2(\Delta^{1/2}-1)\Im(w_1)   \right|
   \\
 &
 \leq \frac{K}{\beta}\abs{\Re(w_1)}+2\frac{\abs{\Delta-1}}{\Delta^{1/2}+1}\abs{\Im(w_1)}
     \\
 &
 \leq \frac{2c_0^2e^{-2\alpha t}}{\beta t}(\abs{\Re(w_1)}+\abs{\Im(w_1)})
 \leq \frac{C(E_0, c_0)e^{-2\alpha t}}{\beta t},
\end{align*}
where we have used (\ref{cota-Delta}), (\ref{cota-K}), and
(\ref{w1-bound}). From (\ref{w_1-asymp}) and (\ref{u-v-asymp}), we obtain
\begin{align*}
u(t)&=b\sin(\beta I(t)+a)-\frac{\alpha \gamma}{t^{1/2}} e^{-\alpha t}+R_5(t),\\
v(t)&=b\cos(\beta I(t)+a)-\frac{\beta \gamma}{t^{1/2}} e^{-\alpha t}+R_6(t),
\end{align*}
with
$$
 \abs{R_5(t)}+\abs{R_6(t)}\leq C(E_0,c_0)e^{-\alpha t}/(\beta^2 t).
$$
The asymptotics for $y$ and $h$ given in \eqref{asym-y-0} and
\eqref{asym-h-0}  are a direct consequence of \eqref{def-u-v} and
the above identities and bounds.

Finally, we compute the value of $b$. In fact, from \eqref{asym-y-0}
and \eqref{asym-h-0} \bqq \lim_{s\to \infty}(y^2(s)+h^2(s))e^{\alpha
s^2/2}=b^2. \eqq On the other hand, since $y+ih=\bar f f'$ and using
the conservation of energy (\ref{energy-1}) \bqq
\left(y^2(s)+h^2(s)\right)e^{\alpha s^2/2}=|y+ih|^2(s) e^{\alpha
s^2/2}= \abs{f'}^2\abs{f}^2e^{\alpha
s^2/2}=(2E_0-\frac{c_0^2}{4}\abs{f}^2)\abs{f}^2, \eqq so that,
taking the limit as $s\to \infty$ and recalling that $z=|f|^2$,
\eqref{b} follows.
\end{proof}
\begin{remark}\label{rem-continuity}
%
%
%
From the definitions of $b$ in \eqref{b}, and $be^{ia}$ in \eqref{cont} (in terms of $C_{\alpha,c_0}$, $w_1(t_1)$ and $w_{\infty}$ in \eqref{cont}),
it is simple to verify that $b$ and  $be^{i a }$ depend continuously on $\alpha \in [0,1)$,
provided that $z_\infty$ is a continuous function of $\alpha$. In Subsection~\ref{subsec-dependence} we will prove
that $z_{\infty}$ depends continuously on $\alpha$, for $\alpha \in [0,1]$, and establish the continuous dependence of the
 constants $b$ and $be^{ia}$ with respect to the parameter $\alpha$ in Lemma~\ref{lema-cont-a} above.
\end{remark}

In the proof of Proposition~\ref{prop-asymp}, we have used the
following key lemma that establishes the control of
certain integrals by exploiting their oscillatory character.


\begin{lemma}\label{int-osc} With the same notation as in the proof
of Proposition~\ref{cotas}.
\begin{itemize}
\item[\it{(i)}] Let $f\in C^1((t_1,\infty))$ such that
\bqq \abs{f(t)}\leq L/t^{a} \quad {\hbox{and}} \quad
\abs{f'(t)}\leq
L\left(\frac{\alpha}{t^a}+\frac1{t^{a+1}}\right), \eqq
 for some
constants $L$, $a>0$. Then, for all $t\geq t_1$ and $l\geq 1$
$$
\int_{t}^\infty e^{-\int_{t_1}^\tau\lambda_{+} }
e^{-l\alpha\tau}f(\tau)\,d\tau= \frac{1}{(\alpha +i
\beta)}{e^{-\int_{t_1}^t\lambda_{+} }e^{-l\alpha t}f(t)}+F(t),
$$
with \bq\label{F-1} \abs{F(t)}\leq \frac{ C(l,a,c_0)Le^{-l\alpha
t}}{\beta t^{a}}. \eq
\item[{\it{(ii)}}] If in addition $f\in C^2((t_1,\infty))$,
\bq\label{segunda-der} \abs{f'(t)}\leq L/t^{a+1} \quad
{\hbox{and}}\quad \abs{f''(t)}\leq L\left(
\frac{\alpha}{t^{a+1}}+\frac{1}{t^{a+2}}\right), \eq then
\begin{equation}
\label{F-2}
\abs{F(t)}\leq \frac{ C(l,a,c_0)Le^{-l\alpha t}}{\beta t^{a+1}}.
\end{equation}
\end{itemize}
Here $C(l,a,c_0)$ is a positive constant depending only on $l$, $a$ and $c_0$.
\end{lemma}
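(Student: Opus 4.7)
My plan is to perform an integration by parts, exploiting the identity
\[\frac{d}{d\tau}\left[e^{-\int_{t_1}^{\tau}\lambda_{+}}e^{-l\alpha\tau}\right]=-(\lambda_{+}(\tau)+l\alpha)\,e^{-\int_{t_1}^{\tau}\lambda_{+}}e^{-l\alpha\tau},\]
which realises the oscillatory factor as an exact derivative modulo the prefactor $\lambda_{+}+l\alpha$. Setting $h(\tau)=e^{-\int_{t_1}^{\tau}\lambda_{+}}e^{-l\alpha\tau}$ and integrating by parts on $[t,\infty)$, I obtain
\[\int_{t}^{\infty}f(\tau)h(\tau)\,d\tau=\frac{f(t)h(t)}{\lambda_{+}(t)+l\alpha}+\int_{t}^{\infty}\left(\frac{f}{\lambda_{+}+l\alpha}\right)'(\tau)\,h(\tau)\,d\tau,\]
where the boundary term at infinity vanishes thanks to $|h(\tau)|\le 2e^{-l\alpha\tau}$ from \eqref{cota-e} together with the decay of $|f|$.

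For part (i), I recast the leading term into the claimed form by writing
\[\frac{1}{\lambda_{+}(t)+l\alpha}=\frac{1}{\alpha+i\beta}+\left[\frac{1}{\lambda_{+}(t)+l\alpha}-\frac{1}{\alpha+i\beta}\right].\]
Using the definitions of $K$, $\Delta$ and $\lambda_{+}$ together with \eqref{cota-Delta}, \eqref{key} and \eqref{cota-K}, the bracketed difference is controlled by $C(l,c_0)(\alpha+1/t)/\beta$; combined with $|f(t)h(t)|\le 2Le^{-l\alpha t}/t^{a}$ this contribution sits inside the claimed bound for $F(t)$. The remaining integral is handled by expanding
\[\left(\frac{f}{\lambda_{+}+l\alpha}\right)'=\frac{f'}{\lambda_{+}+l\alpha}-\frac{f\lambda_{+}'}{(\lambda_{+}+l\alpha)^{2}}\]
and estimating each piece separately. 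For the first term I use $|\lambda_{+}+l\alpha|^{-1}\le \sqrt{2}/\beta$, the hypothesis on $f'$, and the elementary inequality $\alpha\int_{t}^{\infty}e^{-l\alpha\tau}/\tau^{a}\,d\tau\le e^{-l\alpha t}/(lt^{a})$ together with a direct bound for the $1/\tau^{a+1}$ integral. For the second term I use the estimate $|\lambda_{+}'|\le C(c_0)(\alpha/\tau+1/\tau^{2})e^{-2\alpha\tau}$ that follows from \eqref{cota-K-prima}: the extra exponential and algebraic decay it provides is precisely what is required to absorb the excess $1/\beta$ produced by $|\lambda_{+}+l\alpha|^{-2}\le 2/\beta^{2}$.

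For part (ii), I perform a second integration by parts on the remainder from part (i). The sharpened hypotheses $|f'|\le L/\tau^{a+1}$ and $|f''|\le L(\alpha/\tau^{a+1}+1/\tau^{a+2})$, combined with the bound on $\lambda_{+}''$ that follows from \eqref{cota-K-prima-2}, show that $f/(\lambda_{+}+l\alpha)$ satisfies the hypotheses of part (i) with $a$ replaced by $a+1$, so the same argument produces the claimed additional factor $1/t$.

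The main obstacle is the careful tracking of the dependence on $\beta$: since $|\lambda_{+}+l\alpha|$ may be as small as $\beta/\sqrt{2}$, each integration by parts naively costs a factor $1/\beta$, whereas the statement permits only a single such factor. The resolution relies on the fact that $\lambda_{+}'$ carries the weights $e^{-2\alpha\tau}$ and $1/\tau$, and that the specific choice of $t_1=s_1^{2}/4$ in Proposition~\ref{prop-asymp} forces $\beta t\ge c_0^{2}(1-\beta)$ when $\beta<1/2$ while keeping $1/\beta^{2}$ bounded from above when $\beta\ge 1/2$; treating the two regimes separately yields the single $1/\beta$ appearing in \eqref{F-1}.
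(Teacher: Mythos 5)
Your part (i) is essentially correct and is the paper's own computation in a mildly different arrangement: the paper integrates by parts against $e^{-\int_{t_1}^{\tau}\lambda_+}$ alone and then moves the term $l\alpha f/(i\beta)$ to the left-hand side, so that the whole error $F(t)$ comes out with an explicit prefactor $i\beta/(l\alpha+i\beta)$, whereas you integrate by parts against the full factor $e^{-\int_{t_1}^{\tau}\lambda_+}e^{-l\alpha\tau}$ and then compare $1/(\lambda_+(t)+l\alpha)$ with $1/(\alpha+i\beta)$. For \eqref{F-1} your accounting closes: the coefficient difference, the $f'/(\lambda_++l\alpha)$ term, and the $f\lambda_+'/(\lambda_++l\alpha)^2$ term (the last one thanks to the factor $e^{-2\alpha\tau}$ in \eqref{cota-K-prima} together with $\beta t\gtrsim c_0^2$ at $t\geq t_1$) are all within $C(l,a,c_0)Le^{-l\alpha t}/(\beta t^a)$.

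The gap is in part (ii), and it is a matter of $\beta$-bookkeeping. In the paper's version the prefactor $i\beta/(l\alpha+i\beta)$, of modulus at most $\beta$ since $|l\alpha+i\beta|\geq 1$, is precisely what converts the $1/\beta^2$ produced by the second integration by parts into the single $1/\beta$ of \eqref{F-2}; your decomposition has no such prefactor. Concretely, integrating by parts a second time in $\int_t^\infty \frac{f'}{\lambda_++l\alpha}\,h\,d\tau$ produces the boundary term $f'(t)h(t)/(\lambda_+(t)+l\alpha)^2$ and an integral containing $f''/(\lambda_++l\alpha)^2$; with your bound $|\lambda_++l\alpha|^{-1}\leq \sqrt2/\beta$ these are of size $Le^{-l\alpha t}/(\beta^2 t^{a+1})$ and, unlike the $\lambda_+'$ term, they carry no factor $e^{-2\alpha\tau}$. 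Your two-regime argument does not rescue them: for $\beta<1/2$ the inequality $\beta t\geq c_0^2(1-\beta)$ only yields $1/(\beta^2t^{a+1})\leq C(c_0)/(\beta t^{a})$, i.e.\ it trades the extra $1/\beta$ for exactly the power of $t$ that part (ii) must gain. (Relatedly, the claim that $f/(\lambda_++l\alpha)$ satisfies the part (i) hypotheses with $a$ replaced by $a+1$ is not correct as stated — that quotient decays only like $t^{-a}$; you must mean $f'/(\lambda_++l\alpha)$ or $\bigl(f/(\lambda_++l\alpha)\bigr)'$, and under your estimates its constant is of order $L/\beta$, which reproduces the same $1/\beta^2$. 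Also, replacing $1/(\lambda_++l\alpha)$ by $1/(\alpha+i\beta)$ costs a term of order $(l-1)\alpha\,Le^{-l\alpha t}/t^{a}$, which is harmless for \eqref{F-1} but compatible with \eqref{F-2} only for $l=1$ — the case actually used.) The argument can be repaired within your framework by noting that $\Re(\lambda_++l\alpha)\geq l\alpha\geq\alpha$ and $\Im(\lambda_++l\alpha)=\beta\Delta^{1/2}\geq\beta/\sqrt2$, so by the normalization $\alpha^2+\beta^2=1$ one has $|\lambda_++l\alpha|\geq \tfrac{1}{\sqrt2}\max(\alpha,\beta)\geq\tfrac12$; with this absolute lower bound those denominators cost no $\beta$ at all and your scheme closes (indeed with a bound even better than \eqref{F-2}). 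Alternatively, follow the paper and factor out $1+\tfrac{l\alpha}{i\beta}$ before estimating, so that $F(t)$ carries the compensating factor $\beta$.
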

\begin{proof}
Define $\lambda=\lambda_{+}$. Recall (see proof of
Proposition~\ref{cotas}) that
$$
 \lambda_{+} =\frac{\alpha K}{2}+i\beta\Delta^{1/2} \quad {\hbox{and}}\quad
 \Delta=1+K-\frac{\alpha^2 K^2}{4\beta^2},\quad {\hbox{with}}\quad K=c_0^2 \frac{e^{-2\alpha t}}{t}.
$$
Setting $R_\lambda=1/\lambda-1/(i\beta)$ and integrating by parts,
we obtain
\begin{align*}
\left(1+\frac{l\alpha}{i\beta}\right)\int_{t}^\infty
e^{-\int_{t_1}^\tau\lambda }  e^{-l\alpha\tau}f(\tau)\,d\tau
={}&{e^{-\int_{t_1}^t\lambda }e^{-l\alpha t}f(t)} \left( \frac{1}{i\beta}+R_\lambda \right)   \\
& +\int_{t}^\infty e^{-\int_{t_1}^\tau\lambda } e^{-l\alpha\tau}\left(-l\alpha f R_\lambda+\frac{f'}{\lambda}-\frac{f\lambda'}{\lambda^2}\right)d\tau,
\end{align*}
or, equivalently,
$$
  \int_{t}^\infty e^{-\int_{t_1}^\tau\lambda }  e^{-\alpha\tau}f(\tau)\,d\tau =
  \frac{1}{l\alpha+i\beta} e^{-\int_{t_1}^t\lambda }  e^{-\alpha t}f(t)
  + F(t),
$$
with
$$
 F(t)= \frac{i\beta}{l\alpha+i\beta}\left(
  e^{-\int_{t_1}^t\lambda }  e^{-l\alpha t}R_\lambda f +
  \int_{t}^\infty e^{-\int_{t_1}^\tau\lambda } e^{-l\alpha
  \tau}\left(-l\alpha f
  R_\lambda+\frac{f'}{\lambda}-\frac{f\lambda'}{\lambda^2}\right)d\tau
  \right).
$$
Using (\ref{cota-Delta}), \eqref{key} and (\ref{cota-K-prima}), it is easy
to check that for all $t\geq t_1$
\begin{equation}
  \label{ast-2}
  \abs{\lambda}\geq \frac{\beta}{\sqrt{2}}
  \quad {\hbox{and}}\quad
  \abs{\lambda '}\leq 3c_0^2\left( \frac{2\alpha}{t}+\frac{1}{t^2} \right).
\end{equation}
On the other hand,
$$
  |R_\lambda|=\left|  \frac{i\beta-\lambda}{i\beta \lambda} \right|\leq
  \frac{\sqrt{2}}{\beta^2} \left(\beta |1-\Delta^{1/2}|+\frac{\alpha K}{2}\right),
$$
with, using the definition of $\Delta$ in (\ref{cota-Delta}) and
(\ref{key}),
$$
   \frac{\alpha K}{2}\leq \frac{c_0^2}{2t}\quad {\hbox{and}}\quad
   \abs{1-\Delta^{1/2}}=\frac{\abs{1-\Delta}}{1+\Delta^{1/2}}\leq \abs{1-\Delta}
   \leq \frac{c_0^2}{t}+ \frac{c_0^2}{4 \beta t}\,\left( \frac{c_0^2}{\beta t}\right)
   \leq \frac{2c_0^2}{\beta t}.
$$
Previous lines show that
\begin{equation}
 \label{ast-4}
  \abs{R_\lambda}\leq \frac{10 c_0^2}{\beta^2 t}.
\end{equation}
The estimate (\ref{F-1}) easily follows from the bounds
(\ref{cota-e}), (\ref{ipp2}), (\ref{ast-2}), (\ref{ast-4}) and the
hypotheses on $f$.
To obtain part {\it{(ii)}} we only need to improve the estimate for
the term
$$
 \int_{t}^\infty e^{-\int_{t_1}^\tau\lambda }  e^{-l\alpha \tau}\frac{f'}{\lambda}\,d\tau
$$
in the above argument. In particular, it suffices to prove that
$$
  \left|\int_{t}^\infty e^{-\int_{t_1}^\tau\lambda }  e^{-l\alpha \tau}\frac{f'}{\lambda}\right|
  \leq C(l,c_0,a ) \frac{L e^{-{l\alpha t}}}{\beta^2 t^{a +1}}.
$$
Now, consider the function $g=f'/\lambda$. Notice that from
(\ref{key}), (\ref{ast-2}) and the hypotheses on $f$ in
(\ref{segunda-der}), we have
$$
  \abs{g(t)}\leq \frac{\sqrt{2} L}{\beta t^{a +1}}
$$
and
\begin{align*}
  \abs{g'(t)}
  \leq&
  \frac{\sqrt{2}}{\beta} L\left( \frac{\alpha}{t^{a+1}}+\frac{1}{t^{a + 2}} \right)
  + \frac{6L}{\beta} \left(\frac{c_0^2}{\beta t}\right) \left(
  \frac{2\alpha}{t^{a +1}} +\frac{1}{t^{a +2}} \right)
     \\
  \leq &\frac{14 L}{\beta} \left( \frac{2\alpha}{t^{a +1}} +\frac{1}{t^{a +2}}  \right).
\end{align*}
Therefore, from part {\it{(i)}}, we obtain
$$
\left|\int_{t}^\infty e^{-\int_{t_1}^\tau\lambda }  e^{-l\alpha \tau}\frac{f'}{\lambda}\right|
  \leq C(l,c_0,a ) L e^{-{l\alpha t}} \left(
  \frac{1}{\beta t^{a +1}} + \frac{1}{\beta^2 t^{a +1}} \right)
  \leq \frac{C(l,c_0,a)Le^{-l\alpha t}}{\beta^2 t^{a+1}},
$$
as desired.
\end{proof}

We remark that if $\alpha\in[0,1/2]$, the asymptotics in Proposition~\ref{prop-asymp} are uniform in $\alpha$.
Indeed,
$$\max_{\alpha\in[0,1/2]}\left\{4\sqrt{8+c_0^2}, 2c_0\left(\frac{1}{\beta}-1\right)^{1/2}\right\}=4\sqrt{8+c_0^2}=s_0.$$
Therefore in this situation we can omit the dependence on $s_1$ in the function $\phi(s_1;s)$, because the asymptotics are valid with
\begin{equation}\label{phi2}
\phi(s):=\phi(s_0;s)=a+\beta \int_{s_0^2/4}^{s^2/4}\sqrt{1+c_0^2\frac{e^{-2\alpha t}}{t}}\,dt.
\end{equation}

We continue to show that the factor $1/\beta^2$ in the big-$O$ in
formulae \eqref{asym-y-0} and \eqref{asym-h-0} are due to the method
used and this factor can be avoided if $\alpha$ is far from zero.
More precisely, we have the following:
\begin{lemma}\label{lema-sin-beta} Let $\alpha\in[1/2,1)$. With the same notation as in
Propositions~\ref{cotas} and~\ref{prop-asymp}, we have the following asymptotics: for all
$s\geq s_0$,
\begin{align}
y(s)&=be^{-\alpha s^2/4}\sin(\phi(s))-\frac{2\alpha \gamma}{s} e^{-\alpha s^2/2}+O\left(\frac{e^{-\alpha s^2/2}}{s^2}\right),\label{asym-y-2}\\
h(s)&=be^{-\alpha s^2/4}\cos(\phi(s))-\frac{2\beta \gamma}{s} e^{-\alpha s^2/2}+O\left(\frac{e^{-\alpha s^2/2}}{s^2}\right).\label{asym-h-2}
\end{align}
Here, the function $\phi$ is defined by \eqref{phi2} and the bounds controlling the error terms depend on $c_0$, and the energy $E_0$, and are independent of $\alpha\in[1/2,1)$
\end{lemma}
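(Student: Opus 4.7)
The plan is to bypass the diagonalization used in Proposition~\ref{prop-asymp} -- which is what produced the factor $1/\beta^2$ via the discriminant $\Delta = 1 + K - \alpha^2 K^2/(4\beta^2)$ -- and instead to work directly with the complex scalar unknown $w = y + ih$. Combining \eqref{y-1} and \eqref{h-1} yields the first-order linear ODE
\[
\bigl(w\, e^{(\alpha+i\beta)s^2/4}\bigr)' \;=\; e^{(-\alpha+i\beta)s^2/4}\Bigl(\gamma - \tfrac{c_0^2}{2}(z-z_\infty)\Bigr).
\]
Since $|z - z_\infty| \leq C\,e^{-\alpha s^2/4}/s$ by Proposition~\ref{cotas}(iii), the right-hand side is integrable, so $W_\infty := \lim_{s\to\infty} w(s)\, e^{(\alpha+i\beta)s^2/4}$ exists and
\[
w(s) = e^{-(\alpha+i\beta)s^2/4}\Bigl(W_\infty - \int_s^{\infty} e^{(-\alpha+i\beta)\sigma^2/4}\bigl(\gamma - \tfrac{c_0^2}{2}(z-z_\infty)\bigr)\,d\sigma\Bigr).
\]

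The essential observation is that $|{-\alpha+i\beta}| = 1$, so integration by parts against $e^{(-\alpha+i\beta)\sigma^2/4}$ produces no inverse powers of $\alpha$ or $\beta$. A single pass gives
\[
\int_s^{\infty} e^{(-\alpha+i\beta)\sigma^2/4}\gamma\,d\sigma = \frac{2\gamma(\alpha+i\beta)}{s}\,e^{(-\alpha+i\beta)s^2/4} + O\!\left(\frac{e^{-\alpha s^2/4}}{s^3}\right),
\]
with constant depending only on $c_0$. For the $(z-z_\infty)$-contribution I would insert the pointwise bound above and reduce to $\int_s^{\infty} e^{-\alpha\sigma^2/2}/\sigma\,d\sigma$; one further integration by parts bounds this by $e^{-\alpha s^2/2}/(\alpha s^2)$, and here is precisely where the hypothesis $\alpha \geq 1/2$ enters -- it lets me replace $1/\alpha$ by an absolute constant. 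Multiplying through by $e^{-(\alpha+i\beta)s^2/4}$ then gives
\[
w(s) = W_\infty\, e^{-(\alpha+i\beta)s^2/4} - \frac{2\gamma(\alpha+i\beta)}{s}\,e^{-\alpha s^2/2} + O\!\left(\frac{e^{-\alpha s^2/2}}{s^2}\right),
\]
with constants depending only on $c_0$ and $E_0$.

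To complete the statement, I will write $W_\infty = b\, e^{i(\pi/2 - \widetilde a)}$ with $b := |W_\infty|\geq 0$, so that taking real and imaginary parts recovers the claimed form for $y$ and $h$ but with simple phase $\beta s^2/4 + \widetilde a$ in place of $\phi(s)$. Using the expansion $\sqrt{1+K(t)}-1 = K(t)/2 + O(K(t)^2)$ with $K(t) = c_0^2 e^{-2\alpha t}/t$, I would show
\[
\beta\int_{s_0^2/4}^{s^2/4}\bigl(\sqrt{1+K(t)}-1\bigr)\,dt = C(\alpha, c_0) + O\!\left(\frac{e^{-\alpha s^2/2}}{s^2}\right),
\]
with bounds uniform in $\alpha\in[1/2,1)$ because $\beta/\alpha \leq \sqrt 3$ on this interval. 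Consequently $\phi(s) = \beta s^2/4 + \widehat a + O(e^{-\alpha s^2/2}/s^2)$ for some constant $\widehat a$, and choosing the constant $a$ in \eqref{phi2} so that $\widehat a = \widetilde a$ recovers \eqref{asym-y-2}--\eqref{asym-h-2} via the elementary estimate $|\sin\phi - \sin\widehat\phi|\leq|\phi-\widehat\phi|$. The identification $b^2 = (2E_0 - c_0^2 z_\infty/4)\,z_\infty$ follows from $|W(s)|^2 = |w(s)|^2 e^{\alpha s^2/2} = (2E_0 - c_0^2 z/4)\, z$ (by the energy conservation in Proposition~\ref{cotas}(i)) and passing to the limit $s\to\infty$.

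The principal obstacle will be controlling the self-referential $(z-z_\infty)$ term in $w$'s equation without the matrix framework: one must iterate the bound of Proposition~\ref{cotas}(iii) once but not so crudely that the $1/\alpha$ from the Gaussian integration by parts of $e^{-\alpha\sigma^2/2}/\sigma$ worsens the error; the hypothesis $\alpha\geq 1/2$ is exactly what is needed to absorb that factor into a universal constant, yielding a remainder bound depending only on $c_0$ and $E_0$ as claimed.
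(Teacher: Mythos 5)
Your proposal is correct and follows essentially the same route as the paper's proof: the paper likewise works directly with $w=y+ih$ and the exact relation $\bigl(we^{(\alpha+i\beta)s^2/4}\bigr)'=e^{(-\alpha+i\beta)s^2/4}\bigl(\gamma-\tfrac{c_0^2}{2}(z-z_\infty)\bigr)$, integrates from $s$ to $\infty$, integrates the $\gamma$-term by parts, and controls the $(z-z_\infty)$-term via \eqref{cota} and \eqref{int-conv}, absorbing the $1/\alpha$ with $1\leq 2\alpha$ exactly as you do, before converting the phase through \eqref{asymp-phi}. The only immaterial differences are that the paper identifies the limit $\lim_{s\to\infty}we^{(\alpha+i\beta)s^2/4}=ib e^{-i\tilde a}$ directly from Proposition~\ref{prop-asymp} (so that $a$ and $b$ are literally the constants constructed there), whereas you re-derive its existence from integrability and recover $b$ from the energy identity, and that the clean equation for $w$ comes from \eqref{y}--\eqref{h} rather than \eqref{y-1}--\eqref{h-1}.
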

\begin{proof}
\noindent Let $\alpha\in [1/2,1)$ and define $w=y+ih$. From
Proposition~\ref{prop-asymp} and \eqref{asymp-phi}, we have that for
all $\alpha \in [1/2,1)$
\bq
\label{w-tilde-a}
\lim_{s\to\infty}we^{(\alpha+i\beta)s^2/4}=b i e^{-i\tilde a},
\eq
where $\tilde a:=a+C(\alpha,c_0)$,  $a$ and $b$ are the constants defined in Proposition~\ref{prop-asymp}
and  $C(\alpha,c_0)$ is the constant in \eqref{asymp-phi}. Then, since $w$ satisfies
\bq
\label{der-w}
\left(
we^{(\alpha+i\beta)s^2/4}\right)'=e^{(-\alpha+i\beta) s^2/4}
\left(\gamma-\frac{c_0^2}{2} (z-z_\infty)\right),
\eq
integrating the above identity between $s$ and infinity,
\bqq
 we^{(\alpha+i\beta)s^2/4}=ib  e^{-i\tilde a}-
\int_s^\infty e^{(-\alpha+i\beta) \sigma^2/4}
\left(\gamma-\frac{c_0^2}{2} (z-z_\infty)\right)d\sigma.
\eqq

Now, integrating by parts and using \eqref{int-conv} (recall that
$1\leq 2\alpha$), we see that
$$
\int_s^\infty e^{(-\alpha+i\beta) \sigma ^2/4}\,d\sigma= 2(\alpha+i\beta) \frac{e^{(-\alpha+i\beta)
s^2/4}}{s}+O\left(\frac{e^{-\alpha s^2/4}}{s^3}\right), \qquad \forall \, s\geq s_0.
$$
Next, notice that from (\ref{cota}) in Proposition~\ref{cotas}, we
also obtain
$$
 \int_s^\infty e^{(-\alpha+i\beta) \sigma^2/4}
(z-z_\infty)\,d\sigma=O\left(\frac{e^{-\alpha s^2/2}}{s^2}\right),
\quad \forall\, s\geq s_0.
$$
The above argument shows that for all $s\geq s_0$ \bq\label{B3}
w(s)=ibe^{-\alpha s^2/4}e^{-i(\tilde a+\beta
s^2/4)}-\frac{2(\alpha+i\beta) \gamma}{s} e^{-\alpha
s^2/2}+O\left(\frac{e^{-\alpha s^2/2}}{s^2}\right). \eq The
asymptotics for $y$ and $h$ in the statement of the lemma easily
follow from (\ref{B3}) bearing in mind that $w=y+ih$ and recalling
that the function
$\phi$ behaves like (\ref{asymp-phi}) when
$\alpha>0$.
\end{proof}
%

In the following corollary we summarize the asymptotics for $z$, $y$
and $h$ obtained in this section. Precisely, as a consequence of
Proposition~\ref{cotas}-{\it{(iii)}}, Proposition~\ref{prop-asymp}
and Lemma~\ref{lema-sin-beta}, we have the following:
\begin{cor}\label{cor-asymp}
Let $\alpha\in[0,1)$. With the same notation as before,  for all
$s\geq s_0=4\sqrt{8+c_0^2}$,
\begin{align}
y(s)&=be^{-\alpha s^2/4}\sin(\phi(s))-\frac{2\alpha \gamma}{s} e^{-\alpha s^2/2}+O\left(\frac{e^{-\alpha s^2/2}}{s^2}\right),\label{asym-y}\\
h(s)&=be^{-\alpha s^2/4}\cos(\phi(s))-\frac{2\beta \gamma}{s} e^{-\alpha s^2/2}+O\left(\frac{e^{-\alpha s^2/2}}{s^2}\right),\label{asym-h}\\
  z(s)&=z_{\infty}-\frac{4b}{s}e^{-\alpha s^2/4}(\alpha \sin(\phi(s))+\beta \cos(\phi(s)))+\frac{4\gamma e^{-\alpha s^2/2} }{s^2}
  +O\left(\frac{e^{-\alpha s^2/4}}{s^3}\right),
  \label{asym-z}
  \end{align}
where
$$
\phi(s)=a+\beta \int_{s_0^2/4}^{s^2/4}\sqrt{1+c_0^2\frac{e^{-2\alpha t}}{t}}\,dt,
$$
for some constant $a\in[0,2\pi)$,
$$
 b=z_\infty^{1/2}\left( 2E_0-\frac{c_0^2}{4}z_\infty \right)^{1/2}, \qquad \gamma = 2E_0-\frac{c_0^2}{2}z_\infty
\qquad {\hbox{and}}\qquad z_\infty=\lim_{s\rightarrow\infty} z(s).
$$
Here, the bounds controlling the error terms depend on $c_0$ and the
energy $E_0$, and are independent of $\alpha\in [0,1)$.
\end{cor}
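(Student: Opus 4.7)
The plan is to assemble the corollary by splicing together Proposition~\ref{prop-asymp} on the regime $\alpha\in[0,1/2]$, Lemma~\ref{lema-sin-beta} on the regime $\alpha\in[1/2,1)$, and then using Proposition~\ref{cotas}-\textit{(iii)} to deduce the expansion for $z$ from those for $y$ and $h$.

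First, for $\alpha\in[0,1/2]$ we have $\beta^2\geq 3/4$, hence the constant $2c_0(1/\beta-1)^{1/2}$ is dominated by $4\sqrt{8+c_0^2}$, so $s_1=s_0$ and the phase in Proposition~\ref{prop-asymp} is exactly the function $\phi$ defined in \eqref{phi2}. Moreover the $1/\beta^2$ in the error terms of \eqref{asym-y-0}--\eqref{asym-h-0} is bounded by $4/3$, so it can be absorbed into a constant depending only on $c_0$ and $E_0$, producing exactly \eqref{asym-y}--\eqref{asym-h}. For $\alpha\in[1/2,1)$, Lemma~\ref{lema-sin-beta} already gives \eqref{asym-y-2}--\eqref{asym-h-2} with the same phase $\phi$ defined in \eqref{phi2} and with bounds uniform in $\alpha\in[1/2,1)$. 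Combining these two regimes yields \eqref{asym-y} and \eqref{asym-h} for all $\alpha\in[0,1)$ with error terms controlled by a constant depending only on $c_0$ and $E_0$.

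Next, to obtain \eqref{asym-z}, I would substitute the just-established asymptotics for $y$ and $h$ into the identity
\begin{equation*}
z(s)-z_\infty=-\frac{4}{s}(\alpha y(s)+\beta h(s))-\frac{4\gamma}{s^2}e^{-\alpha s^2/2}+R_0(s),
\end{equation*}
from Proposition~\ref{cotas}-\textit{(iii)}, where $|R_0(s)|\leq C(E_0,c_0)\,e^{-\alpha s^2/4}/s^3$. The main-order contributions combine as
\begin{equation*}
-\frac{4}{s}(\alpha y+\beta h)=-\frac{4b}{s}e^{-\alpha s^2/4}(\alpha\sin\phi+\beta\cos\phi)+\frac{8(\alpha^2+\beta^2)\gamma}{s^2}e^{-\alpha s^2/2}+O\!\left(\frac{e^{-\alpha s^2/2}}{s^3}\right),
\end{equation*}
and since $\alpha^2+\beta^2=1$, the $8\gamma/s^2$ contribution combines with the $-4\gamma/s^2$ term in $R_0$ to give the $+4\gamma e^{-\alpha s^2/2}/s^2$ in \eqref{asym-z}. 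The remaining error from this substitution is $O(e^{-\alpha s^2/2}/s^3)$, which is dominated by the $O(e^{-\alpha s^2/4}/s^3)$ error already present in $R_0(s)$.

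The only mild subtlety, which I expect to be the main bookkeeping point rather than a real obstacle, is to verify that after absorbing the factor $1/\beta^2$ in the regime $\alpha\in[0,1/2]$ and keeping Lemma~\ref{lema-sin-beta}'s estimates in the regime $\alpha\in[1/2,1)$, the two expansions match up pointwise on their common phase $\phi$ defined by \eqref{phi2}. This is immediate since in both propositions the phase is constructed with $s_1=s_0$ once $\alpha\leq 1/2$, and Lemma~\ref{lema-sin-beta} explicitly takes $s_1=s_0$. The expressions for $b$, $\gamma$, and $z_\infty$ in the statement are precisely those introduced in Proposition~\ref{cotas}-\textit{(ii)}--\textit{(iii)} and Proposition~\ref{prop-asymp}, so no further identification is needed, and the corollary follows.
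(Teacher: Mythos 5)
Your argument is correct and is essentially the paper's own proof: the corollary is stated there precisely as the combination of Proposition~\ref{prop-asymp} (where, as you note, $s_1=s_0$ and $1/\beta^2\leq 4/3$ once $\alpha\in[0,1/2]$), Lemma~\ref{lema-sin-beta} for $\alpha\in[1/2,1)$, and the identity \eqref{z-z-inf-s3} of Proposition~\ref{cotas}-\textit{(iii)}, with $\alpha^2+\beta^2=1$ giving $8\gamma-4\gamma=4\gamma$ in the $s^{-2}$ term. The only cosmetic slip is that the $-4\gamma e^{-\alpha s^2/2}/s^2$ term you combine with is the explicit second term of that identity rather than part of $R_0$, which does not affect the computation.
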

\begin{remark} In the case when $s<0$, the same arguments
to the ones leading to the asymptotics in the above corollary will
lead to an analogous asymptotic behaviour for the variables $z$, $h$
and $y$ for $s<0$. As mentioned at the beginning of
Subsection~\ref{second}, here we have reduced ourselves to the case
of $s>0$ when establishing the asymptotic behaviour of the latter
quantities due to the parity of the solution we will be applying
these results to.
\end{remark}
\begin{remark}\label{rem-asymp}
The asymptotics in Corollary~\ref{cor-asymp} lead to the asymptotics for the solutions $f$ of the equation \eqref{eq-f}, at least if
$\abs{f}_{\infty}:=z_\infty^{1/2}$ is strictly positive. Indeed, this implies that there exists $s^*\geq s_0$
such that $f(s)\neq 0$ for all $s\geq s^*$. Then
writing $f$ in its polar form $f=\rho\exp(i\theta)$,
we have $\rho^2 \theta'=\Im(\bar f f')$. Hence, using  \eqref{def-z}, we obtain $\rho=z^{1/2}$
and $\theta'=h/z$. Therefore, for all $s\geq s^*$,
\bq\label{phase-f}
\theta(s)-\theta(s^*)=\int_{s^*}^s \frac{h(\sigma)}{z(\sigma)}\,d\sigma.
\eq
Hence, using the asymptotics for $z$ and $h$ in Corollary~\ref{cor-asymp}, we can obtain the asymptotics for $f$.
In the case that $\alpha\in(0,1]$, we can also show that the phase converges. Indeed,
the asymptotics in Corollary~\ref{cor-asymp} yield that the integral in \eqref{phase-f} converges as $s\to \infty$ for $\alpha>0$,
and we conclude that there exists a constant $\theta_\infty\in \R$ such that
$$f(s)=z(s)^{1/2} \exp\left(
i\theta_\infty-i\int_{s}^\infty\frac{h(\sigma)}{z(\sigma)}\,d\sigma
\right),\quad \text{for all }\ s\geq s^*.$$
The asymptotics for $f$ is obtained by plugging the asymptotics in Corollary~\ref{cor-asymp} into the above expression.
\end{remark}
%
\subsection{The second-order equation. Dependence on the parameters}\label{subsec-dependence}
The aim of this subsection is to study the dependence of the $f$, $z$, $y$ and $h$
on the parameters $c_0>0$ and $\alpha\in[0,1]$. This will allow us to pass to the limit $\alpha\to 1^-$ in the asymptotics in Corollary~\ref{cor-asymp}
and will give us the elements for the proofs of Theorems~\ref{thm-c-0} and \ref{thm-alpha1-2}.

\subsubsection{Dependence on $\alpha$}
We will denote by $f(s,\alpha)$ the solution of
\eqref{eq-f} with some initial conditions $f(0,\alpha)$,
$f'(0,\alpha)$ that are independent of $\alpha$.
Indeed, we are interested  in initial conditions that depend only on $c_0$ (see \eqref{ic1}--\eqref{ic3}).
Moreover, in view of \eqref{energy-i}, we assume that the energy $E_0$ in \eqref{energy-1} is a function of $c_0$.
In order to simplify the notation, we denote with a subindex $\alpha$ the
derivative with respect to $\alpha$ and by $'$ the
derivative with respect to $s$. Analogously to Subsection~\ref{second}, we define
\bq\label{def-z-alpha} z(s,\alpha)=\abs{f(s,\alpha)}^2, \quad
y(s,\alpha)=\Re(\bar f(s,\alpha)f'(s,\alpha)),\quad
h(s,\alpha)=\Im(\bar f(s,\alpha)f'(s,\alpha))
\eq
and
\bqq
z_{\infty}(\alpha)=\lim_{s\to\infty}\abs{f(s,\alpha)}^2.
\eqq
Observe that in Proposition~\ref{cotas}-$(ii)$, we proved the existence of $z_\infty(\alpha)$, for $\alpha \in [0,1)$.
For $\alpha\in(0,1]$,  the estimates in \eqref{est-y} hold true and hence $z(s,\alpha)$ is a bounded function whose derivative
decays exponentially. Therefore, it admits a limit at infinity for all $\alpha\in[0,1]$ and  $z_\infty(1)$ is well-defined.

The next lemma provides
estimates for $z_\alpha$, $h_\alpha$ and $y_\alpha$.

\begin{lema}\label{lemma-z-alpha}
Let $\alpha\in(0,1)$. There exists a constant $C(c_0)$, depending on
$c_0$ but not on $\alpha$, such that for all $s\geq 0$,
\begin{align}
\left|z_\alpha(s,\alpha)\right|\leq C(c_0)\min\left\{\frac{s^2}{\sqrt{1-\alpha}}+s^3,  \frac{s^2}{\sqrt{\alpha(1-\alpha)}}, \frac1{\alpha^2\sqrt{1-\alpha}} \right\},\label{z-alpha}\\
\left|y_\alpha(s,\alpha)\right|+\left|h_\alpha(s,\alpha)\right|\leq C(c_0)e^{-\alpha s^2/4}\min\left\{\frac{s^2}{\sqrt{1-\alpha}}+s^3,  \frac{s^2}{\sqrt{\alpha(1-\alpha)}} \right\}\label{h-alpha}.
\end{align}
\end{lema}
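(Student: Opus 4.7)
The plan is to lift the analysis to the level of $f_\alpha:=\partial_\alpha f$ and to exploit the fact that $f_\alpha$ solves the same ODE \eqref{eq-f} with a computable source. Since $\beta=\sqrt{1-\alpha^2}$ gives $\partial_\alpha\beta=-\alpha/\beta$, differentiating \eqref{eq-f} in $\alpha$ yields
\[
  f_\alpha''+\tfrac{s}{2}(\alpha+i\beta)f_\alpha'+\tfrac{c_0^2}{4}e^{-\alpha s^2/2}f_\alpha=R(s),\quad R(s):=-\tfrac{s(\beta-i\alpha)}{2\beta}f'(s)+\tfrac{c_0^2 s^2}{8}e^{-\alpha s^2/2}f(s),
\]
with zero initial data, since the prescribed initial conditions \eqref{ic1}--\eqref{ic3} depend only on $c_0$. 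The chain rule gives $z_\alpha=2\Re(\bar f f_\alpha)$ and $y_\alpha+ih_\alpha=\bar f_\alpha f'+\bar f f'_\alpha$, so pointwise bounds on $f_\alpha$ and on $e^{\alpha s^2/4}f'_\alpha$ will translate directly into the required bounds on $z_\alpha$, $y_\alpha$, $h_\alpha$.

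The next step repeats the conservation computation of \eqref{energy-1} with a source: the quantity $\mathcal{E}(s):=\tfrac{1}{2}e^{\alpha s^2/2}|f'_\alpha|^2+\tfrac{c_0^2}{8}|f_\alpha|^2$ now satisfies $\mathcal{E}'(s)=e^{\alpha s^2/2}\Re(\bar f'_\alpha(s)R(s))$. Combining Cauchy--Schwarz with $e^{\alpha\sigma^2/4}|f'_\alpha|\le\sqrt{2\mathcal{E}}$ and a standard bootstrap on $\sup_{\sigma\le s}\mathcal{E}(\sigma)$ yields
\[
  |f_\alpha(s)|+e^{\alpha s^2/4}|f'_\alpha(s)|\le C(c_0)\int_0^s e^{\alpha\sigma^2/4}|R(\sigma)|\,d\sigma.
\]
Using $|\beta-i\alpha|=1$ and the bounds $|f|\le C(c_0)$, $|f'|\le C(c_0)e^{-\alpha\sigma^2/4}$ from Proposition~\ref{cotas}, one gets $e^{\alpha\sigma^2/4}|R(\sigma)|\le C(c_0)(\sigma/\sqrt{1-\alpha}+\sigma^2 e^{-\alpha\sigma^2/4})$, and integration (with $\int_0^s\sigma^2 e^{-\alpha\sigma^2/4}\,d\sigma\le\min\{s^3/3,2\sqrt{\pi}/\alpha^{3/2}\}$ and $1/\beta\le 1/\sqrt{1-\alpha}$) gives the polynomial bound $|f_\alpha|+e^{\alpha s^2/4}|f'_\alpha|\le C(c_0)[s^2/\sqrt{1-\alpha}+\min\{s^3,\alpha^{-3/2}\}]$. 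This immediately delivers \eqref{h-alpha} and, via $|z_\alpha|\le 2|f||f_\alpha|$, the first bound in \eqref{z-alpha}; the second bound $s^2/\sqrt{\alpha(1-\alpha)}$ follows because for $s\ge\alpha^{-1/2}$ one has $\alpha^{-3/2}\le s^2/\sqrt{\alpha}$, whence $s^2/\sqrt{1-\alpha}+\alpha^{-3/2}\le 2s^2/\sqrt{\alpha(1-\alpha)}$.

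The hard part will be the uniform-in-$s$ bound $|z_\alpha|\le C(c_0)/(\alpha^2\sqrt{1-\alpha})$. It is already implied by the second bound when $s\le\alpha^{-3/4}$, since there $s^2/\sqrt{\alpha(1-\alpha)}\le 1/(\alpha^2\sqrt{1-\alpha})$; in the complementary range the polynomial bound grows and a sharper argument is needed. My plan is to use $z(s,\alpha)\to z_\infty(\alpha)$ and to bound $z_\infty'(\alpha)$ directly from $z_\infty(\alpha)=1+2\int_0^\infty y(\sigma,\alpha)\,d\sigma$, so that $z_\infty'(\alpha)=2\int_0^\infty Y(\sigma)\,d\sigma$; since taking absolute values is too lossy, the plan is to exploit the oscillations of $W:=Y+iH$ through its Duhamel representation under the integrating factor $e^{(\alpha+i\beta)s^2/4}$ and an integration-by-parts argument in the spirit of Lemma~\ref{int-osc}, recovering the extra factor $\alpha^{1/2}$ that crude estimates miss. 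The remainder $z_\alpha(s)-z_\infty'(\alpha)$ is then controlled by differentiating the asymptotic expansion \eqref{asym-z} in $\alpha$: the extra polynomial factors in $s$ that appear are absorbed into $e^{-\alpha s^2/4}$ precisely in the range $s\ge\alpha^{-3/4}$. The principal obstacle throughout this last step is exactly the tracking of oscillation cancellation, which is the same subtlety that forced Lemma~\ref{int-osc} upon the proof of Proposition~\ref{prop-asymp}.
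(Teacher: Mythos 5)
Your treatment of the first two bounds is sound and is essentially the paper's own argument: differentiate \eqref{eq-f} in $\alpha$ (your source $R$ coincides with the paper's $g$), use the weighted energy identity coming from multiplying by $\bar f_\alpha'\,e^{\alpha s^2/2}$ together with a Gronwall-type step to get $\abs{f_\alpha}+e^{\alpha s^2/4}\abs{f_\alpha'}\leq C(c_0)\int_0^s e^{\alpha\sigma^2/4}\abs{R}$, and then integrate the source bound. This delivers \eqref{h-alpha} and the first two options in \eqref{z-alpha} exactly as in the paper.

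The genuine gap is the third, uniform-in-$s$ bound $C(c_0)/(\alpha^2\sqrt{1-\alpha})$ in \eqref{z-alpha}, for which you only offer a plan, not a proof, and the plan as described does not go through with the tools at hand. Writing $z_\alpha(s)=\bigl(z_\alpha(s)-\tfrac{d z_\infty}{d\alpha}\bigr)+\tfrac{d z_\infty}{d\alpha}$ and ``differentiating the asymptotic expansion \eqref{asym-z} in $\alpha$'' is not justified: the error terms in that expansion are only known to be bounded uniformly in $\alpha$, not differentiable in $\alpha$ with controlled derivatives, and the quantities $a(\alpha)$, $b(\alpha)$, $z_\infty(\alpha)$ entering \eqref{asym-z} have, at this stage, no known $\alpha$-regularity beyond continuity (indeed, the bound on $dz_\infty/d\alpha$ is obtained in the paper only \emph{after} and \emph{from} the present lemma, in Lemma~\ref{z-cont-en-0}, so your route is close to circular). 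Moreover no oscillation argument is needed here: the uniform bound is a one-line consequence of what you have already proved. Since $f_\alpha(0)=0$ and your derivative estimate gives $\abs{f_\alpha'(\sigma)}\leq C(c_0)\,e^{-\alpha\sigma^2/4}\,\sigma^2/\sqrt{\alpha(1-\alpha)}$ for all $\sigma\geq0$, integrating over $[0,\infty)$ and using $\int_0^\infty\sigma^2e^{-\alpha\sigma^2/4}\,d\sigma=2\sqrt{\pi}\,\alpha^{-3/2}$ yields
\begin{equation*}
\abs{f_\alpha(s)}\leq\int_0^s\abs{f_\alpha'(\sigma)}\,d\sigma\leq\frac{C(c_0)}{\sqrt{\alpha(1-\alpha)}}\cdot\frac{2\sqrt{\pi}}{\alpha^{3/2}}=\frac{C(c_0)}{\alpha^{2}\sqrt{1-\alpha}},
\end{equation*}
and then $\abs{z_\alpha}\leq2\abs{f}\abs{f_\alpha}\leq C(c_0)/(\alpha^2\sqrt{1-\alpha})$, since $\abs{f}\leq C(c_0)$ by the conservation law \eqref{energy-1}. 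This is precisely how the paper closes the lemma; the Gaussian decay of $f_\alpha'$ already encodes all the smallness you were trying to extract from oscillations.
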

\begin{proof}
Differentiating \eqref{eq-f0} with respect to $\alpha$,
\bq\label{f-alpha}
f_\alpha''+\frac{s}2(\alpha+i\beta)f_\alpha'+\frac{c_0^2}{4}f_\alpha
e^{-\alpha s^2/2}=g, \eq where
 \bqq
g(s,\alpha)=-\left(1-i\frac{\alpha}{\beta}\right)\frac{s}{2}f'+\frac{c_0^2s^2}{8}f e^{-\alpha s^2/2}. \eqq
 Also, since the initial conditions do not
depend on $\alpha$, \bq\label{ic-alpha}
f_\alpha(0,\alpha)=f'_\alpha(0,\alpha)=0. \eq Using the estimates in
\eqref{est-f} and that $\alpha^2+\beta^2=1$, we obtain
\bq\label{est-g} \abs{g}\leq C(c_0)\left(\frac{s}{\beta}e^{-\alpha
s^2/4}+s^2e^{-\alpha s^2/2}\right), \quad \text{ for all } s\geq 0.
\eq Multiplying \eqref{f-alpha} by $\bar f_\alpha'$ and taking
real part, we have \bq\label{conser-alpha}
\frac12\left(\abs{f_\alpha'}^2\right)'+\frac{\alpha
s}{2}\abs{f_\alpha'}^2+\frac{c_0^2}{8}\left(\abs{f_\alpha}^2\right)'e^{-\alpha
s^2/2}=\Re(g \bar f_\alpha'). \eq Multiplying
\eqref{conser-alpha} by $2e^{\alpha s^2/2}$ and integrating, taking
into account \eqref{ic-alpha}, \bq\label{conser-alpha2}
\abs{f_\alpha'}^2 e^{\alpha
s^2/2}+\frac{c_0^2}{4}\abs{f_\alpha}^2=2\int_0^s e^{\alpha
\sigma^2/2}\Re(g \bar f_\alpha')\, d\sigma. \eq Let us define
the real-valued function $\eta=\abs{f_\alpha'} e^{\alpha s^2/4}$.
Then \eqref{conser-alpha2} yields \bqq \eta^2(s)\leq 2\int_0^s
e^{\alpha \sigma^2/4}\abs{g}\eta\,d\sigma, \quad \text{ for all
}s\geq 0. \eqq Thus, by the Gronwall inequality (see e.g.
\cite[Lemma A.5]{brezis-mon}),
\bq\label{gronwall} \eta(s)\leq
\int_0^s e^{\alpha \sigma^2/4}\abs{g},d\sigma, \quad \text{ for all
}s\geq 0.
\eq
From \eqref{est-g}, \eqref{conser-alpha2} and
\eqref{gronwall}, we conclude that
\begin{eqnarray*}
(\abs{f_\alpha'} e^{\alpha s^2/4}+\frac{c_0}{2}\abs{f_\alpha})^2
&\leq&
2(|f_\alpha|^2e^{\alpha s^2/2}+\frac{c_0^2}{4}|f_\alpha|^2)
   \\
&\leq&
4\int_{0}^{s} e^{\alpha \sigma^2/4} |g| \eta\, d\sigma
\leq 4\left( \sup_{\sigma\in[0,s]} \eta(\sigma)\right)\left( \int_{0}^{s} e^{\alpha \sigma^2/4} |g|\, d\sigma  \right)
   \\
&\leq& \left( \int_{0}^{s} e^{\alpha \sigma^2/4} |g|\, d\sigma  \right)^2.
\end{eqnarray*}
Thus, using (\ref{est-g}), from the above inequality it follows
\bq\label{proof-f-alpha}
\abs{f_\alpha'} e^{\alpha s^2/4}+\frac{c_0}{2}\abs{f_\alpha}\leq
C(c_0)\int_0^s\left(\frac{\sigma}{\beta} +\sigma^2e^{-\alpha
\sigma^2/4}\right)\,d\sigma, \quad \text{ for all } s\geq 0. \eq
In particular, for all $s\geq 0$, \bq\label{est-f-alpha}
\begin{split}
\abs{f_\alpha(s)}&\leq C(c_0)\min\left\{\frac{s^2}{\sqrt{1-\alpha}}+s^3,  \frac{s^2}{\sqrt{\alpha(1-\alpha)}}\right\},\\
\abs{f_\alpha'(s)}&\leq  C(c_0)e^{-\alpha s^2/4}
\min\left\{\frac{s^2}{\sqrt{1-\alpha}}+s^3,
\frac{s^2}{\sqrt{\alpha(1-\alpha)}}\right\},
\end{split}
\eq
where we have used that
\bqq
\int_0^s\sigma^2e^{-\alpha
\sigma^2/4}\,d\sigma\leq  s^2\int_0^s e^{-\alpha
\sigma^2/4}\,d\sigma\leq s^2\sqrt{\pi/\alpha}.
\eqq
Notice that from \eqref{ic-alpha} and \eqref{est-f-alpha},
\bqq
\abs{f_\alpha(s)}\leq
\int_0^s\abs{f_\alpha'}\,d\sigma\leq
\frac{C(c_0)}{\sqrt{\alpha(1-\alpha)}}\int_0^s \sigma^2e^{-\alpha
\sigma^2/4}\,d\sigma,
\eqq
and
\begin{equation}
\label{integral}
\int_{0}^{\infty}\sigma^2 e^{-\alpha \sigma^2/4}\, d\sigma=\frac{2\sqrt{\pi}}{\alpha^{3/2}},
\end{equation}
so that
\bq\label{est-f-alpha2}
\abs{f_\alpha(s)}\leq \frac{C(c_0)}{\alpha^2\sqrt{1-\alpha}}.
\eq
On the other hand, differentiating the relations in \eqref{def-z-alpha} with
respect to $\alpha$,
\bq\label{proof-z-alpha}
\abs{z_\alpha}\leq
2\abs{f_\alpha}\abs{f}, \quad \abs{y_\alpha+ih_\alpha}\leq
\abs{f_\alpha}\abs{f'}+\abs{f}\abs{f'_\alpha}.
\eq
By putting together \eqref{est-f}, \eqref{est-f-alpha}, \eqref{est-f-alpha2}
and \eqref{proof-z-alpha}, we obtain \eqref{z-alpha} and \eqref{h-alpha}.
\end{proof}
\begin{lema}\label{z-cont}
The function $z_{\infty}$ is continuous in $(0,1]$. More
precisely, there exists a constant $C(c_0)$  depending on $c_0$ but
not on $\alpha$, such that
\bq\label{cota-z-cont}
\abs{z_{\infty}(\alpha_2)-z_\infty(\alpha_1)}\leq
\frac{C(c_0)}{L(\alpha_2,\alpha_1)}\abs{\alpha_2-\alpha_1}, \quad
\text{ for all }\alpha_1,\alpha_2 \in (0,1],
\eq
where
$$
L(\alpha_2,\alpha_1):=\alpha_1^2\alpha_2^{3/2}\left( \alpha_1^{3/2}\sqrt{1-\alpha_2}+\alpha_2^{3/2}\sqrt{1-\alpha_1}\right).
$$
In particular,
\bq\label{cota-z-cont2}
\abs{z_{\infty}(1)-z_\infty(\alpha)}\leq C(c_0)\sqrt{1-\alpha},
\quad \text{ for all }\alpha\in [1/2,1].
\eq
\end{lema}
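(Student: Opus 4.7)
The plan is to deduce the continuity of $z_\infty$ from the uniform-in-$s$ $\alpha$-derivative estimate supplied by Lemma~\ref{lemma-z-alpha}, by writing the finite difference $z(s,\alpha_2)-z(s,\alpha_1)$ as an integral in $\alpha$ and then letting $s\to\infty$. Standard smooth-dependence on parameters for the linear equation \eqref{eq-f0} (whose initial conditions are $\alpha$-independent by \eqref{ic1}--\eqref{ic3}) gives $z(s,\cdot)=|f(s,\cdot)|^2 \in C^1((0,1))$ for every $s\ge 0$, so that
\begin{equation*}
z(s,\alpha_2)-z(s,\alpha_1) = \int_{\alpha_1}^{\alpha_2} z_\alpha(s,\alpha)\,d\alpha.
\end{equation*}

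The third bound in \eqref{z-alpha} of Lemma~\ref{lemma-z-alpha} furnishes the $s$-uniform majorant $|z_\alpha(s,\alpha)| \le C(c_0)/(\alpha^2\sqrt{1-\alpha})$, which is integrable on every compact subinterval of $(0,1)$. Taking $s\to\infty$ in the identity above---the left-hand side converging to $z_\infty(\alpha_2)-z_\infty(\alpha_1)$ by Proposition~\ref{cotas}$(ii)$, the right-hand side controlled by a majorant independent of $s$---we obtain
\begin{equation*}
|z_\infty(\alpha_2)-z_\infty(\alpha_1)| \le C(c_0)\int_{\min(\alpha_1,\alpha_2)}^{\max(\alpha_1,\alpha_2)} \frac{d\alpha}{\alpha^2\sqrt{1-\alpha}}.
\end{equation*}

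To reach the stated form, estimate $1/\alpha^2$ by $1/\min(\alpha_1,\alpha_2)^2$ on the integration interval and use the elementary identity $\sqrt{1-\alpha_1}-\sqrt{1-\alpha_2} = (\alpha_2-\alpha_1)/(\sqrt{1-\alpha_1}+\sqrt{1-\alpha_2})$ to compute $\int_{\min}^{\max} d\alpha/\sqrt{1-\alpha}$ explicitly. Bounding the factors $\alpha_j^{3/2}\le 1$ appearing in the definition of $L(\alpha_2,\alpha_1)$ then yields \eqref{cota-z-cont} after separating the two cases $\alpha_1\le\alpha_2$ and $\alpha_1>\alpha_2$. For the particular case \eqref{cota-z-cont2}, setting $\alpha_2=1$ gives $L(1,\alpha)=\alpha^2\sqrt{1-\alpha}$, whence $|1-\alpha|/L(1,\alpha) = \sqrt{1-\alpha}/\alpha^2 \le 4\sqrt{1-\alpha}$ on $[1/2,1]$.

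The main obstacle is the endpoint $\alpha=1$: the uniform majorant $1/(\alpha^2\sqrt{1-\alpha})$ diverges there, so Lemma~\ref{lemma-z-alpha}'s argument cannot be applied directly with $\alpha_2=1$. This is handled by first establishing \eqref{cota-z-cont} on $\alpha_1,\alpha_2\in(0,1)$ and then passing to the limit $\alpha_2\nearrow 1$, using the existence and continuity of $z_\infty(1)$ obtained from the explicit solution formula for the $\alpha=1$ case derived in the Appendix. A secondary (elementary but tedious) point is verifying the algebraic comparison $L(\alpha_2,\alpha_1)\le K\cdot\min(\alpha_1,\alpha_2)^2(\sqrt{1-\alpha_1}+\sqrt{1-\alpha_2})$ in both orderings of $\alpha_1,\alpha_2$, which is handled term by term using $\alpha_j\le 1$.
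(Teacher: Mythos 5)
Your argument on the open interval is sound and genuinely different from the paper's, and in fact a bit more economical: the paper does not use the $s$-uniform bound on $z_\alpha$ at all, but instead integrates the relation \eqref{z} over $s\in(0,\infty)$, writes $y(s,\alpha_2)-y(s,\alpha_1)=\int_{\alpha_1}^{\alpha_2}y_\mu\,d\mu$, invokes the Gaussian-weighted estimate \eqref{h-alpha}, integrates by parts in $\mu$ and computes $\int_0^\infty s^2e^{-\mu s^2/4}ds$ to produce the difference $\frac{\sqrt{1-\alpha_1}}{\alpha_1^{3/2}}-\frac{\sqrt{1-\alpha_2}}{\alpha_2^{3/2}}$, from which \eqref{cota-z-cont} follows by algebra. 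Your route (third bound in \eqref{z-alpha}, integrate in $\alpha$ only, then let $s\to\infty$ in the uniform estimate) gives, for $\alpha_1<\alpha_2<1$, the bound $C(c_0)(\alpha_2-\alpha_1)/\left(\alpha_1^2(\sqrt{1-\alpha_1}+\sqrt{1-\alpha_2})\right)$, which is at least as strong as \eqref{cota-z-cont} because $L(\alpha_2,\alpha_1)\le \alpha_1^2(\sqrt{1-\alpha_1}+\sqrt{1-\alpha_2})$; so on $(0,1)$ your proof works and is simpler.

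The genuine gap is at the endpoint $\alpha_2=1$, which is exactly the case the lemma is needed for (\eqref{cota-z-cont2}, and later \eqref{thm-alpha2}). To pass to the limit $\alpha_2\nearrow 1$ in your inequality for $z_\infty$ you need $z_\infty(\alpha_2)\to z_\infty(1)$, i.e.\ continuity of $z_\infty$ at $\alpha=1$ -- but that is part of what is being proved, and the explicit $\alpha=1$ solution in the Appendix only supplies the value $z_\infty(1)$, not the limit of $z_\infty(\alpha)$ as $\alpha\to1^-$; as written the step is circular. (Also, the obstruction at $\alpha=1$ is not that the majorant $\alpha^{-2}(1-\alpha)^{-1/2}$ "diverges" -- it is integrable up to $1$ -- but that $\alpha$-differentiability is only available on $[0,1)$ since $\beta=\sqrt{1-\alpha^2}$ is not differentiable at $1$.) The fix is easy and avoids circularity: either note that for fixed $s$ the map $\alpha\mapsto z(s,\alpha)$ is continuous on $[\alpha_1,1]$, $C^1$ on $[\alpha_1,1)$, with derivative dominated by the integrable function $C(c_0)\alpha^{-2}(1-\alpha)^{-1/2}$, hence absolutely continuous up to $1$, so the fundamental-theorem identity and your uniform bound hold with $\alpha_2=1$ directly; or first let $\alpha_2\to1^-$ at fixed $s$ (using only continuity of $z(s,\cdot)$ on $[0,1]$) and afterwards let $s\to\infty$ using the existence of $z_\infty(1)$, which is established independently. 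This is essentially how the paper handles the endpoint, since its $\mu$-integrand is likewise integrable up to $\mu=1$. A secondary slip: the comparison $L(\alpha_2,\alpha_1)\le K\min(\alpha_1,\alpha_2)^2(\sqrt{1-\alpha_1}+\sqrt{1-\alpha_2})$ is not true "term by term" with a universal $K$ when $\alpha_2\ll\alpha_1$ (it would force $K\gtrsim \alpha_2^{-1/2}$); the paper's proof simply takes $\alpha_1<\alpha_2$, and you should do the same (the swapped labelling is either a matter of renaming or handled trivially from the boundedness of $z_\infty$), rather than claim the two-sided algebraic comparison.
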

\begin{proof}
Let $\alpha_1,\alpha_2\in(0,1]$, $\alpha_1<\alpha_2$. By classical
results from the ODE theory, the functions $y(s,\alpha)$,
$h(s,\alpha)$ and $z(s,\alpha)$ are smooth in $\R\times [0,1)$ and
continuous in $\R\times [0,1]$ (see e.g.
\cite{coddington,hartman}). Hence,  integrating \eqref{z} with respect to  $s$, we
deduce that
\bq\label{proof-z-cont}
z_\infty(\alpha_2)-z_\infty(\alpha_1)=2\int_0^\infty
(y(s,\alpha_2)-y(s,\alpha_1))\,ds= 2\int_0^\infty
\int_{\alpha_1}^{\alpha_2} \frac{dy}{d\mu}(s,\mu)\,d\mu\,ds.
\eq
To estimate the last integral, we use \eqref{h-alpha}
\bq\label{proof-z-cont2}
\int_{\alpha_1}^{\alpha_2} \abs{\frac{dy}{d\mu}(s,\mu)}\,d\mu \leq
C(c_0)\frac{s^2}{\sqrt{\alpha_1}}\int_{\alpha_1}^{\alpha_2}\frac{e^{-\mu
s^2/4}}{\sqrt{1-\mu}}\,d\mu. \eq
Now, integrating by parts,
\bqq
\int_{\alpha_1}^{\alpha_2}\frac{e^{-\mu
s^2/4}}{\sqrt{1-\mu}}\,d\mu=2\left(
\sqrt{1-\alpha_1}e^{-\alpha_1 s^2/4}-\sqrt{1-\alpha_2}e^{-\alpha_2
s^2/4}\right)
-\frac{s^2}{2}\int_{\alpha_1}^{\alpha_2}\sqrt{1-\mu}e^{-\mu
s^2/4}\,d\mu.
\eqq
Therefore, by combining with \eqref{proof-z-cont} and \eqref{proof-z-cont2},
\bqq
\abs{z_\infty(\alpha_2)-z_\infty(\alpha_1)}\leq
\frac{C(c_0)}{\sqrt{\alpha_1}}\left(\sqrt{1-\alpha_1}\int_0^\infty
s^2e^{-\alpha_1 s^2/4}\,d s-\sqrt{1-\alpha_2}\int_0^\infty
s^2e^{-\alpha_2 s^2/4}\,d s\right),
\eqq
and bearing in mind (\ref{integral}), we conclude that
\bqq
\abs{z_\infty(\alpha_2)-z_\infty(\alpha_1)}\leq
\frac{C(c_0)}{\sqrt{\alpha_1}}\left(\frac{\sqrt{1-\alpha_1}}{\alpha_1^{3/2}}-\frac{\sqrt{1-\alpha_2}}{\alpha_2^{3/2}}\right),
\eqq
which, after some algebraic manipulations and using that $\alpha_1$, $\alpha_2\in(0,1]$, leads to  \eqref{cota-z-cont}.
\end{proof}

The estimate for $z_{\infty}$ near zero is more involved and it is based in an improvement of the estimate for the derivative
of $z_{\infty}$.

\begin{lemma}\label{z-cont-en-0}
The function $z_{\infty}$ is continuous in $[0,1]$. Moreover,
there exists a cons\-tant \mbox{$C(c_0)>0$}, depending on $c_0$ but
not on $\alpha$ such that for all $\alpha\in (0,1/2]$,
\bq\label{ln-alpha}
\abs{z_{\infty}(\alpha)-z_\infty(0)}\leq C(c_0)
\sqrt{\alpha}\abs{\ln(\alpha)}.
\eq
\end{lemma}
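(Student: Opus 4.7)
The plan is to reduce continuity of $z_\infty$ at $\alpha=0$ to a finite-$s$ comparison using the representation from Proposition~\ref{cotas}(iii). For $s\geq s_0=4\sqrt{8+c_0^2}$, that proposition gives
\bqq
z_\infty(\alpha) = z(s,\alpha) + \tfrac{4}{s}\bigl(\alpha\, y(s,\alpha)+\beta\, h(s,\alpha)\bigr) + \tfrac{4\gamma(\alpha)\,e^{-\alpha s^2/2}}{s^2} - R_0(s,\alpha),
\eqq
with $\gamma(\alpha)=2E_0-c_0^2 z_\infty(\alpha)/2$ and $|R_0(s,\alpha)|\leq C(c_0)\,e^{-\alpha s^2/4}/s^3$. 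Writing the same identity at $\alpha=0$ (so $\beta=1$) and subtracting, and observing that $\gamma(\alpha)-\gamma(0) = -\tfrac{c_0^2}{2}\Delta z_\infty$ with $\Delta z_\infty:=z_\infty(\alpha)-z_\infty(0)$, I would gather the $\Delta z_\infty$ contributions on the left-hand side to obtain
\bqq
\Delta z_\infty\Bigl(1+\tfrac{2c_0^2}{s^2}\Bigr) = [z(s,\alpha)-z(s,0)] + \tfrac{4}{s}\bigl[\alpha\, y(s,\alpha)+\beta\, h(s,\alpha)-h(s,0)\bigr] - \tfrac{4\gamma(\alpha)(1-e^{-\alpha s^2/2})}{s^2} + R_0(s,0)-R_0(s,\alpha).
\eqq

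Next I would bound the right-hand side via Lemma~\ref{lemma-z-alpha}: its first estimate gives, uniformly for $\mu\in[0,1/2]$, $|y_\mu(s,\mu)|+|h_\mu(s,\mu)|\leq C(c_0)\,e^{-\mu s^2/4}(s^2+s^3)$, and integrating in $\mu$ over $[0,\alpha]$ together with the elementary inequality $\int_0^\alpha e^{-\mu s^2/4}\,d\mu\leq\min\{\alpha,4/s^2\}$ produces, for $s\geq 1$,
\bqq
|z(s,\alpha)-z(s,0)|,\ |h(s,\alpha)-h(s,0)|\leq C(c_0)\min\{\alpha s^3,\,s\}.
\eqq
Combined with the trivial bounds $|\alpha\, y(s,\alpha)|\leq C\alpha$, $|\beta-1|\leq \alpha^2$, $|\gamma(\alpha)(1-e^{-\alpha s^2/2})|\leq C(c_0)\min\{\alpha s^2,1\}$, and $|R_0|\leq C(c_0)/s^3$, the right-hand side is bounded by $C(c_0)\bigl(\min\{\alpha s^3,s\}+1/s^3+\alpha\bigr)$.

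Choosing $s=\alpha^{-1/6}$, which lies in $[s_0,\,1/\sqrt\alpha]$ for all $\alpha$ smaller than some $\alpha_1(c_0)>0$, makes the two dominant terms balance as $\alpha s^3 = 1/s^3 = \sqrt\alpha$ while the remaining contributions are of order $\alpha^{2/3}$ or $\alpha$; this gives $|\Delta z_\infty|\leq C(c_0)\sqrt\alpha$ in this range. For $\alpha\in[\alpha_1,1/2]$, a Lipschitz-type estimate follows from Lemma~\ref{z-cont}, and combining it with the small-$\alpha$ bound already obtained at $\alpha_1$ yields $|\Delta z_\infty|\leq C(c_0)\sqrt\alpha$ on all of $(0,1/2]$. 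Since $\sqrt\alpha\leq\sqrt\alpha\,|\ln\alpha|$ for $\alpha\leq 1/e$ and the compact range $[1/e,1/2]$ can be absorbed in the constant, this establishes \eqref{ln-alpha} and in particular the continuity of $z_\infty$ at $\alpha=0$. The main obstacle is that the sharper bounds in Lemma~\ref{lemma-z-alpha} involving $1/\sqrt\mu$ and $1/\mu^2$ degenerate as $\mu\to 0^+$, so one is forced to rely on the $\mu$-uniform but $s$-growing estimate $C(s^2+s^3)$; the representation from Proposition~\ref{cotas}(iii) is essential precisely because it allows $s$ to be taken only moderately large (of order $\alpha^{-1/6}$), so that $\alpha s^3$ stays controlled while the tail error $1/s^3$ remains small enough.
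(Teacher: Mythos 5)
Your argument is correct, and it takes a genuinely different route from the paper. The paper fixes $s=1$ in the identity \eqref{z-z-inf}, differentiates it with respect to $\alpha$, and bounds $\abs{dz_\infty/d\alpha}$ by $C(c_0)\abs{\ln\alpha}/\sqrt{\alpha}$ (the logarithm coming from the tail integral $\int_1^\infty e^{-\alpha\sigma^2/2}\,d\sigma/\sigma$ combined with the $s^2/\sqrt{\alpha}$ branch of Lemma~\ref{lemma-z-alpha}), then integrates this derivative bound to get \eqref{ln-alpha}. You instead never differentiate $z_\infty$: you subtract the large-$s$ representation \eqref{z-z-inf-s3} of Proposition~\ref{cotas}\emph{(iii)} at the parameters $\alpha$ and $0$, absorb the $\gamma$-contribution of $\Delta z_\infty$ into the left-hand side, control the finite-$s$ differences by integrating in the parameter the $\alpha$-uniform branch $C(c_0)(s^2+s^3)$ of Lemma~\ref{lemma-z-alpha}, and then optimize the free cutoff $s\sim\alpha^{-1/6}$ so that the growth $\alpha s^3$ balances the tail error $1/s^3$. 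This avoids the log-divergent integral altogether and in fact yields the sharper bound $\abs{z_\infty(\alpha)-z_\infty(0)}\le C(c_0)\sqrt{\alpha}$, which of course implies \eqref{ln-alpha} and is consistent with (indeed a step toward) the improvement conjectured in Section~\ref{section-numerics}; the paper's route, by contrast, works at the fixed point $s=1$ where smoothness in $\alpha$ is immediate and gives a quantitative estimate on $dz_\infty/d\alpha$ itself. Two cosmetic points: for the $z$-difference the bound $\min\{\alpha s^3,s\}$ does not come from the exponential integral you quote (the estimate \eqref{z-alpha} for $z_\alpha$ carries no factor $e^{-\alpha s^2/4}$); it follows from the uniform bound $\abs{z_\mu}\le C(c_0)(s^2+s^3)$ together with the trivial bound $\abs{z}\le 8E_0/c_0^2$ — and only the $\alpha s^3$ branch is used at $s=\alpha^{-1/6}$, so nothing is lost. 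Also, on $[\alpha_1(c_0),1/2]$ the boundedness of $z$ already gives $\abs{\Delta z_\infty}\le C(c_0)\le C(c_0)\sqrt{\alpha}$, so the appeal to Lemma~\ref{z-cont} there, while valid, is not needed; finally, continuity of $z_\infty$ on all of $[0,1]$ follows, as in the paper, by combining your estimate at $\alpha=0$ with Lemma~\ref{z-cont} on $(0,1]$.
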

\begin{proof}
As in the proof of Lemma~\ref{z-cont}, we recall that the
functions $y(s,\alpha)$, $h(s,\alpha)$ and $z(s,\alpha)$ are smooth
in any compact subset of $\R\times [0,1)$. From now on we will use
the identity \eqref{z-z-inf} fixing $s=1$. We can verify that
the two integral terms in \eqref{z-z-inf} are continuous functions at
 $\alpha=0$, which proves that $z_{\infty}$ is
continuous in $0$. In view of Lemma~\ref{z-cont}, we conclude that  $z_{\infty}$ is continuous in $[0,1]$.

Now we claim that
\bq\label{claim}
\left|\frac{dz_{\infty}}{d\alpha}(\alpha)\right|\leq C(c_0)
\frac{\abs{\ln(\alpha)}}{\sqrt{\alpha}}, \quad \text{for all }
\alpha\in (0,1/2].
\eq
In fact, once \eqref{claim} is proved, we can compute
\bqq
\abs{z_{\infty}(\alpha)-z_\infty(0)}=\left|\int_0^{\alpha}
\frac{dz_{\infty}}{d\mu}(\mu)d\mu\right|\leq
C(c_0)\int_{0}^{\alpha}
\frac{\abs{\ln(\mu)}}{\sqrt{\mu}}\,d\mu=2C(c_0)\sqrt{\alpha}(\abs{\ln(\alpha)}+2),
\eqq
which implies \eqref{ln-alpha}.

It remains to prove the claim. Differentiating \eqref{z-z-inf}
(recall that $s=1$) with respect to $\alpha$, and using that
$y(1,\cdot)$, $h(1,\cdot )$ and $z(1,\cdot)$ are continuous
differentiable in $[0,1/2]$, we deduce that there exists a constant
$C(c_0)>0$ such that
\bq\label{der-z}
\left|\frac{dz_{\infty}}{d\alpha}(\alpha)\right|\leq
C(c_0)+8\abs{I_1(\alpha)}+2c_0^2\abs{I_2(\alpha)},
\eq
with
\bq\label{I1}
I_1(\alpha)=\int_1^\infty\frac{z}{\sigma^3}+\alpha
\int_1^\infty\frac{z_{\alpha}}{\sigma^3}+6\int_1^\infty\frac{z_{\alpha}}{\sigma^5}
\eq
and
\bq\label{I2}
I_2(\alpha)=-\frac{\alpha}2 \int_1^\infty
e^{-\alpha \sigma^2/2}z \sigma
+\alpha \int_1^\infty e^{-\alpha \sigma^2/2}\frac{z_\alpha}\sigma
+2\int_1^\infty e^{-\alpha \sigma^2/2} \frac{z_{\alpha}}{\sigma^3}.
\eq

By \eqref{est-y} and \eqref{z-alpha}, $z$ is uniformly bounded and
$z_\alpha$ grows at most as a cubic polynomial, so that the first
and the  last integral in the r.h.s.\@ of \eqref{I1} are bounded
independently of $\alpha\in[0,1/2]$. In addition,   \eqref{z-alpha}
also implies that
\bq
\abs{z_\alpha}=\abs{z_\alpha}^{1/2}\abs{z_\alpha}^{1/2}
\leq C(c_0)(s^3)^{1/2}\left( \frac{1}{\alpha^2}\right)^{1/2}=
C(c_0)\frac{s^{3/2}}{\alpha},
\eq
which shows that the remaining integral in (\ref{I1}) is bounded.

Thus, the above argument shows that
\bq\label{bound-I-1}
\abs{I_1(\alpha)}\leq C(c_0)\qquad {\hbox{for all}}\quad \alpha\in[0,1/2].
\eq
The same arguments also yield that the first two integrals in the r.h.s. of \eqref{I2} are
bounded by $C(c_0)\alpha^{-1/2}.$ Using once more that
$\abs{z_\alpha}\leq C(c_0)s^2\alpha^{-1/2}$, we obtain the following bounds for the remaining two integrals in (\ref{I2})
\bqq
\left|\alpha\int_{1}^{s} e^{-\alpha\sigma^2/2}\frac{z_{\alpha}}{\sigma}\right|\leq \frac{C(c_0)}{\sqrt{\alpha}}\int_{1}^{\infty} \alpha \sigma e^{-\alpha\sigma^2/2}\, d\sigma=
\frac{C(c_0)}{\sqrt{\alpha}}e^{-\alpha/2}\leq\frac{C(c_0)}{\sqrt{\alpha}}
\eqq
and
\bqq
 \left| 2\int_{1}^{\infty} e^{-\alpha\sigma^/2} \frac{z_{\alpha}}{\sigma^3}\, d\sigma   \right|\leq
 \frac{C(c_0)}{\sqrt{\alpha}}\int_{1}^{\infty} \frac{e^{-\alpha\sigma^2/2}}{\sigma}\, d\sigma \leq
 C(c_0)\frac{\abs{\ln(\alpha)}}{\sqrt{\alpha}}.
\eqq
In conclusion, we have proved that
$$\abs{I_2(\alpha)}\leq C(c_0)\frac{\abs{\ln(\alpha)}}{\sqrt{\alpha}},$$
which combined with \eqref{der-z} and \eqref{bound-I-1}, completes
the proof of claim.
\end{proof}

We end this section showing that the previous continuity results allow us to ``pass to the limit'' $\alpha\to 1^-$ in Corollary~\ref{cor-asymp}.
Using the notation $b(\alpha)=b$ and $a(\alpha)=a$ for the constants defined for $\alpha\in[0,1)$ in Proposition~\ref{prop-asymp} in Subsection~\ref{second}, we have

\begin{lemma}\label{lema-cont-a}
The value $b(\alpha)$ is a continuous function of $\alpha\in[0,1]$ and the value
$b(\alpha) e^{ia(\alpha)}$ is continuous function of $\alpha\in[0,1)$
that can be continuously extended  to $[0,1]$.
The function $a(\alpha)$ has a (possible discontinuous) extension for  $\alpha\in[0,1]$ such that $a(\alpha)\in[0,2\pi)$.
\end{lemma}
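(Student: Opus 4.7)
The plan is to treat the two claims of the lemma separately, reducing continuity of $b(\alpha)$ and of $b(\alpha)e^{ia(\alpha)}$ to continuity results already established for $z_\infty$, $y$, $h$, $f$ and $f'$.

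First I would prove continuity of $b$ on $[0,1]$ directly from the defining identity \eqref{b}, namely $b^2(\alpha) = (2E_0-\frac{c_0^2}{4}z_\infty(\alpha))z_\infty(\alpha)$. The initial conditions \eqref{ic1}--\eqref{ic3} make $E_0=E_0(c_0)$ independent of $\alpha$, and $z_\infty$ is continuous on $[0,1]$ by Lemmas~\ref{z-cont} and \ref{z-cont-en-0}; hence $b^2$ is continuous and nonnegative on $[0,1]$, and so is $b=\sqrt{b^2}$.

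For continuity of $be^{ia}$ on $[0,1)$ I would use \eqref{cont}, $be^{ia} = 2C_{\alpha,c_0}(w_1(t_1)+w_\infty)$, and verify continuity of each factor in $\alpha\in[0,1)$. On every compact subinterval of $[0,1)$ the threshold $t_1(\alpha)=s_1(\alpha)^2/4$ stays bounded and depends continuously on $\alpha$, so the explicit integrals defining $C_{\alpha,c_0}$ depend continuously on $\alpha$ by dominated convergence using \eqref{ipp2} and \eqref{cota-K-prima}. Standard continuous dependence on parameters for the linear ODE \eqref{eq-f} gives continuity of $(s,\alpha)\mapsto (f,f')(s,\alpha)$, hence of $y$, $h$, $u$, $v$, and via \eqref{w1-def} of $w_1(t_1(\alpha),\alpha)$. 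Finally, $w_\infty=\int_{t_1}^\infty e^{-\int_{t_1}^\tau \lambda_+}G\,d\tau$ is continuous by dominated convergence, using the bounds on $G$ derived in the proof of Proposition~\ref{prop-asymp}, which are locally uniform in $\alpha\in[0,1)$.

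To extend $be^{ia}$ continuously up to $\alpha=1$ I would bypass the $C_{\alpha,c_0}$ representation (whose $\alpha^2/\beta$ factor is ill-behaved) and instead work with
\begin{equation*}
 i\,b(\alpha)\,e^{-i\tilde a(\alpha)} \;=\; \lim_{s\to\infty} w(s,\alpha)\,e^{(\alpha+i\beta)s^2/4},
\end{equation*}
with $w=y+ih$ and $\tilde a(\alpha)=a(\alpha)+C(\alpha,c_0)$; this is \eqref{w-tilde-a} combined with \eqref{asymp-phi}. Integrating \eqref{der-w} from a fixed large $s$ to infinity, and estimating the oscillating integral $\int_s^\infty e^{(-\alpha+i\beta)\sigma^2/4}(z-z_\infty)\,d\sigma$ by integration by parts together with \eqref{cota}, I can show the $s$-limit is attained uniformly in $\alpha\in[1/2,1]$. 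At $\alpha=1$ the explicit formula for $f$ recalled at the beginning of Subsection~\ref{second} permits a direct computation of the limit, which matches the $\alpha\to 1^-$ limit coming from the uniform estimate. Combined with the continuity in $\alpha\in[0,1]$ of $C(\alpha,c_0)$ (clear from its defining integral), this produces the continuous extension of $b(\alpha)e^{ia(\alpha)}$ to $[0,1]$. The extension of $a(\alpha)$ itself is then defined modulo $2\pi$ as the representative in $[0,2\pi)$ of the argument of this limit (set arbitrarily on the zero set of $b$); this is in general only well defined, not continuous, matching the statement of the lemma.

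The main obstacle is the uniformity of the $s\to\infty$ limit as $\alpha\to 1^-$: the oscillating integral has frequency $\beta/2\to 0$, so naive integration by parts produces factors of $1/\beta$. These must be absorbed either by the extra Gaussian decay $e^{-\alpha\sigma^2/2}$ in $z-z_\infty$ from \eqref{cota} (which dominates any polynomial loss in $\beta$ once $s$ is large enough) or by iterating integration by parts to trade the $1/\beta$ blow-up against higher powers of $1/s$ that are uniformly integrable in $\alpha\in[1/2,1]$.
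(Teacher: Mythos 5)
Your proposal is correct and follows essentially the same route as the paper: continuity of $b$ on $[0,1]$ from \eqref{b} together with the continuity of $z_\infty$ (Lemmas~\ref{z-cont} and \ref{z-cont-en-0}), continuity of $b e^{ia}$ on $[0,1)$ via the representation \eqref{cont} as in Remark~\ref{rem-continuity}, and the extension to $\alpha=1$ by integrating \eqref{der-w} and passing to the limit. Your uniform-in-$\alpha\in[1/2,1]$ convergence argument is just a rephrasing of the paper's dominated-convergence step applied to the fixed-$s_0$ representation \eqref{proof-integral} (for $\alpha\geq 1/2$ the Gaussian decay makes the tail uniformly small, so the $1/\beta$ worry and the explicit computation at $\alpha=1$ are unnecessary), so the two proofs coincide in substance.
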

\begin{proof}
By Lemma~\ref{z-cont-en-0}, we have the continuity of $z_{\infty}$ in [0,1]. Therefore, in view of Remark~\ref{rem-continuity}, the function $be^{ia}$
is a continuous function of $\alpha\in[0,1)$ and by \eqref{b} $b$ is actually well-defined and continuous in $\alpha\in[0,1]$.

%

It only remains to prove that the limit
\bq\label{limite}
L:=\lim_{\alpha \to 1^-}b(\alpha) e^{ia(\alpha)}
\eq
exists. If $b(1)=0$, it is immediate that $L=0$
and we can give any arbitrary value in $[0,2\pi)$ to $a(1)$.
Let us suppose that $b(1)>0$. Integrating \eqref{der-w}, we get
\bqq
w(s)e^{(\alpha+i\beta)s^2/4}=w(s_0)e^{(\alpha+i\beta)s_0^2/4}+
\int_{s_0}^s e^{(-\alpha+i\beta) \sigma^2/4}
\left(\gamma-\frac{c_0^2}{2} (z-z_\infty)\right)d\sigma,
\eqq
and this relation is valid for any $\alpha\in(0,1]$.
Let $\alpha\in(0,1)$. In view of \eqref{w-tilde-a}, letting $s\to\infty$, we have
\bq\label{proof-integral}
ibe^{i(a+C(\alpha,c_0))}=w(s_0)e^{(\alpha+i\beta)s_0^2/4}+
\int_{s_0}^\infty e^{(-\alpha+i\beta) \sigma^2/4}
\left(\gamma-\frac{c_0^2}{2} (z-z_\infty)\right)d\sigma,
\eq
where $C(\alpha,c_0)$ is the constant in \eqref{asymp-phi}.
Notice that the r.h.s. of \eqref{proof-integral} is well-defined for any $\alpha\in(0,1]$
and by the arguments given in the proof of Lemma~\ref{z-cont} and the
dominated convergence theorem, the r.h.s. is also continuous for any $\alpha\in(0,1]$.
Therefore, the limit $L$ in \eqref{limite} exists and  is given by the r.h.s. of \eqref{proof-integral} evaluated in $\alpha=1$
and divided by $ie^{iC(1,c_0)}$. Moreover,
$$\lim_{\alpha \to 1^-}e^{ia(\alpha)}=\frac{L}{b(1)},$$
so that by the compactness of the the unit circle in $\C$, there exists $\theta\in [0,2\pi)$ such that $e^{i\theta}=L/b(1)$
and we can extend $a$ by defining $a(1)=\theta$.
\end{proof}

The following result summarizes an improvement of Corollary~\ref{cor-asymp}
to include the case $\alpha=1$ and the continuous dependence of the constants appearing in the asymptotics on $\alpha$. Precisely, we have the following:
\begin{cor}\label{cor-asymp-bis}
 Let $\alpha\in[0,1]$,  $\beta\geq 0$ with $\alpha^2+\beta^2=1$ and  $c_0>0$. Then,
 \begin{itemize}
 \item[{\it{(i)}}] The asymptotics in Corollary~\ref{cor-asymp} holds true for all $\alpha\in[0,1]$.
 \item[{\it{(ii)}}] Moreover, the values $b$ and $be^{ia}$  are continuous functions of $\alpha\in[0,1]$
 and each term in the asymptotics for $z$, $y$ and $h$ in Corollary~\ref{cor-asymp} depends continuously on $\alpha\in[0,1]$.
 \item[{\it{(iii)}}] In addition, the bounds controlling the error terms depend on $c_0$ and are independent of $\alpha\in [0,1]$.
 \end{itemize}
\end{cor}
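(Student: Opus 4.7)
My plan to prove Corollary~\ref{cor-asymp-bis} is a packaging argument: combine the asymptotic formulas of Corollary~\ref{cor-asymp} (valid on $[0,1)$ with constants independent of $\alpha$) with the continuity results of Subsection~\ref{subsec-dependence}, and then extend everything to $\alpha = 1$ by continuity in the parameter.

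\textbf{Step 1: Continuity of the building blocks.} I would first verify that each quantity appearing in the asymptotic expansions of Corollary~\ref{cor-asymp} depends continuously on $\alpha \in [0,1]$. By Lemma~\ref{z-cont-en-0}, the map $\alpha \mapsto z_\infty(\alpha)$ is continuous on $[0,1]$; consequently $b(\alpha) = \sqrt{(2E_0 - c_0^2 z_\infty(\alpha)/4)\, z_\infty(\alpha)}$ from \eqref{b} and $\gamma(\alpha) = 2E_0 - c_0^2 z_\infty(\alpha)/2$ are continuous on $[0,1]$. Lemma~\ref{lema-cont-a} supplies the continuous extension of $b(\alpha) e^{i a(\alpha)}$ to $\alpha = 1$. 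For fixed $s \geq s_0$, the phase
$$
\phi(s) = a(\alpha) + \beta \int_{s_0^2/4}^{s^2/4} \sqrt{1 + c_0^2\,\frac{e^{-2\alpha t}}{t}}\, dt
$$
is continuous in $\alpha \in [0,1]$ because $\beta = \sqrt{1 - \alpha^2}$ is continuous and the integrand is jointly continuous on the compact rectangle $[0,1] \times [s_0^2/4, s^2/4]$. This proves part (ii).

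\textbf{Step 2: Extending the asymptotics and bounds to $\alpha = 1$.} For every fixed $s \geq s_0$, classical continuous dependence on parameters applied to \eqref{eq-f} (whose initial data depend only on $c_0$) implies that $z(s, \cdot)$, $y(s, \cdot)$, $h(s, \cdot)$ are continuous on $[0,1]$. For $\alpha \in [0, 1/2]$ the error bounds in Corollary~\ref{cor-asymp} follow from Proposition~\ref{prop-asymp}, where $s_1 = s_0$; for $\alpha \in [1/2, 1)$ Lemma~\ref{lema-sin-beta} delivers the same bounds with constants depending only on $c_0$ and $E_0$. Hence for every $s \geq s_0$ and every $\alpha \in [0,1)$,
$$
\bigl|\,z(s, \alpha) - M_z(s, \alpha)\,\bigr| \leq C(c_0)\,\frac{e^{-\alpha s^2/4}}{s^3},
$$
and similarly for $y, h$, where $M_z, M_y, M_h$ are the explicit main terms in Corollary~\ref{cor-asymp}. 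Letting $\alpha \to 1^-$, the left-hand side converges by continuity of the exact solution and of $M_z$ (from Step~1), while the right-hand side is already independent of $\alpha$; this yields the identities and uniform bounds at $\alpha = 1$, proving parts (i) and (iii) simultaneously.

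\textbf{Main obstacle.} The analytic work has in fact been done in the supporting lemmas of Subsection~\ref{subsec-dependence}. The delicate points were the H\"older-type continuity $|z_\infty(1) - z_\infty(\alpha)| \leq C(c_0)\sqrt{1-\alpha}$ near $\alpha = 1$ (Lemma~\ref{z-cont}) and, more subtly, the continuous extension of $b(\alpha) e^{i a(\alpha)}$ through $\alpha = 1^-$ obtained in Lemma~\ref{lema-cont-a} by integrating \eqref{der-w} from $s_0$ to $\infty$ and applying dominated convergence using the exponential decay of $z - z_\infty$. Together with the $\alpha$-uniform remainder estimates of Proposition~\ref{prop-asymp} and Lemma~\ref{lema-sin-beta}, these reduce Corollary~\ref{cor-asymp-bis} to the straightforward limit argument above; no new quantitative estimate is required.
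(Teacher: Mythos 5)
Your proposal is correct and follows essentially the same route as the paper: fix $s\geq s_0$, use classical continuous dependence on the parameter to get continuity of $z(s,\cdot)$, $y(s,\cdot)$, $h(s,\cdot)$ at $\alpha=1$, invoke Lemma~\ref{z-cont-en-0} and Lemma~\ref{lema-cont-a} for the continuity of $z_\infty$, $b$ and $be^{ia}$, and then let $\alpha\to 1^-$ in the $\alpha$-uniform estimates of Proposition~\ref{prop-asymp} and Lemma~\ref{lema-sin-beta}. One small caution: the phase $a(\alpha)$ itself need not be continuous at $\alpha=1$ (Lemma~\ref{lema-cont-a} only gives a possibly discontinuous extension when $b(1)=0$), so the continuity of the oscillatory main terms should be argued through $b\,e^{ia}$ (equivalently $b\sin(\phi)$, $b\cos(\phi)$), exactly as the paper does, rather than through continuity of $\phi$ alone.
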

\begin{proof}
Let $s\geq s_0$ fixed. As noticed in the proof of Lemma~\ref{z-cont}, the functions $y(s,\alpha)$, $h(s,\alpha)$, $z(s,\alpha)$ are continuous in $\alpha=1$.
In addition, by Lemma~\ref{lema-cont-a} $be^{ia}$ is continuous in $\alpha=1$, using the definition of $\phi$, it is immediate that $b\sin(\phi(s))$  and
$b\cos(\phi(s))$ are continuous in $\alpha=1$. Therefore the big-$O$ terms in \eqref{asym-y}, \eqref{asym-h} and \eqref{asym-z} are also
 are continuous in $\alpha=1$. The proof of the corollary follows by letting $\alpha\rightarrow 1^{-}$  in \eqref{asym-y}, \eqref{asym-h}
 and \eqref{asym-z}.
\end{proof}
\subsubsection{Dependence on $c_0$}
In this subsection, we study the dependence of $z_\infty$ as a function of $c_0$, for a fixed value of $\alpha$.
To this aim, we need to take into account the initial conditions given in \eqref{ic1}--\eqref{ic3}. More generally,
let us assume that $f$ is a solution of \eqref{eq-f} with initial conditions $f(0)$ and $f'(0)$ that depend smoothly on $c_0$,
for any $c_0>0$, and that $E_0>0$ is the associated energy defined in \eqref{energy-1}. To keep our notation simple, we omit the
parameter $c_0$ in the functions $f$ and $z_\infty$. Under these assumptions, we have

\begin{prop}\label{prop-c-0}
Let $\alpha\in[0,1]$ and $c_0>0$. Then $z_\infty$ is a continuous function of $c_0\in (0,\infty)$. Moreover if $\alpha\in(0,1]$, the following estimate hold
\bq\label{asymp-z-c}
\left|z_\infty-\left|f(0)+\frac{f'(0)\sqrt{\pi}}{\sqrt{\alpha+i\beta}}\right|^2\right|\leq
\frac{\sqrt{2 E_0}
c_0\pi}{\alpha}\left|f(0)+\frac{f'(0)\sqrt{\pi}}{\sqrt{\alpha+i\beta}}\right|
+\left(\frac{\sqrt{2 E_0} c_0\pi}{2\alpha}\right)^2.
\eq
\end{prop}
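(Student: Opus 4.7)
The strategy is a Duhamel/perturbation argument in which the term $\tfrac{c_0^2}{4}e^{-\alpha s^2/2}f$ appearing in \eqref{eq-f} is treated as a forcing relative to the simpler equation
\[
  f_0''(s) + \tfrac{s}{2}(\alpha + i\beta)\, f_0'(s) = 0,\qquad f_0(0)=f(0),\ f_0'(0)=f'(0).
\]
This auxiliary equation integrates explicitly: $f_0'(s) = f'(0)\, e^{-(\alpha+i\beta)s^2/4}$, and hence
\[
  \lim_{s\to\infty} f_0(s) \;=\; f(0) + \frac{f'(0)\sqrt{\pi}}{\sqrt{\alpha+i\beta}},
\]
which is precisely the quantity inside the absolute value on the left-hand side of \eqref{asymp-z-c}.

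Setting $v=f-f_0$, we have $v(0)=v'(0)=0$ and $v''+\tfrac{s}{2}(\alpha+i\beta)v'=-\tfrac{c_0^2}{4}e^{-\alpha s^2/2}f$. I would multiply by the integrating factor $e^{(\alpha+i\beta)s^2/4}$, integrate twice on $[0,s]$, and apply Fubini to obtain
\[
  v(s)\;=\;-\frac{c_0^2}{4}\int_0^s f(\sigma)\,e^{(-\alpha+i\beta)\sigma^2/4}\int_\sigma^s e^{-(\alpha+i\beta)\tau^2/4}\,d\tau\,d\sigma.
\]
For $\alpha>0$ the Gaussian weights are integrable, so $v(\infty):=\lim_{s\to\infty}v(s)$ exists, and in view of Remark~\ref{rem-asymp} the limit $f(\infty)=f_0(\infty)+v(\infty)$ also exists and satisfies $z_\infty=|f(\infty)|^2$. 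Using the a priori bound $|f(\sigma)|\le \sqrt{8E_0}/c_0$ from Proposition~\ref{cotas}(i), the identities $|e^{(\pm\alpha+i\beta)s^2/4}|=e^{\mp\alpha s^2/4}$, and the elementary estimate $\int_0^\infty e^{-\alpha u^2/4}\,du=\sqrt{\pi/\alpha}$, I would bound the above double integral by the product of two separate Gaussians, obtaining
\[
  |v(\infty)|\;\le\;\frac{c_0^2}{4}\cdot\frac{\sqrt{8E_0}}{c_0}\left(\int_0^\infty e^{-\alpha u^2/4}\,du\right)^{\!2}\;=\;\frac{\sqrt{2E_0}\,c_0\,\pi}{2\alpha}.
\]
The announced estimate \eqref{asymp-z-c} is then immediate from the elementary inequality $\big|\,|f_0(\infty)+v(\infty)|^2-|f_0(\infty)|^2\,\big|\le 2|f_0(\infty)||v(\infty)|+|v(\infty)|^2$.

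For the continuity of $z_\infty$ as a function of $c_0\in(0,\infty)$ when $\alpha\in(0,1]$, I would exploit the integral representation of $v(\infty)$ above: its integrand is dominated uniformly on compact $c_0$-sets by an $L^1$ Gaussian, so classical continuous dependence of $f(s;c_0)$ on $c_0$ combined with dominated convergence yields that $v(\infty)$ depends continuously on $c_0$, and so does $f_0(\infty)$ trivially, hence also $z_\infty=|f(\infty)|^2$. For the remaining case $\alpha=0$, I would instead evaluate identity \eqref{z-z-inf} at $s=1$ and observe that each term on the right-hand side depends continuously on $c_0$ (the integrals being absolutely convergent with integrands uniformly bounded on compact $c_0$-sets), exactly as in the proof of Lemma~\ref{z-cont-en-0}. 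The only delicate point is ensuring uniformity of these bounds across the relevant ranges, which is transparent since $E_0$ is a smooth function of $c_0$ and the a priori estimates from Proposition~\ref{cotas} depend continuously on both parameters.
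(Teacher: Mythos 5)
Your proposal is correct and is essentially the paper's own argument: writing $f$ as $f(0)+G(s)+F(s)$ via the integrating factor $e^{(\alpha+i\beta)s^2/4}$ and two integrations is exactly your $f_0+v$ decomposition (your Fubini step merely swaps the order of the double integral), and the bound $|F|\le \sqrt{2E_0}\,c_0\pi/(2\alpha)$ together with the elementary inequality for the difference of squared moduli gives \eqref{asymp-z-c} just as in the paper. The only cosmetic deviation is in the continuity part, where the paper invokes \eqref{z-z-inf} at $s=1$ (plus smooth dependence on $c_0$ and dominated convergence) uniformly for all $\alpha\in[0,1]$, while you use the Duhamel representation for $\alpha>0$ and reserve that identity for $\alpha=0$; both routes are fine, and your citation of Remark~\ref{rem-asymp} is unnecessary since the existence of $\lim_{s\to\infty}f(s)$ already follows from your representation.
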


\begin{proof}
Since we are assuming that the initial conditions $f(0)$ and $f'(0)$ depend smoothly on $c_0$,
by classical results from the ODE theory, the functions $f$, $y$, $h$ and $z$ are smooth with respect to $s$ and $c_0$.
From \eqref{z-z-inf} with $s=1$, we have that $z_\infty$ can be written in terms of continuous functions of $c_0$
(the continuity of the integral terms follows from the dominated convergence theorem), so that $z_\infty$ depends continuously
on $c_0$.

To prove \eqref{asymp-z-c}, we multiply \eqref{eq-f} by $e^{(\alpha+i\beta)s^2/4}$, so that
\bqq
(f'e^{(\alpha+i\beta)s^2/4})'=-\frac{c_0^2}{4}f(s) e^{(-\alpha+i\beta) s^2/4}.
\eqq
Hence,  integrating twice, we have
\bq
\label{int-f}
f(s)=f(0)+G(s)+F(s),
\eq
with
\bqq
 G(s)=f'(0)\int_0^se^{-(\alpha+i\beta)\sigma^2/4}\,d\sigma \ \text{ and } \ F(s)=-\frac{c_0^2}4\int_0^se^{-(\alpha+i\beta)\sigma^2/4}\int_0^\sigma e^{(-\alpha+i\beta)\tau^2/4}f(\tau)\,d\tau\, d\sigma.
 \eqq
Since by Proposition~\ref{cotas} $\abs{f(s)}\leq
\frac{2\sqrt{2E_0}}{c_0}$, we obtain
\bq\label{cota-F}
\abs{F(s)}\leq \frac{\sqrt{2E_0}c_0}{2} \int_0^se^{-\alpha\sigma^2/4}\int_0^\sigma e^{-\alpha\tau^2/4}\,d\tau\, d\sigma\leq \frac{\sqrt{2E_0}c_0}{2}\cdot \frac{\pi}{\alpha}.
\eq
Using \eqref{int-f} and the identity,
 \bqq
\abs{z_1+z_2}^2=\abs{z_1}^2+2\Re(\bar z_1 z_2)+\abs{z_2}^2, \quad z_1,z_2\in\C,
 \eqq
we conclude that $z(s)=\abs{f(s)}^2$ satisfies
\bqq
z(s)=\abs{f(0)+G(s)}^2+2\Re(\bar F(s)(f(0)+G(s)))+\abs{F(s)}^2.
\eqq
Therefore, for all $s\geq 0$,
\bqq
\abs{z(s)-\abs{f(0)+G(s)}^2}\leq 2\abs{F(s)}\abs{f(0)+G(s)}+\abs{F(s)}^2.
\eqq
Hence we can use the bound \eqref{cota-F} and then let $s\to\infty$.
Noticing that
\bqq
\lim_{s\to \infty}G(s)=f'(0)\int_0^\infty
e^{-(\alpha+i\beta)\sigma^2/4}\,d\sigma=f'(0)\frac{\sqrt{\pi}}{\sqrt{\alpha+i\beta}},
\eqq
the estimate \eqref{asymp-z-c} follows.
\end{proof}

\section{Proof of the main results}\label{sec-proof-results}
In Section~\ref{sec-reduction} we have performed a careful analysis of the equation \eqref{eq-f0}, taking also into consideration the initial
conditions \eqref{ic1}--\eqref{ic3}. Therefore, the proofs of our main theorem consist mainly in coming back to the original variables
using the identities \eqref{m-1} and \eqref{m-2}. For the sake of completeness, we provide the details in the following proofs.
\begin{proof}[{\textbf{Proof of Theorem~\ref{thm-conver}}}]

Let $\alpha\in[0,1]$, $c_0>0$ and $\{\m_{c_0,\alpha}(\cdot), \n_{c_0,\alpha}(\cdot), \b_{c_0,\alpha}(\cdot)\}$ be
the unique $\mathcal{C}^{\infty}(\R; \mathbb{S}^2)$-solution of the Serret--Frenet equations \eqref{serret} with curvature and torsion \eqref{c-tau-1} and initial conditions \eqref{IC-1}. In order to simplify the notation, in the rest of the proof we drop the subindexes $c_0$ and $\alpha$ and simply write $\{\m(\cdot), \n(\cdot), \b(\cdot)\}$ for $\{\m_{c_0,\alpha}(\cdot), \n_{c_0,\alpha}(\cdot), \b_{c_0,\alpha}(\cdot)\}$.

First observe that if we define
$\{\vec{M}, \vec{N}, \vec{B}\}$ in terms of $\{ \m, \n, \b\}$ by
\begin{align*}
 \vec{M}(s)&=(m(-s), -m(-s),- m(-s)),    \\
 \vec{N}(s)&=(-n(-s), n(-s), n(-s)),    \\
 \vec{B}(s)&=(-b(-s), b(-s), b(-s)),\qquad s\in \R,
\end{align*}
then $\{\vec{M}, \vec{N}, \vec{B}\}$ is also a
solution of the Serret system (\ref{serret}) with curvature and
torsion (\ref{c-tau-1}). Notice also that
$$
 \{\vec{M}(0), \vec{N}(0), \vec{B}(0)\}=
 \{\vec{m}(0), \vec{n}(0), \vec{b}(0)\}.
$$
Therefore, from the uniqueness of the solution we conclude that
$$
 \vec{M}(s)= \m(s),\qquad  \vec{N}(s)=\n(s) \qquad {\hbox{and}}\qquad  \vec{B}(s)=\b(s), \qquad \forall \, s\in\R.
$$
This proves part \ref{symmetries} of Theorem~\ref{thm-conver}.

Second, in Section~\ref{sec-reduction} we have seen that one can write the components of the Frenet trihedron $\{\m, \n, \b\}$ as
\begin{align}
m_1(s)&=2\abs{f_1(s)}^2-1, \qquad
n_1(s)+ib_1(s)=\frac{4}{c_0}e^{\alpha s^2/4}\bar f_1(s) f'_1(s),
 \label{mnb-1}\\
m_j(s)&=\abs{f_j(s)}^2-1,  \qquad
n_j(s)+ib_j(s)=\frac{2}{c_0}e^{\alpha s^2/4}\bar f_j(s) f'_j(s), \quad j\in\{2,3\},
 \label{mnb-2}
\end{align}
with $f_j$ solution of the second order ODE \eqref{eq-f0} with initial conditions \eqref{ic1}-\eqref{ic3} respectively, and associated initial energies (see \eqref{energy-i})
\begin{equation}\label{energies}
 E_{0,1}=\frac{c_0^2}{8} \qquad {\hbox{and}}\qquad
 E_{j,1}=\frac{c_0^2}{8}, \qquad {\hbox{for}} \qquad j\in\{2,3\}.
\end{equation}
Notice that the identities \eqref{mnb-1}--\eqref{mnb-2} rewrite equivalently as

\begin{equation}\label{m-j}
\left\lbrace
\begin{aligned}
&m_{1,c_0,\alpha}=2z_1-1, &&n_{1,c_0,\alpha}=\frac{4}{c_0}e^{\alpha s^2/4}\, y_1,
\quad b_{1,c_0,\alpha}=\frac{4}{c_0}e^{\alpha s^2/4}\,h_1,
   \\
&m_{j,c_0,\alpha}=z_j-1, &&n_{j,c_0,\alpha}=\frac{2}{c_0}e^{\alpha s^2/4}\,y_j,
\quad b_{j,c_0,\alpha}=\frac{2}{c_0}e^{\alpha s^2/4}\,h_j, \quad j\in\{2,3\},
\end{aligned}\right.
\end{equation}
in terms of the quantities $\{z_j, y_j, h_j\}$ defined by
$$
z_j=|f_j|^2, \quad y_j=\Re(\bar f_j f_j') \quad {\hbox{and}}\quad h_j=\Im(\bar f_j f_j').
$$
Denote by  $z_{j,\infty}$, $a_j$, $b_j$, $\gamma_j$ and $\phi_j$ the constants and function appearing in the asymptotics of $\{y_j,h_j,z_j\}$ proved in Section~\ref{sec-reduction} in Corollary~\ref{cor-asymp-bis}.

Taking the limit as $s\rightarrow +\infty$ in \eqref{mnb-1}--\eqref{mnb-2}, and since $\abs{\m(s)}=1$, we obtain that there exists
 $\vec A^{+}=(A^{+}_{j})_{j=1}^{3}\in \mathbb{S}^2$ with
\begin{equation}
 \label{A+}
 A_{1}^{+}=2z_{1,\infty}-1, \qquad
 A_{j}^{+}=z_{j,\infty}-1, \qquad {\hbox{for }} j\in\{2,3\}.
\end{equation}
The asymptotics stated in part \ref{asymp} of Theorem~\ref{thm-conver} easily follows from formulae \eqref{mnb-1}--\eqref{mnb-2}
and the asymptotics for $\{z_j, y_j, h_j  \}$ established in Corollary~\ref{cor-asymp-bis}.
%
%
Indeed, it suffices to observe that from the formulae for $b_j$ and $\gamma_j$ in terms of the initial energies $E_{0,j}$ and $z_{j,\infty}$ given in Corollary~\ref{cor-asymp-bis}, \eqref{energies} and \eqref{A+} we obtain
\begin{gather}
b_1^2=\frac{c_0^2}{16}(1-(A_1^+)^2), \quad  b_2^2=\frac{c_0^2}{4}(1-(A_2^+)^2), \quad  b_3^2=\frac{c_0^2}{4}(1-(A_3^+)^2),\label{b-i}\\
\gamma_1=-\frac{c_0^2}{4}A_1^+, \quad  \gamma_2=-\frac{c_0^2}{2}A_2^+,
\quad  \gamma_3=-\frac{c_0^2}{2}A_3^+.\label{gamma-i}
\end{gather}
Substituting these constants in \eqref{asym-y}, \eqref{asym-h} and
\eqref{asym-z} in Corollary~\ref{cor-asymp-bis}, we obtain \eqref{asym-m}, \eqref{asym-n} and
\eqref{asym-b}. This completes the proof of Theorem~\ref{thm-conver}-\ref{asymp}.
\end{proof}
\begin{proof}[{\textbf{Proof of Theorem~\ref{Theorem1}}}]

Let $\alpha\in[0,1]$, and $c_0>0$. As before, dropping the subindexes, we will denote by $\{\m, \n,\b\}$ the unique solution of the Serret--Frenet equations \eqref{serret} with curvature and torsion \eqref{c-tau-1} and initial conditions \eqref{IC-1}. Define
\begin{equation}
 \label{1-1}
  \mm(s,t)=\m\left(\frac{s}{\sqrt{t}}   \right).
\end{equation}
As has been already mentioned (see Section~\ref{sec-self-similar}), part \ref{regular} of Theorem~\ref{Theorem1} follows from the fact that the
 triplet $\{\m,\n,\b\}$ is a regular-$(\mathcal{C}^{\infty}(\R; \mathbb{S}^2))^3$ solution of \eqref{serret}-\eqref{c-tau-1}-\eqref{IC-1} and satisfies the equation
$$
-\frac{s}{2} c\n= \beta(c'\b-c\tau\n) +\alpha(c\tau\b+c'\n).
$$
Next, from the parity of the components of the profile $\m(\cdot)$ and the asymptotics established in parts \ref{symmetries} and \ref{asymp}
 in Theorem~\ref{thm-conver}, it is immediate to prove
  the pointwise convergence \eqref{convergencia}. In addition, $\vec{A}^{-}=(A^+_1, -A^+_2, -A^+_3)$ in terms of the components of the vector $\vec{A}^{+}=(A^+_j)_{j=1}^{3}$.

Now, using the symmetries of $\m(\cdot)$, the change of variables $\eta=s/\sqrt{t}$ gives us
\bq\label{dem-est-int}
\|\mm(\cdot,t)- \vec A^+\chi_{(0,\infty)}(\cdot)-\vec A^{-}\chi_{(-\infty,0)}(\cdot)\|_{L^p(\R)}=
\sum_{j=1}^3\left(2t^{1/2} \int_0^\infty \abs{m_j(\eta)- A_j^+}^p\,d\eta\right)^{1/p}.
\eq
Therefore, it only remains to prove that the last integral is finite. To this end, let $s_0=4\sqrt{8+c_0^2}$.
On the one hand, notice that since $\m$ and $\vec A^+$ are unitary vectors,
\bq\label{dem-est-int2}
 \int_0^{s_0} \abs{m_j(s)- A_j}^p\,ds\leq 2^ps_0.
\eq
On the other hand, from the asymptotics for $\m(\cdot)$ in  \eqref{asym-m}, \eqref{cota-rmk}, and the fact that the
 vectors $\vec A^+$ and $\vec B^+$ satisfy $\abs{\vec{A}^{+}}^2=1$ and $\abs{\vec{B^+}}^2=2$, we obtain
\begin{align}\nonumber
\left(\int_{s_0}^\infty \abs{m_j(s)-A_j^+}^p\,ds\right)^{1/p}\leq &
 2\sqrt{2}c_0(\alpha+\beta)\left(\int_{s_0}^{\infty}\frac{e^{-\alpha s^2p/4}}{s^p}\right)^{1/p}+ 2c_0^2
\left(\int_{s_0}^{\infty}\frac{e^{-\alpha s^2p/2}}{s^{2p}}\right)^{1/p}\\
& +
C(c_0)\left(\int_{s_0}^{\infty}\frac{e^{-\alpha s^2p/4}}{s^{3p}}\right)^{1/p}.
\label{est-int}
\end{align}
 Since the r.h.s.\@ of \eqref{est-int} is finite for all $p\in (1,\infty)$ if $\alpha \in [0,1]$, and for all $p\in [1,\infty)$ if $\alpha\in(0,1]$,  inequality \eqref{cota-m} follows
from \eqref{dem-est-int}, \eqref{dem-est-int2} and \eqref{est-int}. This completes the proof of Theorem~\ref{Theorem1}.
\end{proof}
\begin{proof}[\textbf{Proof of Theorem~\ref{thm-c-0}}]
The proof is a consequence of Proposition~\ref{prop-c-0}. In fact, recall
the relations \eqref{A+} and \eqref{energy-i}, that is
$$
 A_{1}^{+}=2z_{1,\infty}-1, \qquad {\hbox{and}}\qquad
 A_{j}^{+}=z_{j,\infty}-1, \qquad {\hbox{for }} j\in\{2,3\},
$$
and
$$
   E_{0,1}=\frac{c_0^2}{8}, \qquad E_{0,j}=\frac{c_0^2}{4} , \qquad {\hbox{for }} j\in\{2,3\},
$$
Thus the continuity of $\vec A^+_{c_0,\alpha}$ with respect to $c_0$, follows from the continuity of $z_\infty$ in Proposition~\ref{prop-c-0}.

Using the initial conditions \eqref{ic1}--\eqref{ic3}, the values for the energies $E_{0,j}$ for $j\in\{1,2,3\}$,  and the identity
\bqq
\frac{\sqrt{\pi}}{\sqrt{\alpha+i\beta}}=\frac{\sqrt{\pi}}{\sqrt2}\left(\sqrt{1+\alpha}-i\sqrt{1-\alpha}\right),
\eqq
we now compute
\begin{equation}
\label{values}
\left|f_j(0)+\frac{f'_j(0)\sqrt{\pi}}{\sqrt{\alpha+i\beta}}\right|^2=\begin{cases}
1,  & \text{ if }j=1,\\
1+\frac{c_0^2\pi}{4}+\frac{c_0\sqrt{\pi}}{\sqrt2}\sqrt{1+\alpha}, & \text{ if }j=2,\\
1+\frac{c_0^2\pi}{4}+\frac{c_0\sqrt{\pi}}{\sqrt2}\sqrt{1-\alpha}, & \text{ if }j=3.\\
\end{cases}
\end{equation}
Then, substituting the values \eqref{values} in \eqref{asymp-z-c} and using the above relations together with the
inequality $\sqrt{1+x}\leq 1+x/2$ for $x\geq 0$, we obtain the estimates \eqref{A1}--\eqref{A3}.
\end{proof}
\begin{proof}[\textbf{Proof of Theorem~\ref{thm-alpha1-2}}] Recall that the components of
$\vec A^+_{c_0,\alpha}$ are given explicitly in \eqref{A+} in terms of the functions $z_{j,\infty}$, for $j\in\{1,2,3\}$.
The continuity on $[0,1]$ of $A^{+}_{j,c_0, \alpha}$ as a function of $\alpha$ for $j\in\{ 1,2,3 \}$ follows from that of
 $z_{j,\infty}$ established in Lemma~\ref{z-cont-en-0}. Notice also that the estimates
\eqref{thm-alpha} and \eqref{thm-alpha2}  are an immediate consequence of  \eqref{ln-alpha} in Lemma~\ref{z-cont-en-0} and \eqref{cota-z-cont2} in Lemma~\ref{z-cont}, respectively.
\end{proof}
Before giving the proof of Proposition~\ref{jump}, we recall that  when $\alpha=0$ or $\alpha=1$, the vector
$\vec{A}^+_{c_0,\alpha}=(A_{j,c_0,\alpha})_{j=1}^{3}$ is determined
explicitly in terms of the parameter $c_0$ (see
\cite{vega-gutierrez} for the case $\alpha=0$ and Appendix for
the case $\alpha=1$). Precisely,
\begin{align}
A_{1,c_0,0}&=e^{-\frac{\pi c_0^2}2},\label{A1-alpha-0}\\
A_{2,c_0,0}&= 1-\frac{e^{-\frac{\pi c_0^2}{4}}}{8\pi}\sinh(\pi c_0^2/2)\abs{c_0\Gamma(ic_0^2/4)
+2e^{i\pi/4}\Gamma(1/2+ic_0^2/4)}^2,\label{A2-alpha-0}\\
A_{3,c_0,0}&= 1-\frac{e^{-\frac{\pi c_0^2}{4}}}{8\pi}\sinh(\pi c_0^2/2)\abs{c_0\Gamma(ic_0^2/4)
              -2e^{-i\pi/4}\Gamma(1/2+ic_0^2/4)}^2 \label{A3-alpha-0}
\end{align}
and
\begin{equation}\label{A-alpha-1}
 \vec{A}^+_{c_0,1}=(\cos(c_0\sqrt\pi), \sin(c_0\sqrt\pi),0 ).
\end{equation}
%
\begin{proof}[\textbf{Proof of Proposition~\ref{jump}}] Recall that  (see~Theorem~\ref{Theorem1})
\begin{equation}\label{A-bis}
 \vec{A}^{-}_{c_0,\alpha}=(A^{+}_{1,c_0,\alpha}, -A^{+}_{2,c_0,\alpha}, -A^{+}_{3,c_0,\alpha}),
\end{equation}
with $A^{+}_{j,c_0,\alpha}$ the components of $\vec{A}^{+}_{c_0,\alpha}$. Therefore $\vec{A}^{+}_{c_0,\alpha}\neq \vec{A}^{-}_{c_0,\alpha}$ iff $A^{+}_{1,c_0,\alpha}\neq 1$ or $-1$.

Parts {\it{(ii)}} and {\it{(iii)}} follow from the continuity of ${A}^{+}_{1, c_0,\alpha}$ in $[0,1]$ established
  in Theorem~\ref{thm-alpha1-2} bearing in mind that, from the expressions for ${A}^{+}_{1, c_0,0}$ in
\eqref{A1-alpha-0} and  ${A}^{+}_{1, c_0,1}$   in \eqref{A-alpha-1}, we have that ${A}^{+}_{1, c_0,0}\neq \pm 1$ for all $c_0>0$ and
 ${A}^{+}_{1, c_0,1}\neq \pm 1$ if $c_0\neq k\sqrt{\pi}$ with $k\in\mathbb{N}$.

In order to proof part {\it{(i)}}, we will argue by contradiction. Assume that for some  $\alpha\in (0,1)$,
 there exists a sequence $\{c_{0,n}\}_{n\in\N}$ such that $c_{0,n}>0$, $c_{0,n}\longrightarrow 0$ as $n\rightarrow \infty$ and $\vec{A}^+_{c_{0,n},\alpha}=\vec{A}^-_{c_{0,n}\alpha}$. Hence from \eqref{A-bis} the second and third component of $\vec{A}^+_{c_{0,n},\alpha}$ are zero. Thus the estimate \eqref{A2} in Theorem~\ref{thm-c-0}
yields
$$
c_{0,n}\frac{\sqrt{\pi(1+\alpha)}}{\sqrt2}
\leq \frac{c_{0,n}^2\pi}{4}
  +\frac{c_{0,n}^2\pi}{\alpha\sqrt{2}}\left(1+\frac{c_{0,n}^2 \pi}{8}+c_{0,n}\frac{\sqrt{\pi(1+\alpha)}}{2\sqrt2}\right)
  +\left(\frac{c_{0,n}^2\pi}{2\sqrt{2}\alpha}\right)^2.
$$
Dividing by $c_{0,n}>0$ and letting $c_{0,n}\to 0$ as $n\rightarrow\infty$, the contradiction follows.

\end{proof}
%
%
%
\section{Some numerical results}\label{section-numerics}
%

As has been already pointed out, only in the cases $\alpha=0$ and $\alpha=1$ we have an explicit formula for $\vec A_{c_0,\alpha}^+$ (see \eqref{A1-alpha-0}--\eqref{A-alpha-1}).
Theorems~\ref{thm-c-0} and~\ref{thm-alpha1-2} give information about the behaviour of $\vec A_{c_0,\alpha}^{+}$ for small values of $c_0$ for a fixed valued of $\alpha$, and  for values of $\alpha$ near to 0 or 1 for a fixed valued of $c_0$. The aim of this section is to give some numerical results
that allow us to understand the map $(\alpha,c_0) \in[0,1]\times (0,\infty)\mapsto \vec A_{c_0,\alpha}^{\pm} \in \S^2$.
For a fixed value of $\alpha$, we will discuss first the injectivity and surjectivity (in some appropriate sense) of the map $c_0\mapsto  \vec{A}_{c_0,\alpha}^{\pm}$
and second the behaviour of $ \vec{A}_{c_0,\alpha}^+$ as $c_0\to\infty$.


For fixed $\alpha$, define $\theta_{c_0,\alpha}$ to be the angle between the unit vectors $\vec{A}^{+}_{c_0,\alpha}$ and $-\vec{A}^{-}_{c_0,\alpha}$ associated to the family of solutions  $\mm_{c_0,\alpha}(s,t)$ established in Theorem~\ref{Theorem1}, that is $\theta_{c_0,\alpha}$ such that
\begin{equation}\label{theta}
 \cos(\theta_{c_0,\alpha})=1-2(A^+_{1,c_0,\alpha})^2.
\end{equation}
It is pertinent to ask whether $\theta_{c_0,\alpha}$ may attain any value in the interval $[0,\pi]$ by varying the parameter $c_0>0$.

In Figure~\ref{fig-angulo} we plot the function $\theta_{c_0,\alpha}$ associated to the family of solutions $\mm_{c_0,\alpha}(s,t)$ established in Theorem~\ref{Theorem1} for $\alpha=0$, $\alpha=0.4$ and $\alpha=1$, as a function of $c_0>0$. The curves $\theta_{c_0,0}$ and $\theta_{c_0,1}$ are exact since we have
explicit formulae for $A_{1,c_0,\alpha}^{+}$ when $\alpha=0$ and $\alpha=1$ (see \eqref{A1-alpha-0} and \eqref{A-alpha-1}).
 We deduce that in the case $\alpha=0$, there is a bijective relation between $c_0>0$ and the angles in $(0,\pi)$.
In the case $\alpha=1$, there are infinite values of $c_0>0$ that allow to reach any angle in $[0,\pi]$. If $\alpha\in(0,1)$, numerical simulations show that there exists $\theta^*_{\alpha}\in (0,\pi)$ such that the angles in $(\theta^{*}_{\alpha},\pi)$ are reached by a unique value of $c_0$, but for angles in $[0,\theta^{*}_{\alpha}]$
there are at least two values of $c_0>0$ that produce them (See $\theta_{c_0,0.4}$ in Figure~\ref{fig-angulo}).

\begin{figure}[ht]
\begin{center}
{
\begin{overpic}[trim=0 0 130 0,clip,scale=0.7%
]{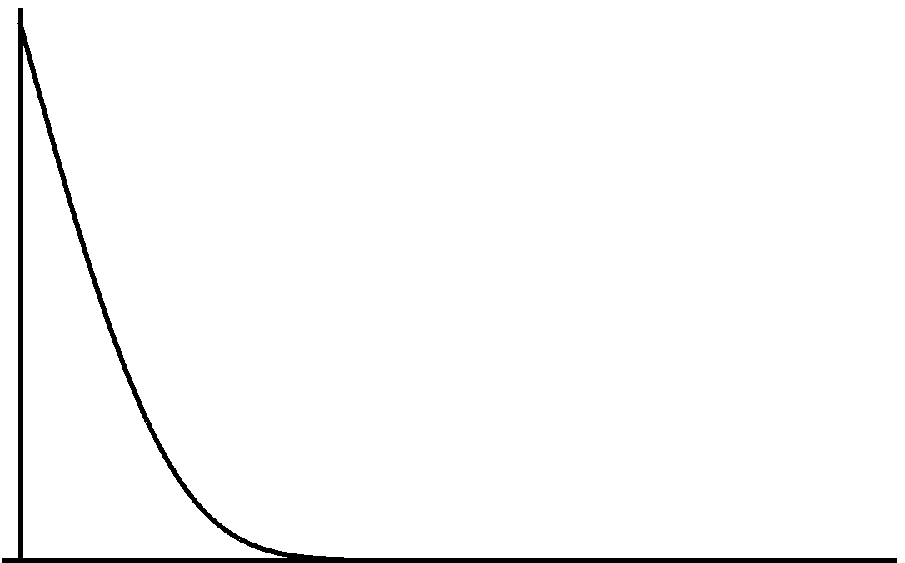}
\put(6,102){\small{$\theta_{c_0,0}$}}
\put(-3,92){\small{$\pi$}}
\put(35,-6){\small{$c_0$}}
\end{overpic}
}
\hspace{-0.3cm}
{
\begin{overpic}[trim=0 0 20 0,clip,scale=0.7]{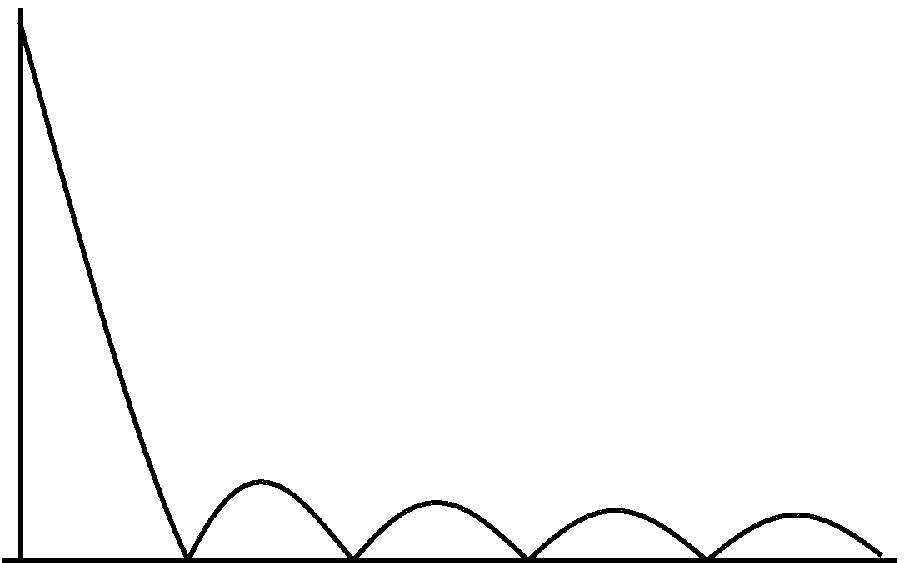}
\put(4,68){\small{$\theta_{c_0,0.4}$}}
\put(-2,62){\small{$\pi$}}
\put(45,-5){\small{$c_0$}}
\end{overpic}
}\hspace*{-0.1cm}
{
\begin{overpic}[trim=0 0 30 0,clip,scale=0.7]{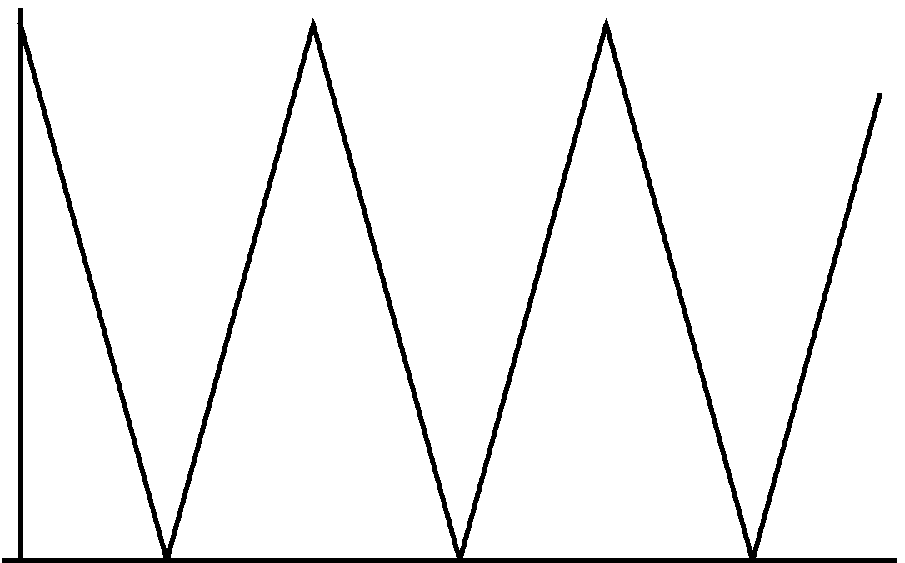}
\put(4,71){\small{$\theta_{c_0,1}$}}
\put(-2,64){\small{$\pi$}}
\put(45,-5){\small{$c_0$}}
\end{overpic}
}
\end{center}
\caption{The angles $\theta_{c_0,\alpha}$ as a function of $c_0$ for $\alpha=0$, $\alpha=0.4$ and $\alpha=1$.}
\label{fig-angulo}
\end{figure}

These numerical results suggest that, due to the invariance of \eqref{LL} under rotations\footnote{In fact, using that
$$
 (M\vec a)\times (M\vec b)=(\det M)M^{-T}(\vec a\times \vec b), \quad \text{ for all }M\in \mathcal{M}_{3,3}(\R), \ \vec a,\vec b\in \R^3,
$$
it is easy to verify that if $\mm(s,t)$ is a solution of \eqref{LL}  with initial condition $\mm^0$, then $\mm_R:=R\mm$ is a solution of \eqref{LL}
with initial condition $\mm^0_R:=R\mm^0$, for any $R\in SO(3)$.
}, for a fixed  $\alpha\in [0,1)$ one can solve the following inverse problem: Given any distinct vectors $\vec A^+,\vec A^-\in \S^2$ there exists $c_0>0$ such that
the associated solution $\mm_{c_0,\alpha}(s,t)$ given by Theorem~\ref{Theorem1} (possibly multiplied by a rotation matrix) provides a solution of \eqref{LL} with initial condition
\bq\label{initial-condition}
 \mm(\cdot,0)=\vec{A}^+\chi_{(0,\infty)}(\cdot)+\vec{A}^{-}\chi_{(-\infty,0)}(\cdot).
\eq
Note that in the case $\alpha=1$ the restriction  $\vec A^+\neq \vec A^-$ can be dropped.

In addition, Figure~\ref{fig-angulo} suggests that  $\vec{A}_{c_0,\alpha}^{+}\neq \vec{A}_{c_0,\alpha}^{-}$  for fixed $\alpha\in[0,1)$ and $c_0>0$. Indeed, notice that $\vec{A}_{c_0,\alpha}^{+}\neq \vec{A}_{c_0,\alpha}^{-}$ if and only if $A_1\neq \pm 1$ or equivalently $\cos \theta_{c_0,\alpha}\neq -1$, that is $\theta_{c_0,\alpha}\neq \pi$, which is true if $\alpha\in [0,1)$ for any $c_0>0$ (See Figure~\ref{fig-angulo}). Notice also that when $\alpha=1$, then the value $\pi$ is attained by different values of $c_0$.

The next natural question is the injectivity of the application $c_0\longrightarrow \theta_{c_0,\alpha}$, for fixed $\alpha$. Precisely, can we generate the same angle using different values of $c_0$?
In the case $\alpha=0$, the plot of $\theta_{c_0,0}$ in Figure~\ref{fig-angulo} shows that the value of $c_0$ is unique, in fact one has following formula  $\sin{(\theta_{c_0,0}/2)}=A_{1,c_0,0}=e^{-\frac{c_0^2}{2}\pi}$ (see~\cite{vega-gutierrez}). In the case $\alpha=1$,
we have $\sin{(\theta_{c_0,1}/2)}= A_{1,c_0,1}=\cos(c_0\sqrt{\pi)}$, moreover
\bq\label{periodico}
\vec A_{c_0,1}^+=\vec A_{c_0+2k\sqrt{\pi},1}^+, \quad\text{ for any }k\in \Z.
\eq
As before, if $\alpha\in(0,1)$ we do not have an analytic answer
and we have to rely on numerical simulations. However, it is difficult to test the uniqueness of $c_0$ numerically. Using the command
{\verb FindRoot } in Mathematica, we have found such values. For instance, for $\alpha=0.4$, we obtain that
$c_0\approx 2.1749$ and  $c_0\approx 6.6263$ give the same value of $\vec A_{c_0,0.4}^+$. The respective profiles $\m_{c_0,0.4}(\cdot)$ are shown
in Figure~\ref{profiles-m}. This multiplicity of solutions suggests that the Cauchy problem for \eqref{LL}
with initial condition \eqref{initial-condition} is ill-posed, at least for certain values of $c_0$. This interesting problem will be
studied in a forthcoming paper.

\begin{figure}[ht]
\begin{center}
 \subfloat[$\m_{c_0,0.4}(\cdot)$, with $c_0\approx 2.1749$]
{\begin{overpic}[scale=0.7]{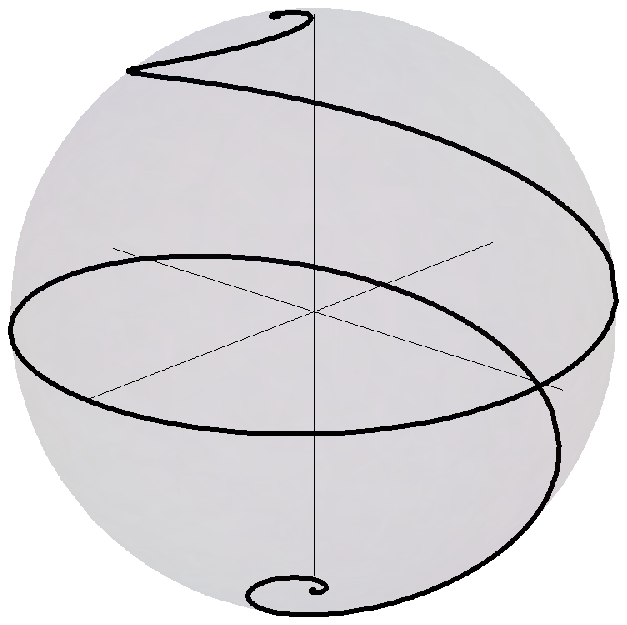}
\put(11,46){\tiny{$m_1$}}
\put(85,47){\tiny{$m_2$}}
\put(48,92){\tiny{$m_3$}}
\end{overpic}
}
 \subfloat[$\m_{c_0,0.4}(\cdot)$, with $c_0\approx 6.6263$]
{
\begin{overpic}[scale=0.7]{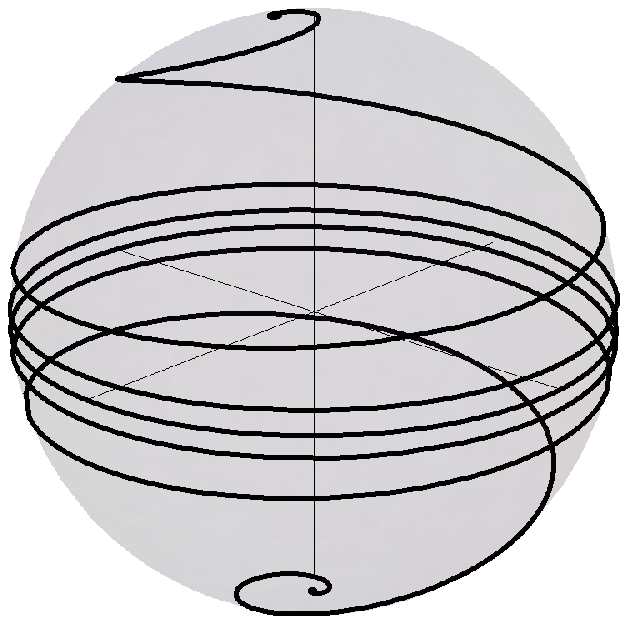}
\put(11,46){\tiny{$m_1$}}
\put(85,47){\tiny{$m_2$}}
\put(48,92){\tiny{$m_3$}}
\end{overpic}
}
\end{center}
\caption{Two profiles $\m_{c_0,0.4}(\cdot)$, with the same limit vector $\vec A_{c_0,0.4}^+$.}
\label{profiles-m}
\end{figure}
%
%
The rest of this section is devoted to give some numerical results on the behaviour of the limiting vector  $\vec{A}^{+}_{c_0,\alpha}$.
In particular, the results below aim to complement those established in Theorem~\ref{thm-c-0} on the behaviour of $\vec A^{+}_{c_0,\alpha}$ for small values of $c_0$, when  $\alpha$ is fixed.

We start recalling what it is known in the extremes cases $\alpha=0$ and $\alpha=1$. Precisely, if $\alpha=0$, the explicit formulae \eqref{A1-alpha-0}--\eqref{A3-alpha-0} for $\vec A^+_{c_0,0}$ allow us to prove that
\begin{equation}\label{Im-A3}
\lim_{c_0\to 0^+} A^+_{3,c_0,0}=0 \quad \text{and} \quad \lim_{c_0\to\infty} A^+_{3,c_0,1}=1,
\end{equation}
and also that $\{A^+_{3,c_0,0} : c_0\in (0,\infty)\}=(0,1)$. When $\alpha=1$ the picture is completely different.
 In fact $A^+_{3,c_0,1}=0$  for all $c_0> 0$, and the limit vectors remain in the equator plane $\S^1\times\{0\}$.
The natural question is what happens with $\vec A^+_{c_0,\alpha}$ when $\alpha\in (0,1)$ as a function of $c_0$.

Although we do not provide a rigorous  answer to this question, in Figure~\ref{fig-A-c} we show some numerical results. Precisely,
Figure~\ref{fig-A-c} depicts the curves $\vec A^+_{c_0,0.01}$, $\vec A^+_{c_0,0.4}$ and
$\vec A^+_{c_0,0.8}$ as functions of $c_0$, for $c_0\in[0,1000]$. We see that the behaviour of $\vec A^+_{c_0,\alpha}$ changes when $\alpha$ increases
in the sense that the first and second coordinates start oscillating more and more as $\alpha$ goes to 1. In all the cases the third component
remains monotonically increasing with $c_0$, but the value of $A^+_{3,1000,\alpha}$ seems to be decreasing with $\alpha$.
At this point it is not clear what the limit value of $A^+_{3,c_0,\alpha}$ as $c_0\to \infty$ is. For this reason, we perform
a more detailed analysis of $A^+_{3,c_0,\alpha}$ and we show the curves $A^+_{3,1,\alpha}$, $A^+_{3,10,\alpha}$, $A^+_{3,1000,\alpha}$ (for fixed $\alpha\in[0,1]$)
in Figure~\ref{fig-A-3-c}. From these results we conjecture that $\{A^+_{3,c_0,\cdot}\}_{c_0>0}$
is a pointwise nondecreasing sequence of functions that converges to $1$ for any $\alpha<1$  as $c_0\to \infty$.
This would imply that, for $\alpha\in(0,1)$ fixed,  $A_{1,c_0,\alpha}\to 0$ as $c_0\to \infty$,
and since  $A_{1,c_0,\alpha}\to 1$ as $c_0\to 0$ (see \eqref{A1}),
we could conclude by continuity (see Theorem~\ref{thm-c-0}) that for any angle $\theta\in (0,\pi)$  there exists $c_0>0$ such that
$\theta$ is the angle between $\vec A^+_{c_0,\alpha}$ and $-\vec A^+_{c_0,\alpha}$ (see \eqref{theta}). This provides an alternative way
to justify the surjectivity of the map $c_0\mapsto  \vec A^{+}_{c_0,\alpha}$ (in the sense explained above).

\begin{figure}[h]  %
\hspace*{-1cm}
\subfloat[$\vec  A^+_{c_0,0.01}$]{
\begin{overpic}[scale=0.7
]{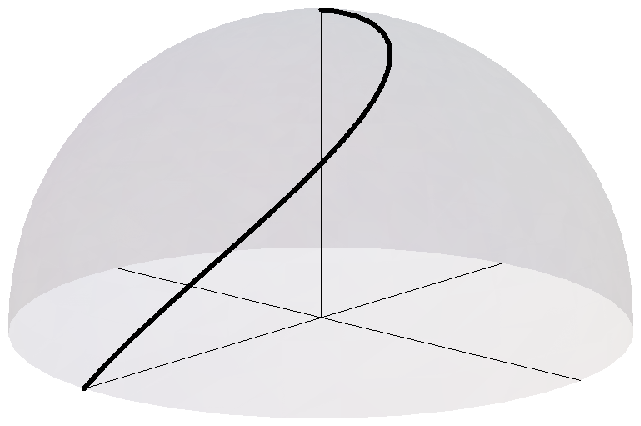}
\put(17,6){\tiny{$A^+_1$}}
\put(82,7){\tiny{$A^+_2$}}
\put(49,55){\tiny{$A^+_3$}}
\end{overpic}
}
\hspace{-1cm}
\subfloat[$\vec  A^+_{c_0,0.4}$]{
\begin{overpic}[scale=0.7]{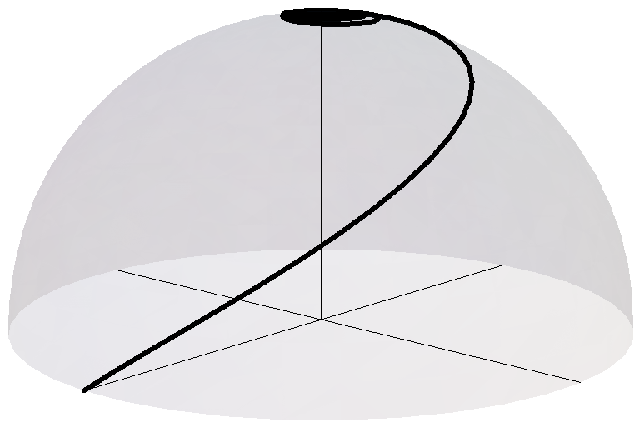}
\put(17,6){\tiny{$A^+_1$}}
\put(82,7){\tiny{$A^+_2$}}
\put(49,55){\tiny{$A^+_3$}}
\end{overpic}
}\hspace*{-1cm}\subfloat[$\vec A^+_{c_0,0.8}$]{
\begin{overpic}[scale=0.7]{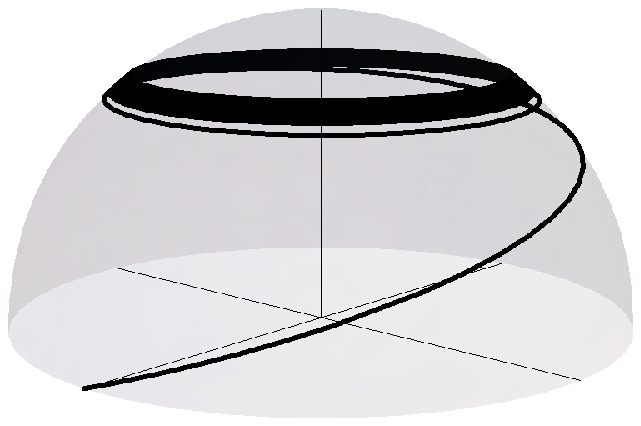}
\put(17,6){\tiny{$A^+_1$}}
\put(82,7){\tiny{$A^+_2$}}
\put(49,55){\tiny{$A^+_3$}}
\end{overpic}
}
\caption{The curves $\vec A^+_{c_0,0.01}$, $\vec A^+_{c_0,0.4}$ and
$\vec A^+_{c_0,0.8}$ as functions of $c_0$, for $c_0\in[0,1000]$.}
\label{fig-A-c}
\end{figure}

\begin{figure}[h]
\begin{center}
\begin{overpic}[scale=0.7]{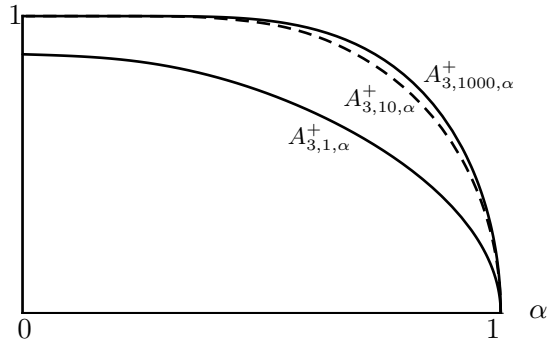}
\put(-1,-5){0}
\put(-3,60){1}
\put(96,-5){1}
\put(105,-1){$\alpha$}
\put(55,35){\footnotesize$A^+_{3,1,\alpha}$}
\put(66.5,43){\footnotesize$A^+_{3,10,\alpha}$}
\put(83,48){\footnotesize$A^+_{3,1000,\alpha}$}
\end{overpic}
\end{center}
\caption{The curves $A^+_{3,1,\alpha}$ ,$A^+_{3,10,\alpha}$, $A^+_{3,1000,\alpha}$ as functions of $\alpha$, for $\alpha\in[0,1]$.}
\label{fig-A-3-c}
\end{figure}

The curves in Figure~\ref{fig-A-3-c} also allow us to discuss further the results in Theorem~\ref{thm-alpha1-2}. In fact, when $\alpha$ is close to 1
the slope of the functions become unbounded and, roughly speaking, the behaviour of $A^+_{3,c_0,\alpha}$ is in agreement with the result in Theorem~\ref{thm-alpha1-2}, that is
$$A^+_{3,c_0,\alpha}\sim C(c_0)\sqrt{1-\alpha}, \quad \text{ as } \alpha\to 1^-.$$

Numerically, the analysis  is more difficult when $\alpha\sim 0$, because the number of computations needed to have an accurate profile of $A^+_{3,c_0,\alpha}$
increases drastically as $\alpha\to 0^+$. In any case, Figure~\ref{fig-A-3-c} suggests that $A^+_{3,c_0,\alpha}$ converges to $A^+_{3,c_0,0}$
 faster than $\sqrt{\alpha}\abs{\ln(\alpha)}$. We think that this rate of convergence can be improved to $\alpha\abs{\ln(\alpha)}$. In fact, in the proof of
 Lemma~\ref{lemma-z-alpha} we only used energy estimates. Probably,
taking into account the oscillations in equation \eqref{f-alpha} (as did in Proposition~\ref{prop-asymp}), it would be possible to establish the  necessary estimates
 to prove the following conjecture:
$$\abs{\vec A^+_{c_0,\alpha}-\vec A^+_{c_0,0}}\leq C(c_0)\alpha\abs{\ln(\alpha)},\quad \text{ for }\alpha\in(0,1/2].$$

\section{Appendix}\label{appendix}
%
\renewcommand\thethm{\thesection.\arabic{thm}}

In this appendix we show how to compute explicitly the solution $\mm_{c_0,\alpha}(s,t)$ of the LLG equation in the case $\alpha=1$.
As a consequence, we will obtain an explicit formula for the limiting vector $\vec A^+_{c_0,1}$ and the other constants appearing in the asymptotics of the associated profile established in Theorem~\ref{thm-conver} in terms of the parameter $c_0$ in the case when $\alpha=1$.

We start by recalling that if $\alpha=1$ then $\beta=0$. We need to find the solution $\{\m,\n,\b\}$ of the Serret--Frenet system \eqref{serret} with
$c(s)=c_0 e^{-s^2/4}$, $\tau\equiv 0$ and the initial conditions  \eqref{IC}.
Hence, it is immediate that
\bqq
m_3=n_3\equiv 0, \quad  b_1=b_2\equiv 0 \quad \text{ and } \quad b_3\equiv 1.
\eqq
 To compute the other components, we use the
Riccati equation \eqref{eq-eta} satisfied by the stereographic projection of $\{m_j, n_j, b_j   \}$
\bq\label{eta-j}
\eta_j=\frac{n_j+ib_j}{1+m_j}, \qquad {\hbox{for}}\qquad j\in \{ 1,2\},
\eq
found in the proof of Lemma~\ref{def-f}. For the values of curvature and torsion $c(s)= c_0e^{-s^2/4}$ and $\tau(s)=0$ the Riccati equation \eqref{eq-eta} reads
\begin{equation}\label{eq-eta-1}
\eta_j'+\frac{i\beta s}{2} \eta_j+\frac{c_0}2e^{-\alpha s^2/4}(\eta_j^2+1)=0.
\end{equation}
We see that when $\alpha=1$, and thus $\beta=0$, \eqref{eq-eta-1} is a separable equation that we write as:
\bqq
\frac{d\eta_j}{\eta_j^2+1}=-\frac{c_0}{2}e^{-\alpha s^2/4},
\eqq
so integrating, we get
\bq\label{eta-tan}
\eta_j(s)=\tan\left( \arctan(\eta_j(0))-\frac{c_0}{2} \Erf(s)\right),
\eq
where $\Erf(s)$ is the non-normalized error function
$$
 \Erf(s)=\int_0^s e^{-\sigma^2/4}\, d\sigma.
$$
Also, using \eqref{IC} and \eqref{eta-j} we get the initial conditions $\eta_1(0)=0$ and $\eta_2(0)=1$. In particular,  if $c_0$ is small
\eqref{eta-tan} is the global solution of the Riccati equation, but it blows-up in finite time if $c_0$ is large. As long as $\eta_j$ is well-defined,
by Lemma~\ref{def-f},
\begin{align*}
f_j(s)&=e^{\frac{c_0}2\int_0^s e^{-\alpha \sigma^2/4}\eta_j(\sigma)\,d\sigma}.
\end{align*}
The change of variables $$\mu=\arctan(\eta_j(0))-\frac{c_0}{2} \Erf(s)$$ yields
\bqq
\int_0^s e^{-\alpha \sigma^2/4}\eta_j(\sigma)\,d\sigma=
\frac{2}{c_0}\ln\left|\frac{\cos\left(\arctan(\eta_j(0))-\frac{c_0}{2}\Erf(s)\right)}{\cos(\arctan(\eta_j(0)))}
\right|,
\eqq
and after some simplifications, we obtain
\bqq
f_1(s)=\left|\cos\left(\frac{c_0}{2}\Erf(s)\right)\right| \ \text{ and }\ f_2(s)=\left|\cos\left(\frac{c_0}{2}\Erf(s)\right)+\sin\left(\frac{c_0}{2}\Erf(s)\right)\right|.
\eqq
In view of \eqref{m-1} and \eqref{m-2}, we conclude that
\bq\label{m-expl}
m_1(s)=2\abs{f_1(s)}^2-1=\cos\left(c_0\Erf(s)\right) \ \text{ and }\ m_2(s)=\abs{f_2(s)}^2-1=\sin\left(c_0\Erf(s)\right).
\eq
A priori, the formulae in \eqref{m-expl} are valid only as long as $\eta$ is well-defined, but a simple verification show that these are the global
solutions of \eqref{serret}, with
\bqq
n_1(s)=-\sin\left(c_0\Erf(s)\right) \quad \text{ and }\quad n_2(s)=\cos\left(c_0\Erf(s)\right).
\eqq
In conclusion, we have proved the following:

\begin{prop}\label{prop-alpha-1} Let $\alpha=1$, and thus $\beta=0$. Then, the trihedron
 $\{ \m_{c_0,1},\n_{c_0,1},\b_{c_0,1}\}$ solution of \eqref{serret}--\eqref{IC} is given by
\begin{align*}
\m_{c_0,1}(s)&=(\cos(c_0\Erf(s)), \sin(c_0\Erf(s)),0),\\
\n_{c_0,1}(s)&=-(\sin(c_0\Erf(s)), \cos(c_0\Erf(s)), 0),\\
\b_{c_0,1}(s)&=(0,0,1),
\end{align*}
for all $s\in \R$.
In particular, the limiting vectors $\vec A^+_{c_0,1}$ and $ \vec A^-_{c_0,1}$ in Theorem~\ref{thm-conver} are given in terms of $c_0$ as follows:
$$\vec A^\pm_{c_0,1}=(\cos(c_0\sqrt\pi),\pm \sin(c_0\sqrt\pi),0 ).$$
\end{prop}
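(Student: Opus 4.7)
The plan is to exploit the fact that when $\alpha=1$ we have $\beta=0$, so the torsion $\tau\equiv 0$, and the curve associated to $\m_{c_0,1}$ is planar. First I would observe directly from the Serret--Frenet system \eqref{serret} with $\tau=0$ that $\b'\equiv 0$, hence by the initial condition \eqref{IC} we get $\b_{c_0,1}(s)\equiv(0,0,1)$ for all $s\in\R$. Dotting $\m$ and $\n$ with $\b$ and using orthogonality, it follows that the third components $m_3$ and $n_3$ are identically zero, so the problem reduces to finding the first two components of $\m$ and $\n$.

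Next I would integrate the reduced planar system. The cleanest route is via the Riccati reduction from Lemma~\ref{def-f}: with $\tau=0$ and $\alpha=1$ the equation \eqref{eq-eta} for the stereographic projection $\eta_j=(n_j+ib_j)/(1+m_j)$ becomes the separable real ODE
\begin{equation*}
\frac{d\eta_j}{\eta_j^2+1}=-\frac{c_0}{2}e^{-s^2/4}\,ds,
\end{equation*}
which integrates to $\eta_j(s)=\tan\bigl(\arctan(\eta_j(0))-\tfrac{c_0}{2}\Erf(s)\bigr)$, where $\Erf(s)=\int_0^s e^{-\sigma^2/4}\,d\sigma$. Plugging the initial values $\eta_1(0)=0$ and $\eta_2(0)=1$ coming from \eqref{IC} into the expression $f_j(s)=\exp\bigl(\tfrac{1}{2}\int_0^s c(\sigma)\eta_j(\sigma)\,d\sigma\bigr)$ from Lemma~\ref{def-f}, a change of variables $\mu=\arctan(\eta_j(0))-\tfrac{c_0}{2}\Erf(s)$ turns the logarithmic integral into $\ln|\cos(\mu)/\cos(\arctan\eta_j(0))|$, yielding the closed forms for $|f_1|^2$ and $|f_2|^2$ displayed just above the proposition. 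The formulae \eqref{m-1}--\eqref{m-2} then give $m_1=\cos(c_0\Erf(s))$ and $m_2=\sin(c_0\Erf(s))$, and $\n$ follows by differentiation of $\m$ using $\m'=c\n$.

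The mild subtlety --- and probably the only point needing care --- is that $\eta_j$ can blow up when $\tfrac{c_0}{2}\Erf(s)$ crosses $\pi/2+k\pi$, which happens for sufficiently large $c_0$; so the Riccati formula is only a local device. I would handle this by defining the candidate trihedron by the closed formulas in the statement, verifying directly that these smooth functions satisfy the Serret--Frenet system \eqref{serret} with $c(s)=c_0e^{-s^2/4}$, $\tau=0$ and the initial conditions \eqref{IC}, and invoking uniqueness of the ODE system to conclude that this is the (global) solution regardless of the possible blow-ups of $\eta_j$. Finally the limits $\vec{A}^{\pm}_{c_0,1}$ are obtained from $\Erf(\pm\infty)=\pm\sqrt{\pi}$, giving $(\cos(c_0\sqrt{\pi}),\pm\sin(c_0\sqrt{\pi}),0)$, consistently with part \ref{converge} of Theorem~\ref{Theorem1}.
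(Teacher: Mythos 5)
Your proposal is correct and follows essentially the same route as the paper's appendix: the Riccati equation \eqref{eq-eta} becomes separable when $\beta=0$, yields $\eta_j(s)=\tan\bigl(\arctan(\eta_j(0))-\tfrac{c_0}{2}\Erf(s)\bigr)$, and the same change of variables recovers $f_j$ and hence $m_1=\cos(c_0\Erf(s))$, $m_2=\sin(c_0\Erf(s))$ via \eqref{m-1}--\eqref{m-2}. Your handling of the possible blow-up of $\eta_j$ for large $c_0$ (verify the closed-form trihedron solves \eqref{serret} globally and invoke uniqueness) is exactly how the paper resolves that point as well.
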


Proposition~\ref{prop-alpha-1} allows us to give an alternative explicit proof of Theorem~\ref{thm-conver} when $\alpha=1$.
\begin{cor}\label{prop-alpha-1-bis}{{\it{[Explicit asymptotics when $\alpha=1$]}}} With the same notation as in Proposition~\ref{prop-alpha-1}, the following asymptotics for $\{ \m_{c_0,1}, \n_{c_0,1}, \b_{c_0,1}  \}$ holds true:
\begin{align*}
\m_{c_0,1}(s)&=\vec A^+_{c_0,1}-\frac{2c_0}{s}\vec B^+_{c_0,1}e^{- s^2/4}\sin(\vec a)-\frac{2c_0^2}{s^2} \vec A^+_{c_0,1}e^{-s^2/2} +O\left(\frac{e^{-s^2/4}}{s^3}\right),\\
\n_{c_0,1}(s)&=\vec B^+_{c_0,1}\sin(\vec a)+\frac{2c_0}{s}\vec  A^+_{c_0,1} e^{- s^2/4} - \frac{2c_0^2}{s^2}\vec  B^+_{c_0,1}e^{-s^2/2} \sin(\vec  a) + O\left(\frac{e^{-s^2/4}}{ s^3}\right),\\
\b_{c_0,1}(s)&=\vec B^+_{c_0,1}\cos(\vec  a),
\end{align*}
where the vectors $\vec{A}^{+}_{c_0,1}$, $\vec{B}^{+}_{c_0,1}$ and $\vec{a}=(a_j)_{j=1}^{3}$ are given explicitly in terms of $c_0$ by
$$
 \vec A^+_{c_0,1}=(\cos(c_0\sqrt\pi),\sin(c_0\sqrt\pi),0),\qquad \vec B^+_{c_0,1}=(\abs{\sin(c_0\sqrt\pi)},\abs{\cos(c_0\sqrt\pi)},1),
$$
\bqq
  a_1=\begin{cases}
\frac{3\pi}{2}, & \text{ if }\ \sin(c_0\sqrt{\pi})\geq0,\\
\frac{\pi}{2}, & \text{ if }\ \sin(c_0\sqrt{\pi})<0,
\end{cases}
\qquad
 a_2=\begin{cases}
\frac{\pi}{2}, & \text{ if }\ \cos(c_0\sqrt{\pi})\geq 0,\\
\frac{3\pi}{2}, & \text{ if }\ \cos(c_0\sqrt{\pi})<0,
\end{cases}
\qquad {\hbox{and}}\qquad a_3=0.
\eqq
Here, the bounds controlling the error terms depend on $c_0$.
\end{cor}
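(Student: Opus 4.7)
The proof will proceed by directly expanding the explicit formulas of Proposition~\ref{prop-alpha-1}, so the only real work is an asymptotic expansion of the error function. First I would write
\[
\Erf(s)=\sqrt{\pi}-\int_s^\infty e^{-\sigma^2/4}\,d\sigma,
\]
and integrate by parts using $e^{-\sigma^2/4}=-\tfrac{2}{\sigma}\tfrac{d}{d\sigma}e^{-\sigma^2/4}$, once to extract the leading term $2 e^{-s^2/4}/s$ and a second time to show that the remainder is $O(e^{-s^2/4}/s^3)$. This gives
\[
c_0\Erf(s)=c_0\sqrt{\pi}-\varepsilon(s),\qquad \varepsilon(s)=\frac{2c_0}{s}e^{-s^2/4}+O\!\left(\frac{e^{-s^2/4}}{s^3}\right).
\]

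Next I would substitute into the formulas of Proposition~\ref{prop-alpha-1} and use the second-order Taylor expansions
\[
\cos(\theta_0-\varepsilon)=\cos\theta_0+\varepsilon\sin\theta_0-\tfrac{\varepsilon^2}{2}\cos\theta_0+O(\varepsilon^3),\quad
\sin(\theta_0-\varepsilon)=\sin\theta_0-\varepsilon\cos\theta_0-\tfrac{\varepsilon^2}{2}\sin\theta_0+O(\varepsilon^3),
\]
with $\theta_0=c_0\sqrt\pi$. Because $\varepsilon=O(e^{-s^2/4}/s)$ and $\varepsilon^2=\tfrac{4c_0^2}{s^2}e^{-s^2/2}+O(e^{-s^2/2}/s^4)$, the higher-order terms and the contribution of the $O(e^{-s^2/4}/s^3)$ part of $\varepsilon$ are all absorbed into $O(e^{-s^2/4}/s^3)$. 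This immediately produces
\begin{align*}
m_{1,c_0,1}(s)&=\cos(c_0\sqrt\pi)+\tfrac{2c_0}{s}e^{-s^2/4}\sin(c_0\sqrt\pi)-\tfrac{2c_0^2}{s^2}e^{-s^2/2}\cos(c_0\sqrt\pi)+O\!\left(\tfrac{e^{-s^2/4}}{s^3}\right),\\
m_{2,c_0,1}(s)&=\sin(c_0\sqrt\pi)-\tfrac{2c_0}{s}e^{-s^2/4}\cos(c_0\sqrt\pi)-\tfrac{2c_0^2}{s^2}e^{-s^2/2}\sin(c_0\sqrt\pi)+O\!\left(\tfrac{e^{-s^2/4}}{s^3}\right),
\end{align*}
and analogous expansions for the (nonzero) components of $\n_{c_0,1}$, while $m_{3,c_0,1}=n_{3,c_0,1}=0$ and $\b_{c_0,1}\equiv(0,0,1)$ are handled trivially.

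The only remaining bookkeeping is to match the above with the target template. Setting $\vec A^+_{c_0,1}=(\cos(c_0\sqrt\pi),\sin(c_0\sqrt\pi),0)$ is forced by the leading term of $\m_{c_0,1}$, and the coefficient of $-\tfrac{2c_0^2}{s^2}e^{-s^2/2}$ in each $m_j$ is then automatically $A_{j,c_0,1}^+$. The trickier point — the step I would flag as the main (though still mild) obstacle — is to reconcile the linear-in-$1/s$ term with the template $-\tfrac{2c_0}{s}\vec B^+_{c_0,1}e^{-s^2/4}\sin(\vec a)$ given that $\vec B^+_{c_0,1}$ is defined with absolute values. Imposing $B_j^+\sin(a_j)$ equal to the required signed quantity forces $\sin(a_j)\in\{-1,+1\}$, whence $a_1,a_2\in\{\pi/2,3\pi/2\}$, and a quick case split on the signs of $\sin(c_0\sqrt\pi)$ and $\cos(c_0\sqrt\pi)$ yields exactly the stated values of $a_1,a_2$; since the third component of $\m_{c_0,1}$ vanishes identically, we may choose $a_3=0$.

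Finally, I would verify consistency with the $\n_{c_0,1}$ expansion: the same $a_j$ extracted from $\m_{c_0,1}$ must reproduce the leading term $\vec B^+_{c_0,1}\sin(\vec a)$ of $\n_{c_0,1}$ and the middle term $\tfrac{2c_0}{s}\vec A^+_{c_0,1}e^{-s^2/4}$, as well as the $1/s^2$ correction; this is a direct comparison after Taylor expansion of $\sin(c_0\Erf(s))$ and $\cos(c_0\Erf(s))$. For $\b_{c_0,1}\equiv(0,0,1)$, the identity $\cos(a_j)=0$ for $j=1,2$ and $\cos(a_3)=1$ together with $B^+_{3,c_0,1}=1$ gives $\vec B^+_{c_0,1}\cos(\vec a)=(0,0,1)$, completing the proof. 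The error bounds depend only on $c_0$ since no $\alpha$-dependent constants appear anywhere in the expansion.
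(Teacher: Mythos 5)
Your proposal is correct and follows essentially the same route as the paper's proof: both start from the explicit formulas of Proposition~\ref{prop-alpha-1}, expand $c_0\Erfc(s)$ (your $\varepsilon(s)$) by integration by parts, Taylor-expand the sine and cosine, and then identify $\vec A^+_{c_0,1}$, $\vec B^+_{c_0,1}$ and the phases $a_j$ via the sign case-split. The paper simply records the expansions of $\sin(c_0\Erfc(s))$ and $\cos(c_0\Erfc(s))$ and leaves the final matching implicit, so your explicit handling of the absolute values in $\vec B^+_{c_0,1}$ is, if anything, slightly more detailed than the original.
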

\begin{proof}
By Proposition~\ref{prop-alpha-1},
\begin{eqnarray}\label{mnb-alpha-1}
\left\{
\begin{array}{l}
\m_{c_0,1}(s)=(\cos(c_0\sqrt{\pi}-c_0\Erfc(s)), \sin(c_0\sqrt{\pi}-c_0\Erfc(s)),0),\\
\n_{c_0,1}(s)=-(\sin(c_0\sqrt{\pi}-c_0\Erfc(s)), \cos(c_0\sqrt{\pi}-c_0\Erfc(s)), 0),\\
\b_{c_0,1}(s)=(0,0,1),
\end{array}
\right.
\end{eqnarray}
where the complementary error function is given by
$$\Erfc(s)=\int_s^\infty e^{-\sigma^2/4}\, d\sigma=\sqrt{\pi}-\Erf(s).$$
It is simple to check that
\begin{align*}
\sin(c_0\Erfc(s))&=e^{-s^2/4}\left(\frac{2c_0}{s}-\frac{4c_0}{s^3}+\frac{24c_0}{s^5}+O\left(\frac{c_0}{s^7}\right)\right),\\
\cos(c_0\Erfc(s))&=1+e^{-s^2/2}\left(-\frac{2c_0^2}{s^2}+\frac{8c_0^2}{s^4}-\frac{56c_0^2}{s^6}+O\left(\frac{c_0^2}{s^8}\right)\right),
\end{align*}
so that, using \eqref{mnb-alpha-1}, we obtain that
\begin{align*}\
 m_1(s)=n_2(s)=\cos(c_0\sqrt{\pi})+\frac{2c_0 }{s}e^{-s^2/4}\sin(c_0\sqrt{\pi})-\frac{2c_0^2 }{s^2}e^{-s^2/2}\cos(c_0\sqrt{\pi})+O\left(\frac{e^{-s^2/4}}{s^3}\right),\\
 m_2(s)=-n_1(s)=\sin(c_0\sqrt{\pi})-\frac{2c_0 }{s}e^{-s^2/4}\cos(c_0\sqrt{\pi})-\frac{2c_0^2 }{s^2}e^{-s^2/2}\sin(c_0\sqrt{\pi})+O\left(\frac{e^{-s^2/4}}{s^3}\right).
\end{align*}
The conclusion follows from the definitions of $\vec A^+_{c_0,1}$, $\vec B^+_{c_0,1}$ and $\vec a$.
\end{proof}

\begin{remark}
Notice that $\vec a$ is not a continuous function of $c_0$, but the vectors $(B_j^+\sin(a_j))_{j=1}^3$  and  $(B_j^+\cos(a_j))_{j=1}^3$ are.
\end{remark}

\bibliography{ref}

\begin{thebibliography}{10}

\bibitem{abram}
M.~Abramowitz and I.~A. Stegun.
\newblock {\em Handbook of mathematical functions with formulas, graphs, and
  mathematical tables}, volume~55 of {\em National Bureau of Standards Applied
  Mathematics Series}.
\newblock For sale by the Superintendent of Documents, U.S. Government Printing
  Office, Washington, D.C., 1964.

\bibitem{banica-vega1}
V.~Banica and L.~Vega.
\newblock On the {D}irac delta as initial condition for nonlinear
  {S}chr\"odinger equations.
\newblock {\em Ann. Inst. H. Poincar\'e Anal. Non Lin\'eaire}, 25(4):697--711,
  2008.

\bibitem{brezis-mon}
H.~Br{\'e}zis.
\newblock {\em Op\'erateurs maximaux monotones et semi-groupes de contractions
  dans les espaces de {H}ilbert}.
\newblock North-Holland Publishing Co., Amsterdam, 1973.
\newblock North-Holland Mathematics Studies, No. 5. Notas de Matem{\'a}tica
  (50).

\bibitem{buttke}
T.~F. Buttke.
\newblock A numerical study of superfluid turbulence in the self-induction
  approximation.
\newblock {\em Journal of Computational Physics}, 76(2):301--326, 1988.

\bibitem{coddington}
E.~A. Coddington and N.~Levinson.
\newblock {\em Theory of ordinary differential equations}.
\newblock McGraw-Hill Book Company, Inc., New York-Toronto-London, 1955.

\bibitem{daniel-lak1}
M.~Daniel and M.~Lakshmanan.
\newblock Soliton damping and energy loss in the classical continuum heisenberg
  spin chain.
\newblock {\em Physical Review B}, 24(11):6751--6754, 1981.

\bibitem{daniel-lak}
M.~Daniel and M.~Lakshmanan.
\newblock Perturbation of solitons in the classical continuum isotropic
  {H}eisenberg spin system.
\newblock {\em Physica A: Statistical Mechanics and its Applications},
  120(1):125--152, 1983.

\bibitem{darboux}
G.~Darboux.
\newblock {\em Le\c cons sur la th\'eorie g\'en\'erale des surfaces. {I},
  {II}}.
\newblock Les Grands Classiques Gauthier-Villars. [Gauthier-Villars Great
  Classics]. \'Editions Jacques Gabay, Sceaux, 1993.
\newblock G{\'e}n{\'e}ralit{\'e}s. Coordonn{\'e}es curvilignes. Surfaces
  minima. [Generalities. Curvilinear coordinates. Minimum surfaces], Les
  congruences et les {\'e}quations lin{\'e}aires aux d{\'e}riv{\'e}es
  partielles. Les lignes trac{\'e}es sur les surfaces. [Congruences and linear
  partial differential equations. Lines traced on surfaces], Reprint of the
  second (1914) edition (I) and the second (1915) edition (II), Cours de
  G{\'e}om{\'e}trie de la Facult{\'e} des Sciences. [Course on Geometry of the
  Faculty of Science].

\bibitem{delahoz}
F.~de~la Hoz, C.~J. Garc{\'{\i}}a-Cervera, and L.~Vega.
\newblock A numerical study of the self-similar solutions of the
  {S}chr\"odinger map.
\newblock {\em SIAM J. Appl. Math.}, 70(4):1047--1077, 2009.

\bibitem{germain-rupflin}
P.~Germain and M.~Rupflin.
\newblock Selfsimilar expanders of the harmonic map flow.
\newblock {\em Ann. Inst. H. Poincar\'e Anal. Non Lin\'eaire}, 28(5):743--773,
  2011.

\bibitem{gilbert}
T.~L. Gilbert.
\newblock A lagrangian formulation of the gyromagnetic equation of the
  magnetization field.
\newblock {\em Phys. Rev.}, 100:1243, 1955.

\bibitem{grunrock}
A.~Gr{\"u}nrock.
\newblock Bi- and trilinear {S}chr\"odinger estimates in one space dimension
  with applications to cubic {NLS} and {DNLS}.
\newblock {\em Int. Math. Res. Not.}, (41):2525--2558, 2005.

\bibitem{guan}
M.~Guan, S.~Gustafson, K.~Kang, and T.-P. Tsai.
\newblock Global questions for map evolution equations.
\newblock In {\em Singularities in {PDE} and the calculus of variations},
  volume~44 of {\em CRM Proc. Lecture Notes}, pages 61--74. Amer. Math. Soc.,
  Providence, RI, 2008.

\bibitem{guo}
B.~Guo and S.~Ding.
\newblock {\em Landau-{L}ifshitz equations}, volume~1 of {\em Frontiers of
  Research with the Chinese Academy of Sciences}.
\newblock World Scientific Publishing Co. Pte. Ltd., Hackensack, NJ, 2008.

\bibitem{vega-gutierrez}
S.~Guti{\'e}rrez, J.~Rivas, and L.~Vega.
\newblock Formation of singularities and self-similar vortex motion under the
  localized induction approximation.
\newblock {\em Comm. Partial Differential Equations}, 28(5-6):927--968, 2003.

\bibitem{vega-gutierrez1}
S.~Guti{\'e}rrez and L.~Vega.
\newblock Self-similar solutions of the localized induction approximation:
  singularity formation.
\newblock {\em Nonlinearity}, 17:2091--2136, 2004.

\bibitem{hartman}
P.~Hartman.
\newblock {\em Ordinary differential equations}.
\newblock John Wiley \& Sons Inc., New York, 1964.

\bibitem{hasimoto}
H.~Hasimoto.
\newblock A soliton on a vortex filament.
\newblock {\em J. Fluid Mech}, 51(3):477--485, 1972.

\bibitem{hubert}
A.~Hubert and R.~Sch\"afer.
\newblock {\em Magnetic domains: the analysis of magnetic microstructures}.
\newblock Springer, 1998.

\bibitem{kosevich}
A.~M. Kosevich, B.~A. Ivanov, and A.~S. Kovalev.
\newblock Magnetic solitons.
\newblock {\em Physics Reports}, 194(3-4):117--238, 1990.

\bibitem{lakshmanan}
M.~Lakshmanan.
\newblock The fascinating world of the {L}andau-{L}ifshitz-{G}ilbert equation:
  an overview.
\newblock {\em Philos. Trans. R. Soc. Lond. Ser. A Math. Phys. Eng. Sci.},
  369(1939):1280--1300, 2011.

\bibitem{lakshmanan0}
M.~Lakshmanan, T.~W. Ruijgrok, and C.~Thompson.
\newblock On the dynamics of a continuum spin system.
\newblock {\em Physica A: Statistical Mechanics and its Applications},
  84(3):577--590, 1976.

\bibitem{lamb}
G.~L. Lamb, Jr.
\newblock {\em Elements of soliton theory}.
\newblock John Wiley \& Sons Inc., New York, 1980.
\newblock Pure and Applied Mathematics, A Wiley-Interscience Publication.

\bibitem{landaulifshitz}
L.~Landau and E.~Lifshitz.
\newblock On the theory of the dispersion of magnetic permeability in
  ferromagnetic bodies.
\newblock {\em Phys. Z. Sowjetunion}, 8:153--169, 1935.

\bibitem{lin}
F.~Lin and C.~Wang.
\newblock {\em The analysis of harmonic maps and their heat flows}.
\newblock World Scientific Publishing Co. Pte. Ltd., Hackensack, NJ, 2008.

\bibitem{lipniacki}
T.~Lipniacki.
\newblock Shape-preserving solutions for quantum vortex motion under localized
  induction approximation.
\newblock {\em Phys. Fluids}, 15(6):1381--1395, 2003.

\bibitem{steiner}
M.~Steiner, J.~Villain, and C.~Windsor.
\newblock Theoretical and experimental studies on one-dimensional magnetic
  systems.
\newblock {\em Advances in Physics}, 25(2):87--209, 1976.

\bibitem{struik}
D.~J. Struik.
\newblock {\em Lectures on {C}lassical {D}ifferential {G}eometry}.
\newblock Addison-Wesley Press, Inc., Cambridge, Mass., 1950.

\bibitem{vargas-vega}
A.~Vargas and L.~Vega.
\newblock Global wellposedness for 1{D} non-linear {S}chr\"odinger equation for
  data with an infinite {$L^2$} norm.
\newblock {\em J. Math. Pures Appl. (9)}, 80(10):1029--1044, 2001.

\end{thebibliography}
 \bibliographystyle{abbrv}

\end{document}